\documentclass[11pt,xcolor=dvipsnames]{article}
\pdfoutput=1 
\usepackage[utf8x]{inputenc}
\usepackage[english]{babel}
\usepackage{graphicx}           
\usepackage{url}
\usepackage{eurosym}
\usepackage{makeidx}
\usepackage[plain]{algorithm}
\usepackage{algorithmic} 
\usepackage{color} 
\usepackage[margin=3.3cm]{geometry}
\usepackage{setspace}
\usepackage{datetime}
\usepackage{pifont}
\usepackage{tikz}
\usepackage{ulem}
\usepackage{hyperref}
\usepackage{subcaption, nicefrac}
\usepackage{rotating}
\usepackage{amsmath, amsthm,amsfonts,amssymb,latexsym,stmaryrd,mathabx}
\usepackage{multicol}
\usepackage[numbers]{natbib}


\newcommand{\PP}{\mathbb{P}}			
\newcommand{\RR}{\mathbb{R}}			
\newcommand{\NN}{\mathbb{N}}			

\newcommand{\umoins}{{\smash{u^{\raisebox{-1pt}{\scriptsize\scalebox{0.5}{$-$}}}}}}
\newcommand{\Sin}{{\mathbf s}_{\textrm{\tiny\rm in}}}
\newcommand{\klip}{k_{\text{lip}}}

\newcommand{\X}{\mathbb{N}^* \times (0,\bar s_1)}

\newcommand{\dif}{\mathrm{d}}

\renewcommand{\tilde}{\widetilde}
\renewcommand{\bar}{\widebar}

\newcommand{\Smin}{\underline{\mathcal{S}}}
\newcommand{\Smax}{\bar{\mathcal{S}}}
\newcommand{\SminK}{s_K} 
\newcommand{\SmaxK}{S_K} 
\newcommand{\barmu}{\mu(\bar s_1)}

\newcommand{\ens}[2]{\left[#1,#2\right]}
\newcommand{\ensco}[2]{\left[#1,#2\right)}

\newcommand{\phit}{\phi_\textsf{t}^{-1}} 
\newcommand{\phis}{\phi_{\textsf{s}_0}^{-1}} 
\newcommand{\lmax}{{L_K}}
\newcommand{\setunL}{\llbracket 1, L\rrbracket}
\newcommand{\setunlmax}{\llbracket 1, \lmax\rrbracket}

\newtheorem{theorem}      {Theorem}[section]
\newtheorem{theorem*}     {theorem}
\newtheorem{proposition}  [theorem]{Proposition}

\newtheorem{lemma}        [theorem]{Lemma}

\newtheorem{remark}       [theorem]{Remark}

\newtheorem{corollary}    [theorem]{Corollary}

\newtheorem{hypothesis}   [theorem]{Assumption}

\newcommand{\Text}{T_{\text{Ext}}}
\newcommand{\1}{\mathbf{1}} 

\begin{document}

\title{Quasi-stationary behavior for an hybrid model of chemostat: the Crump-Young model}
\author{Bertrand Cloez$^{1}$ \and Coralie Fritsch$^{2}$}
\date{}

\footnotetext[1]{MISTEA, Université Montpellier, INRAE, Institut Agro, Montpellier, France}
\footnotetext[2]{Universit\'e de Lorraine, CNRS, Inria, IECL, UMR 7502, F-54000 Nancy, France}

\maketitle

\begin{abstract}
The Crump-Young model consists of two fully coupled stochastic processes modeling the substrate and micro-organisms dynamics in a chemostat.  Substrate evolves following an ordinary differential equation whose coefficients depend of micro-organisms number. Micro-organisms are modeled though a pure jump process whose the jump rates depend on the substrate concentration.  

It goes to extinction almost-surely in the sense that micro-organism population vanishes. In this work, we show that, conditionally on the non-extinction, its distribution  converges exponentially fast to a quasi-stationary distribution.

Due to the deterministic part, the dynamics of the Crump-Young model is highly degenerated. The proof is then original and consists of technical sharp estimates and new approaches for the quasi-stationary convergence.\\ 

\textbf{Keywords :} Quasi-stationary distribution - Chemostat model - Lyapunov function - Crump-Young model - Piecewise Deterministic Markov Process  (PDMP) - Hybrid model

\tableofcontents
\end{abstract}

\section{Introduction}
\label{sec:intro}

The evolution of bacteria in a bioreactor is usually described by a set of ordinary differential equations derived from a mass balance principle, see \cite{SW95,HLRS}. However, in 1979, Kenny S. Crump and Wan-Shin C. O’Young introduced in \cite{CY79} a piecewise deterministic Markov process, as defined in \cite{D93}, to model such a population.

This model corresponds to two fully coupled processes $(X_t,S_t)_{t\geq 0}$ in which the nutrient concentration $(S_t)_{t\geq 0}$ evolves continuously, through a differential equation, while the bacteria population size $(X_t)_{t\geq 0}$ evolves as a time-continuous càdlàg jump process. More precisely, they are defined by the following mechanisms:

\begin{itemize}
\item \textit{bacterial division}: the process $(X_t)_t$ jumps from $X_t$ to $X_t +1$ at rate $ \mu(S_t) X_t$;
\item \textit{bacterial washout}: the process $(X_t)_t$ jumps from $X_t$ to $X_t -1$ at rate $ D X_t$;
\item \textit{substrate dynamics}: between the jumps of $(X_t)_t$, the continuous dynamics of $(S_t)_t$ are given by the following ordinary differential equation
\begin{align}
\label{eq:substrate}
S'_t = D\,(\Sin-S_t)-k\,\mu(S_t)\, X_t \,,
\end{align}
\end{itemize}
where $\mu:\mathbb{R}_+\to \mathbb{R}_+$ and $D,\,\Sin,\,k>0$ are the specific growth rate, the dilution rate of the chemostat, the input substrate concentration and the (inverse of) yield constant respectively. 

Formally, the generator of this Markov process is the operator $\mathcal{L}$ given by

\begin{align}
\nonumber
\mathcal{L} f(x,s) 
&= \left[D(\Sin-s) - k \mu(s)\,x\right] \partial_s f(x,s) + \mu(s)\, x\, \left(f(x+1,s)-f(x,s)\right)\\
\label{eq:generator}
&\qquad + D \,x\,\left(f(x-1,s)-f(x,s)\right),
\end{align}
for all  $x\in\NN$, $s\geq 0$ and $f\in\mathcal{C}^{0,1}(\mathbb{N}\times \mathbb{R}_+)$, with $\mathcal{C}^{0,1}(\mathbb{N}\times \mathbb{R}_+)$ the space of functions $f:\mathbb{N}\times \mathbb{R}_+ \to \mathbb{R}$ such that for $x\in\mathbb{N}$, $s\mapsto f(x,s)\in \mathcal{C}^{1}(\mathbb{R}_+)$. Since then, several versions have been introduced to complete chemostat modeling as for instance in \cite{CF15,FHC,CJL11, FRRS}. However, despite its simplicity and the number of studies on it ($e.g.$ \cite{CJL11,CMMS,CF17,W16}), the long-time behaviour of this process is not well understood. It is well known that, under good assumptions (and in particular under Assumption~\ref{hyp:mu} below), it becomes extinct almost surely in finite time (see \cite[Theorem~4 and Remark~7]{CF17} and \cite[Theorem~3.1]{CMMS}); namely 
\begin{equation}
\label{eq.def.Text}
\Text:=\inf\left\{t\geq 0 \ | \ X_t=0 \right\}<+\infty \quad \text{a.s.}
\end{equation}
In addition, in \cite{CMMS}, authors proved the existence of quasi-stationary distributions (QSD), that is stationary distributions for the process conditionally on not being extinct (see \eqref{eq:def_QSD} below), as well as some regularity properties of these QSD.
Nevertheless, the long-time behavior of the process before extinction (as defined in \cite{MV12,CMS13,VP13}) was, until now, unknown. In this work, we prove that there exists a unique QSD (existence was proved in \cite{CMMS}, but not uniqueness) as well as the exponentially fast convergence of the process $(X_t,S_t)_{t\geq 0}$ to this QSD.

Convergence to QSD is usually proved though Hilbert techniques \cite{CMS13,CCLMMS,VD91}. However, our process of interest is not reversible making these techniques difficult to deal with. To overcome this problem, we use recent results \cite{CV17,CV20,BCGM,CG20} which are a generalization of usual techniques to prove convergence to  stationary distribution \cite{MT12}. These techniques hold in non-Hilbert space or when existence of the principal eigenvector is not known. A drawback is that sharp estimates are needed on the paths such as bounds on hitting times.  These estimates are often obtained throught irreducibility properties, however proving irreducibility properties for piecewise deterministic processes is an active and difficult subject of research \cite{BLMZ15,BS19,BHS18,C16}. See for instance the surprisingly behavior of some piecewise deterministic Markov processes in \cite{LMR13,BLMZ14}. A main part of our proof is nevertheless based on such result.

However in our setting, the process $(X_t,S_t)_{t\geq 0}$ is not irreducible. Fixing the number of bacteria, the flow associated to the substrate dynamics has a unique equilibrium, which is never reached and is different from the equilibirum with another number of bacteria. This makes even more difficult the hitting time estimates which are fundamental for the QSD existence and convergence.

Finally, even if our model can be seen as very specific, our proof could be mimicked in others contexts and open then the doors for others applications where this type of processes have applications. Among many others, these include  applications in neuroscience \cite{GL15, PTW},  in genomics \cite{G12, H17} or in ecology \cite{C16}. 

\medskip

The paper is organized as follow. We establish our main results in Section~\ref{sec:main.result}: first we state the exponentially fast convergence of the process towards a unique QSD for initial distributions on a given subset of $\mathbb{N^*}\times \mathbb{R}_+$ (Theorem~\ref{th:main-qsd}) then we extend the convergence towards the QSD for any initial condition of the process in $\mathbb{N^*}\times \mathbb{R}_+$ (Corollary~\ref{co:yaglom}).
Section~\ref{se:proof} is devoted to the proof of Theorem~\ref{th:main-qsd}.  We begin by detailing the scheme of our proof establishing sufficient conditions for the convergence. These conditions, proved in Section~\ref{sec:QS.behavior}, are mainly based on hitting time estimates, established in Section~\ref{sect:irre}. 
Section~\ref{sec:proof.corollary} is devoted to the proof of Corollary~\ref{co:yaglom}. 
For a better readability of the main arguments of the proofs, we postpone technical results in two appendices.
The first one establishes bounds and monotony properties of the underlying flow associated to the substrate dynamics as well as some classical properties on the probability of jump events. The second one contains the proof of the above-mentioned hitting time estimates and some properties based on Lyapunov functions bounds.
We remind in a third appendix the useful results of  \cite{BCGM} and \cite{CV20}.

\section{Main results}
\label{sec:main.result}

In all the paper, we will make the following assumption. 

\begin{hypothesis}
\label{hyp:mu}
The specific growth rate $\mu:\mathbb{R}_+ \mapsto \mathbb{R}_+$ satisfies to following properties: $\mu\in\mathcal{C}^1(\mathbb{R_+})$ and is an increasing function such that $\mu(0)=0$ and $\mu(s)>0$ for all $s>0$. 
\end{hypothesis}

Under Assumption~\ref{hyp:mu}, we denote by $\bar s_1\in(0,\Sin)$ the unique solution (see Lemma~\ref{lem:defsbar}) of $D(\Sin-\bar s_1)-k\,\mu(\bar s_1) = 0$. It corresponds to the equilibrium substrate concentration in the chemostat when the bacterial population is constant and contains only one individual.

\medskip

For any distribution $\xi$ on the space $E$, with $E=\mathbb{N}^*\times (0, \bar s_1)$ or $E=\mathbb{N}^*\times \mathbb{R}_+$, and any function $f:E\to\mathbb{R}$, we will denote by $\xi(f)$ the integral of $f$ w.r.t to $\xi$ on $E$, that is $\xi(f):=\int_E f(x,s)\,\xi(\dif x, \dif s)$.

\medskip

Our main result states the existence, uniqueness and exponential convergence to a quasi-stationary distribution (QSD). Recall that a QSD $\pi$, for the process $(X_t,S_t)_t$, is a probability measure on $\mathbb{N}^*\times \mathbb{R}_+$ such that
\begin{equation}
\label{eq:def_QSD}
\mathbb{P}_\pi \left( (X_t,S_t) \in \cdot \ | \ \Text>t\right) = \pi,
\end{equation}
with, for all probability measure $\xi$ on $\mathbb{N}^*\times \mathbb{R}_+$, $\mathbb{P}_\xi (\cdot) = \int_{\mathbb{N}^*\times \mathbb{R}_+} \mathbb{P}_{(x,s)} (\cdot) \xi(\dif x,\dif s)$ where $\mathbb{P}_{(x,s)}$ classically designs the probability conditioned to the event $\{ (X_0,S_0) = (x,s)\}$. The associated expectations are denoted by  $\mathbb{E}_{\xi}$ and $\mathbb{E}_{(x,s)}$.

From \cite[Proposition~2]{MV12} or \cite[Theorem~2.2]{CMS13}, if $\pi$ is a QSD, there exists a positive number $\lambda\geq 0$ such that
\begin{equation}
\label{eq:lambda}
\mathbb{P}_\pi\left(\Text >t\right) = e^{-\lambda t}.
\end{equation}

Roughly, the distribution $\pi$ represents the asymptotic law of $(X_t,S_t)$ before extinction and $1/\lambda$ is the mean of the extinction time.

For $\rho>1$ and $p>0$,  let define for all $(x,s)\in \mathbb{N}^*\times (0,\bar s_1)$
\[
W_{\rho,p}:(x,s) \mapsto \rho^x + \frac{1}{s}+ \frac{1}{(\bar s_1-s)^{p}}\qquad \text{and} \qquad  \psi: (x,s) \mapsto x.
\]

\begin{theorem}
\label{th:main-qsd}
We assume that $\mu(\bar s_1)>D$.  Then there exists a unique QSD $\pi$ on $\mathbb{N}^*\times (0,\bar s_1)$ such that there exist $\rho>1$ and $p\in\left(0,\frac{\mu(\bar s_1)-D}{D+k\,\mu'(\bar s_1)}\right)$ satisfying $\pi(W_{\rho,p})<+\infty$.  Moreover, for all $\rho>1$ and $p\in\left(0,\frac{\mu(\bar s_1)-D}{D+k\,\mu'(\bar s_1)}\right)$, there exists $C,\omega>0$ (depending on $\rho$ and $p$) such that for any starting distribution $\xi$ on  $\mathbb{N}^*\times (0, \bar s_1)$ such that $\xi(W_{\rho,p})<+\infty$,  and for all $t\geq 0$, we have 

\begin{equation}
\label{eq:cv_qsd}
\sup_{\|f \|_\infty \leq 1 } \left|\mathbb{E}_\xi \left[ f(X_t,S_t)  \ | \ \Text>t\right] - \pi(f) \right| \leq C \min\left( \frac{\xi(W_{\rho,p})}{\xi(\psi)}, \frac{\xi(W_{\rho,p})}{\xi(h)} \right) e^{-\omega t}
\end{equation} 
and
\begin{equation}
\label{eq:cv_qsd_X_eq_h}
\sup_{\|f \|_\infty \leq 1 } \left|e^{\lambda \,t}\,\mathbb{E}_\xi\left[f(X_t,S_t)\,\1_{X_t\neq 0}\right]-\xi(h)\,\pi(f)\right|\leq C\,\xi(W_{\rho,p})\,e^{-\omega\,t}\,,
\end{equation}
where $h$ defined for every $(x,s) \in \NN^*\times(0, \bar s_1)$ by
\begin{align}
\label{def:h}
h(x,s) := \lim_{t\to \infty} e^{\lambda t}\mathbb{P}_{(x,s)} (\Text>t)\in (0,+\infty),
\end{align}
is such that $\sup_{\X}h/W_{\rho,p}<\infty$
and where $\lambda$, the eigenvalue associated to $\pi$ ,defined by \eqref{eq:lambda}, satisfies
\begin{align}
\label{eq:eigenvalue.bounds}
0< \lambda \leq D.
\end{align}
\end{theorem}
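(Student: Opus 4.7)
The plan is to apply an abstract exponential convergence theorem for quasi-stationary distributions of Champagnat--Villemonais and Bansaye--Cloez--Gabriel--Marguet type, recalled in the third appendix. In our setting its hypotheses reduce to three ingredients: a Foster--Lyapunov drift inequality with test function $W_{\rho,p}$, a Dobrushin/minorization condition at some fixed time $t_0$ on a compact sublevel set $K\subset\X$, and uniform exponential moments for the hitting time of $K$ starting from any point where $W_{\rho,p}$ is finite. Section~\ref{se:proof} will open by isolating these three conditions, with the main arguments kept in Sections~\ref{sec:QS.behavior} and \ref{sect:irre} and the technical computations postponed to the two technical appendices.

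For the Lyapunov step, $\mathcal{L}W_{\rho,p}$ decomposes into three contributions. The $1/s$ piece is controlled near $s=0$ by the positive contribution $D\Sin$ in the drift \eqref{eq:substrate}. The $\rho^x$ piece is acted on only by the jump part and produces $x\rho^x(\rho-1)[\mu(s)-D/\rho]$, which for $\rho$ sufficiently close to $1$ becomes strongly negative as $x$ grows, thanks to the coupling \eqref{eq:substrate} which pushes $s$ downward (hence $\mu(s)$ small) whenever $x$ is large. The delicate piece is $1/(\bar s_1-s)^p$: for $x=1$ the flow near $\bar s_1$ linearises as $-(D+k\mu'(\bar s_1))(s-\bar s_1)$, and the chain rule then creates a positive contribution of order $p(D+k\mu'(\bar s_1))(\bar s_1-s)^{-p}$ that has to be absorbed by the negative contribution coming from $\rho^x$ through the slack $\mu(\bar s_1)-D>0$ provided by the assumption. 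Balancing these two terms is precisely what produces the announced threshold $p<(\mu(\bar s_1)-D)/(D+k\mu'(\bar s_1))$, and the outcome is a Foster--Lyapunov inequality of the form $\mathcal{L}W_{\rho,p}\le -\alpha W_{\rho,p}+C\,\mathbf{1}_K$ on a compact $K$.

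The second and genuinely harder step is the hitting time and minorization analysis carried out in Section~\ref{sect:irre}. For each fixed $x$ the substrate flow is attracted to a distinct equilibrium $\bar s_x$ which is never reached and which varies with $x$, so no classical irreducibility or strong Feller argument is available: this is the main obstacle of the proof. The plan is to design, for any $(x,s)\in K$, an explicit pathwise scenario on $[0,t_0]$ whose probability is bounded below uniformly on $K$, consisting of letting the deterministic flow push $s$ into a prescribed window, then forcing a prescribed finite sequence of births and washouts whose individual probabilities are uniformly controlled on $K$ by Assumption~\ref{hyp:mu} and compactness, and finally using the smooth dependence of the flow on its initial data to extract a genuine positive lower bound on the density of $S_{t_0}$ inside a ball around a chosen reference point $(x_0,s_0)$. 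This yields the Dobrushin minorization at time $t_0$; iterating the same construction, combined with the Foster--Lyapunov inequality, then furnishes the uniform exponential moments for the return time to $K$ required by the abstract theorem.

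Once these two ingredients are in hand, the abstract theorem produces in one stroke the QSD $\pi$, its uniqueness in the class of probabilities integrating $W_{\rho,p}$, the positive eigenfunction $h$ defined by \eqref{def:h} together with $\sup_{\X}h/W_{\rho,p}<\infty$, and the exponential convergence estimates \eqref{eq:cv_qsd} and \eqref{eq:cv_qsd_X_eq_h}. The eigenvalue bounds in \eqref{eq:eigenvalue.bounds} then follow as easy corollaries: $\lambda>0$ is immediate from the Foster--Lyapunov inequality, which forces exponential extinction tails under $\pi$; and for $\lambda\le D$ I would simply couple $(X_t)$ from below with a pure-death chain of per-capita rate $D$, which is legitimate since births can only make $X_t$ larger, yielding $\mathbb{P}_{(x,s)}(\Text>t)\ge e^{-Dt}$ for every $x\ge 1$ and every $s$. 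Integrating against $\pi$ in \eqref{eq:lambda} then gives $e^{-\lambda t}\ge e^{-Dt}$, i.e. $\lambda\le D$.
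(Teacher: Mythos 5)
Your overall architecture — verify Lyapunov, minorization, and a third condition, then invoke an abstract Harris‑type theorem of Champagnat–Villemonais / Bansaye–Cloez–Gabriel–Marguet type — is exactly the paper's strategy, and your diagnosis of the central difficulty (each fixed $x$ has its own equilibrium $\bar s_x$, killing classical irreducibility) is also correct. But the proposal has a genuine gap in the Lyapunov step, and it also replaces one of the paper's three conditions with a non‑equivalent one.

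The Lyapunov inequality $\mathcal{L}W_{\rho,p}\le -\alpha W_{\rho,p}+C\,\mathbf{1}_K$ fails as stated, because the jump part of $\mathcal{L}$ acting on $\rho^x$ gives $(\rho-1)\bigl(\mu(s)-D/\rho\bigr)x\rho^x$, which is \emph{positive} for $s$ near $\bar s_1$ (since $\mu(\bar s_1)>D\ge D/\rho$), uniformly in $x$; the $\rho^x$ piece therefore cannot absorb the blow‑up of $\mathcal{L}[(\bar s_1-s)^{-p}]$ at $x=1$, $s\to\bar s_1$, as you claim. The paper avoids this by not using $W_{\rho,p}$ itself: it replaces $\rho^x$ with $\rho^x e^{\alpha s}/\log\rho$ so that the flow term $-\alpha\,k\mu(s)x$ cancels the positive jump contribution (this is the role of $\alpha\ge(\rho-1)/k$), and it replaces $(\bar s_1-s)^{-p}$ with $(1+\mathbf{1}_{x\le 1}\theta)(\bar s_1-s)^{-p}$ so that a division from $x=1$ to $x=2$ loses a $\theta/(1+\theta)$ fraction of the singular part; it is this division‑loss term $-\mu(s)\tfrac{\theta}{1+\theta}$, not anything coming from $\rho^x$, that absorbs the $p(D+k\mu'(\bar s_1))$ blow‑up, and the requirement that such a $\theta>0$ exist is precisely what produces the threshold $p<(\mu(\bar s_1)-D)/(D+k\mu'(\bar s_1))$. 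The resulting $V$ is comparable to $W_{\rho,p}$, which is why the theorem can still be stated in terms of $W_{\rho,p}$. Moreover, since the semigroup is sub‑Markovian, the paper needs a \emph{pair} of drift conditions — an upper bound $\mathcal{L}V\le -\eta V+\zeta\psi$ together with a lower bound $\mathcal{L}\psi\ge -D\psi$ for the mass function $\psi(x,s)=x$, and crucially $\eta>D$ — rather than a single $\mathcal{L}V\le -\alpha V+C\mathbf{1}_K$; your single‑function formulation does not supply what Assumptions (A1)–(A2) of \cite{BCGM} require.

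Your third ingredient, uniform exponential moments for the hitting time of $K$, is not what the abstract theorems of \cite{BCGM} and \cite{CV20} ask for. They require the mass‑ratio inequality \eqref{tag:MRI}, i.e.\ $\sup_{(x,s),(y,r)\in K}\sup_{t\ge 0}\mathbb{E}_{(x,s)}[\psi(X_t,S_t)]/\mathbb{E}_{(y,r)}[\psi(X_t,S_t)]<\infty$, which is a genuinely two‑sided, time‑uniform comparison of survival‑weighted masses. The paper does deduce this from hitting‑time estimates (Proposition~\ref{prop.partout.a.partout}), but the argument is delicate: it needs the hitting time to lie in a prescribed window $[\tau-\varepsilon,\tau]$ with probability bounded below, and it needs an extra pass to handle starting points $(y,r)$ with $r$ close to $\bar s_y$, which cannot be hit directly. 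Exponential moments of the return time alone do not give this, and iterating a minorization‑plus‑Lyapunov argument would only control the time to re‑enter $K$, not the ratio of survival probabilities at a common future time. You would need to spell out which abstract statement accepts your exponential‑moment condition and check that it produces all of \eqref{eq:cv_qsd}, \eqref{eq:cv_qsd_X_eq_h} and the two alternative bounds $\xi(W_{\rho,p})/\xi(\psi)$ and $\xi(W_{\rho,p})/\xi(h)$; as written, this step is not closed.

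Two smaller remarks: the parenthetical ``for $\rho$ sufficiently close to $1$'' is unnecessary and in fact contradicts the theorem, which holds for every $\rho>1$ (the paper compensates a large $\rho$ by a large $\alpha$). And your derivation of $\lambda\le D$ by coupling with a pure‑death process and integrating against $\pi$ in \eqref{eq:lambda} is fine and is essentially what the paper does via \eqref{tag.BLF2}.
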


In addition to Theorem~\ref{th:main-qsd},  several properties that we will not detail here but which can be useful in practice (bounds on $h$, spectral properties, definition of the Q-process...) can be deduced from \cite{CV20,BCGM}.  As the main objective of our paper is to give a method to verify that results of \cite{CV20,BCGM} hold for hybrid processes with a continuous component, we do not list these consequences here, but they can be easily founded in \cite{CV20,BCGM}.

Properties developed in \cite{CMMS} hold true for $\pi$ (density of the measure $\pi(x,.)$ w.r.t. the Lebesgue measures, differentiability of the density...).

Moreover, this unique QSD verifies the so-called Yaglom limit on $\mathbb{N^*}\times\mathbb{R}_+$ as stated in the next corollary; that is the dynamics conditioned on the non-extinction still tends to $\pi$ when the starting distribution is a Dirac masses on $\mathbb{N^*}\times\mathbb{R}_+$ .

\begin{corollary}
\label{co:yaglom}
We assume that $\mu(\bar s_1)>D$. Let $\pi$ as defined in Theorem~\ref{th:main-qsd}. For every $(x,s) \in \mathbb{N}^* \times \mathbb{R}_+$ and bounded function $f:\mathbb{N^*}\times \mathbb{R_+}\to \mathbb{R}$, we have
\[
\lim_{t\to \infty} \mathbb{E}_{(x,s)} \left[ f(X_t,S_t)  \ | \ \Text>t\right] = \pi(f).
\]
\end{corollary}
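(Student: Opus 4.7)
The plan is to reduce to Theorem~\ref{th:main-qsd} via a single Markov-property step. When $s_0\in(0,\bar s_1)$, the Dirac measure $\delta_{(x,s_0)}$ satisfies $\delta_{(x,s_0)}(W_{\rho,p})=W_{\rho,p}(x,s_0)<+\infty$ and $\delta_{(x,s_0)}(\psi)=x>0$, so \eqref{eq:cv_qsd} applied with $\xi=\delta_{(x,s_0)}$ immediately yields the desired limit $\pi(f)$. It remains to treat $s_0=0$ and $s_0\geq \bar s_1$.

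For those boundary cases, I fix $t_0>0$ and use the Markov property to write, for $t>t_0$,
\begin{equation*}
\mathbb{E}_{(x,s_0)}[f(X_t,S_t)\,\1_{\Text>t}]
=\mathbb{E}_{(x,s_0)}\bigl[\1_{X_{t_0}\neq 0}\,g_{t-t_0}(X_{t_0},S_{t_0})\bigr],
\end{equation*}
where $g_u(y,\sigma):=\mathbb{E}_{(y,\sigma)}[f(X_u,S_u)\,\1_{X_u\neq 0}]$, and analogously with $f\equiv 1$ for the denominator $\mathbb{P}_{(x,s_0)}(\Text>t)$. The flow analysis of Appendix~A provides the identity $D(\Sin-\bar s_1)-k\mu(\bar s_1)x=k\mu(\bar s_1)(1-x)\leq 0$ for every $x\geq 1$, hence $S'\leq 0$ on the line $\{s=\bar s_1\}$ and the substrate can never cross $\bar s_1$ from below. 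Consequently, for $s_0=0$, the law of $(X_{t_0},S_{t_0})$ on $\{X_{t_0}\neq 0\}$ is entirely supported on $B:=\NN^*\times(0,\bar s_1)$; and for $s_0\geq \bar s_1$, leaving $B^c$ at time $t_0$ requires a birth to have occurred early enough before $t_0$ (so that the resulting negative drift of $S$ brings it strictly below $\bar s_1$), which means the bad event $\{(X_{t_0},S_{t_0})\notin B,\;X_{t_0}\neq 0\}$ has probability $\varepsilon(t_0)$ decaying exponentially in $t_0$.

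Let $\nu_{t_0}$ be the sub-probability measure given by the law of $(X_{t_0},S_{t_0})$ restricted to $B$. The Lyapunov estimates of Appendix~B yield $\nu_{t_0}(W_{\rho,p})<+\infty$, so applying \eqref{eq:cv_qsd_X_eq_h} to $\nu_{t_0}$ (extended to finite measures on $B$ by linearity) gives $e^{\lambda(t-t_0)}\nu_{t_0}(g_{t-t_0})\to \nu_{t_0}(h)\,\pi(f)$ and the same statement with $f\equiv 1$. Dividing the two asymptotics yields $\pi(f)$, as required.

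The main obstacle is the exceptional contribution, present only when $s_0\geq \bar s_1$: one must show that the bad-event term is of strictly smaller order than $\nu_{t_0}(h)\,e^{-\lambda(t-t_0)}$. This reduces to a uniform upper bound $\mathbb{P}_{(1,\sigma)}(\Text>u)\leq C\,e^{-\lambda u}$ for $\sigma\in[\bar s_1,s_0]$, which I would obtain by one further Markov-step reduction to a starting distribution on $B$, combined with a monotone-coupling or direct Lyapunov argument on the compact interval $[\bar s_1,s_0]$. The positivity of $h(x,s_0)$, needed to make the ratio well-defined in the limit, follows from the same Markov-step decomposition since $\nu_{t_0}(h)>0$ by the positivity of $h$ on $B$ (established in Theorem~\ref{th:main-qsd}).
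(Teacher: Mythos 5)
Your overall strategy — reduce to Theorem~\ref{th:main-qsd} via a Markov-property decomposition that treats $s_0\geq\bar s_1$ and $s_0=0$ separately — is the same as the paper's. However there are two genuine gaps at precisely the points you gesture past.

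For $s_0\geq\bar s_1$, the crucial input you leave unestablished is an \emph{exponential moment at rate strictly greater than $\lambda$} for the time the process needs to enter a set of the form $\mathbb{N}^*\times(0,\bar s_1-\epsilon]$. Your decomposition at a \emph{fixed} time $t_0$ leaves a bad-event term $\mathbb{E}_{(x,s_0)}[\1_{\{X_{t_0}\neq 0,\,S_{t_0}\geq\bar s_1\}}\,g_{t-t_0}(X_{t_0},S_{t_0})]$, and you propose to control it via a uniform bound $\mathbb{P}_{(1,\sigma)}(\Text>u)\leq Ce^{-\lambda u}$ on a compact in $\sigma$. That bound is equivalent to bounding $h(1,\cdot)$ above on $[\bar s_1,s_0]$, which is one of the things you are trying to prove — ``one further Markov-step reduction'' would land you back in the same situation (again starting above $\bar s_1$), so the argument is implicitly circular unless you supply the exponential-moment estimate directly. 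The paper does this by introducing the hitting time $T_\epsilon=T_{\mathbb{N}^*\times(0,\bar s_1-\epsilon]}$ and proving (Lemma~\ref{lem:lyap2}) that $\mathbb{E}_{(x,s)}[e^{(D+C)(T_\epsilon\wedge\Text)}]\leq Ae^{\beta s}$ with $D+C>D\geq\lambda$, plus $\mathbb{P}_{(x,s)}(T_\epsilon<\infty)>0$; this is obtained by an \emph{ad hoc} Lyapunov function $g(x,s)=(\1_{x\geq 2}+(1+\delta_1)\1_{x=1}+\delta_0\1_{x=0})e^{\beta s}$ on $\mathbb{N}\times[\bar s_1-\epsilon,\infty)$, which is not one of the Lyapunov estimates stated earlier and which you would need to construct. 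Once that is in hand, the paper's decomposition is along $\{T_\epsilon\wedge\Text\leq t\}$ and $\{T_\epsilon\wedge\Text>t\}$, not a fixed $t_0$, which avoids the iterated-$t_0$ passage your plan requires.

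For $s_0=0$, your claim that ``the Lyapunov estimates of Appendix~B yield $\nu_{t_0}(W_{\rho,p})<+\infty$'' is not correct: those estimates are stated on $\mathbb{N}^*\times(0,\bar s_1)$, and the function $V$ (which controls $W_{\rho,p}$) is infinite at $s=0$, so neither the generator inequality of Lemma~\ref{lem:LV} nor its semigroup consequence (Lemma~\ref{lem:lyapunov}) applies at the initial point $(x,0)$. Establishing $\mathbb{E}_{(x,0)}[1/S_{t_0}]<+\infty$ requires separate work; the paper explicitly remarks it proves this ``by coupling method'' rather than via a Lyapunov function, dominating $X_u$ by a Yule process $Z_u$ on $[0,t_0]$ and deriving a Grönwall lower bound $S_u\geq\frac{D\Sin}{D+k\bar\mu'_t Z_t}(1-e^{-Du})$. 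You also need $\rho$ small enough (e.g. $\rho<(1-e^{-\mu(\bar s_1)t_0})^{-1}$) for $\mathbb{E}_{(x,0)}[\rho^{X_{t_0}}\mid\Text>t_0]<\infty$, which you do not mention. Filling these two holes would essentially reproduce the paper's proof.
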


\begin{remark}
\label{remark:mu.lipschitz}
Assuming that $\mu$ is locally Lipschitz instead of $\mu\in \mathcal{C}^1(\mathbb{R}_+)$ is sufficient to obtain the convergences established in Theorem~\ref{th:main-qsd} and Corollary~\ref{co:yaglom}. The condition $p\in\left(0,\frac{\mu(\bar s_1)-D}{D+k\,\mu'(\bar s_1)}\right)$ then becomes $p\in\left(0,\frac{\mu(\bar s_1)-D}{D+k\,\klip}\right)$ for any local Lipschitz constant $\klip$ in a neighborhood of $\bar s_1$. See the end of Sections~\ref{sec:proof.lem.LV} and \ref{sec:proof.corollary}.
\end{remark}

We will see that the process $(X_t,S_t)_{t\geq 0}$ is not irreducible on $ \mathbb{N}^* \times (0,+\infty)$. In general, such non-irreducible process may have several quasi-stationary distributions and the convergence to them depends on the initial condition of the process; see for instance the \textbf{Bottleneck effect and condition H4} part of \cite[Section 3.1]{BCP}. 
In our setting, we will show, thanks to bounds on Lyapunov functions method, that the convergence holds for any initial distribution on $ \mathbb{N}^* \times (0,+\infty)$ because $\mathbb{N}^* \times (0,\bar s_1)$ is attractive.

\section{Proof of Theorem~\ref{th:main-qsd}}
\label{se:proof}

Similarly to the proof of \cite[Proposition~2.1 and Corollary~3.1]{CMMS},we can show that $\mathbb{N}\times (0,\Sin)$ is an invariant set for $(X_t,S_t)_{t\geq 0}$ and that $\mathbb{N^*}\times (0,\bar s_1)$ is an invariant set for $(X_t,S_t)_{t\geq 0}$ until the extinction time $\Text$. 
Consequently, for any starting distribution $\xi$ on  $\mathbb{N}^*\times (0, \bar s_1)$, the process evolves in $(\X)\cup (\{0\}\times (0,\Sin))$, with $\{0\}\times (0,\Sin)$ the absorbing set corresponding to the extinction of the process.

\medskip

Let fix $\rho>1$ and $p\in\left(0,\frac{\mu(\bar s_1)-D}{D+k\,\mu'(\bar s_1)}\right)$. We will prove that \cite[Theorem 5.1]{BCGM} and \cite[Corollary 2.4]{CV20} (that we recall in Appendix, see Theorems~\ref{th:BCGM} and \ref{th:CV20}) apply to the continuous semigroup  $(M_t)_t$ defined by
\[
	M_t f(x,s) := \mathbb{E}_{(x,s)}\left[f(X_t, S_t)\,\1_{X_t\neq 0}\right]
\]
for $(x,s)\in \mathbb{N}^*\times (0, \bar s_1)$ and $f:\mathbb{N}^*\times (0, \bar s_1) \to \mathbb{R}$ such that $\sup_{(x,s)\in \mathbb{N}^*\times (0, \bar s_1)}\frac{|f(x,s)|}{V(x,s)}<\infty$, where $V$ defined below is such that $c_1 \,W_{\rho,p} \leq V \leq c_2\, W_{\rho,p}$ for $c_1,c_2>0$.
Theorem~\ref{th:main-qsd} is then a combination of this both results whose the former gives the bound $\xi(W_{\rho,p})/\xi(h)$ whereas the latter gives the bound $\xi(W_{\rho,p})/\xi(\psi)$ in \eqref{eq:cv_qsd}. Note that the reason for working with $V$ rather than $W_{\rho,p}$ is that the bound \eqref{tag.BLF1} below is easier to obtain.

Let us fix $\alpha$ and $\theta$ such that
\begin{align}
\label{hyp:theta_p}
\alpha \geq \frac{\rho-1}{k}, \quad
\theta>\frac{p(D+k\,\mu'(\bar s_1))+D}{\mu(\bar s_1)-(p(D+k\,\mu'(\bar s_1))+D)}>0
\end{align}
 and set, for all $(x,s)\in\mathbb{N^*}\times (0,\bar s_1)$ 
\begin{align}
\label{def:psi_V}
\psi:(x,s) \mapsto  x, \qquad V:(x,s) \mapsto \frac{\rho^x e^{\alpha s}}{\log(\rho)} + \frac{1}{s} + \frac{1+\1_{x\leq 1}\theta}{(\bar s_1-s)^{p}}.
\end{align} 
Note that $1\leq\psi \leq V$ on $\mathbb{N^*}\times (0,\bar s_1)$. 
For convenience, we extend the definition of $\psi$ on the absorbing set by $\psi(0,s)= 0$ for $s\in (0,\Sin)$ such that $\psi(X_t,S_t) \,\1_{X_t\neq 0}=\psi(X_t,S_t)$.

We will show that the following three properties are sufficient to prove Theorem~\ref{th:main-qsd} and we will then prove them.
\begin{enumerate}
\item\label{hyp:thm-gen-lyap} 
Bounds on Lyapunov functions: There exists $\eta>D$ and $\zeta>0$ such that, for all $(x,s)\in \X$ and $t\geq 0$,
\begin{align}
\label{tag.BLF1}
\mathbb{E}_{(x,s)}\left[ V(X_t,S_t) \,\1_{X_t\neq 0}\right] \leq e^{-\eta  t} V(x,s) + \zeta_t  \, \psi(x,s)\,,
\tag{BLF1}
\end{align}
\begin{align}
\label{tag.BLF2}
\mathbb{E}_{(x,s)}\left[ \psi(X_t, S_t) \right] \geq e^{-D\,t} \psi(x,s),
\tag{BLF2}
\end{align}
with $\zeta_t:=\zeta \frac{e^{(\barmu-D)t}}{\eta-D}$.
\item \label{hyp:thm-gen-smallset}
Small set assertion: for every $t>0$, for every subset $K:=\llbracket 1,N\rrbracket\times [\delta_1,\delta_2]\subset \X$, with $N\in \NN^*$ and $\delta_2>\delta_1>0$, there exists a probability measure $\nu$ such that $\nu(K)=1$, and $\epsilon>0$ satisfying
\begin{align}
\label{tag:SSA}
\forall (x,s) \in K, \qquad \mathbb{P}_{(x,s)}( (X_t, S_t) \in \cdot )\geq \epsilon \nu. 
\tag{SSA}
\end{align}

\item \label{hyp:thm-gen-ratio}  Mass ratio inequality: for every compact set $K$ of $\X$, 
 we have
\begin{align}
\label{tag:MRI}
\sup_{(x,s),(y,r) \in K} \sup_{t\geq 0} \frac{\mathbb{E}_{(x,s)}\left[ \psi(X_t, S_t) \right] }{\mathbb{E}_{(y,r)}\left[ \psi(X_t, S_t) \right] } < + \infty.
\tag{MRI}
\end{align}
\end{enumerate}

\bigskip

We first establish, in Section~\ref{sec:sufficient.conditions}, that the three properties above (Bounds on Lyapunov functions \eqref{tag.BLF1} and \eqref{tag.BLF2};  Small set assertion \eqref{tag:SSA} and Mass ratio inequality \eqref{tag:MRI})  are sufficient conditions to prove Theorem~\ref{th:main-qsd}.
This three properties are then proved in Section~\ref{sec:QS.behavior}.

We verify the bounds on Lyapunov functions through classical drift conditions on the generator (see for instance \cite[Section 2.4]{BCGM}). The originality of our approach comes from the proof of the small set assertion and the mass ratio inequality (as well as the associated consequences: QSD uniqueness and exponentially fast convergence).
Proofs of these two properties are based on irreducibility properties that we describe in Section~\ref{sect:irre}. Indeed, the small set assertion establishes that, with a positive probability $\epsilon$, every starting point leads the dynamics to the same location at the same time (ensuring then also aperiodicity). A natural way to prove such result is to prove that the measures $\delta_{(x,s)} M_t$ admit a density function  with respect to some reference measure (counting measure for fully discrete processes, Lebesgue measure for diffusion processes...) and show that theses densities possesses a common lower bound. Unfortunately, due to the deterministic part of the dynamics, the measures $\delta_{(x,s)} M_t$ keeps a Dirac mass part. Moreover, we need to show that it holds for any time $t>0$ which is difficult when the process is neither diffusive nor discrete. The mass ratio inequality means that the extinction time does not vary so much with respect to the initial condition. Again, it was shown in \cite{CG20} that these conditions can be reduced to hitting time estimates.  Once more a natural way to prove such result is to assume that $\delta_{(x,s)} M_t$ admit a density function  but with moreover a common upper bound (see for example \cite{berglund2012mixed}). Another way is to use the so-called Harnack inequalities which seem to not be established for hybrid type partial differential equations. To our knowledge there is no such result for quasi-stationary distribution for such hybrid process.

\subsection{Sufficient conditions for the proof of Theorem~\ref{th:main-qsd}}
\label{sec:sufficient.conditions}

We will show that \eqref{tag.BLF1}-\eqref{tag.BLF2};  \eqref{tag:SSA} and \eqref{tag:MRI} implies that conditions of \cite[Theorem 5.1]{BCGM} and \cite[Corollary 2.4]{CV20} hold.

\medskip

Let us first detail how these three properties imply Assumption~A by \cite{BCGM} (see Assumption~\ref{hyp:BCGM}) on $\X$.
First \eqref{tag.BLF1} implies that for all $(x,s)\in \X$ and for all $t\geq 0$, $\mathbb{E}_{(x,s)}\left[ V(X_t, S_t)\,\1_{X_t\neq 0}  \right]\leq (e^{-\eta\,t}+\zeta_t)V(x,s)$ and then $(M_t)_{t\geq 0}$ actually acts on functions $f:\mathbb{N}^*\times (0, \bar s_1) \to \mathbb{R}$ such that $\sup_{(x,s)\in \mathbb{N}^*\times (0, \bar s_1)}\frac{|f(x,s)|}{V(x,s)}<\infty$.

Let $\tau>0$ and $K_R:=\{(x,s)\in \X, \, V(x,s)\leq R \psi(x,s) \}$, with $R$ chosen sufficiently large such that $K_R$ is non empty and such that $R>\frac{\zeta_\tau}{e^{-D\,\tau}-e^{-\eta\,\tau}}$, where $\eta>D$ and $\zeta>0$ are such that \eqref{tag.BLF1} holds. By definition of $V$ and $\psi$, we can easily show that $K_R$ is a compact set of $\X$. We choose $\delta_1,\delta_2>0$ and $N \in \mathbb{N}^*$ such that $K_R\subset K:=\llbracket 1,N\rrbracket\times [\delta_1,\delta_2]\in \X$. Then using the fact that $\psi \leq V/R$ on the complementary of $K_R$, for all $(x,s)\in \X$, we obtain from \eqref{tag.BLF1},
\begin{align*}
\mathbb{E}_{(x,s)}[V(X_\tau,S_\tau)\,\1_{X_\tau\neq 0}] 
& \leq \left( e^{-\eta \tau} + \frac{1}{R} \zeta_\tau \right) V(x,s)+  \zeta_\tau \,\1_{(x,s) \in K_R} \psi(x,s)\\
& \leq \left( e^{-\eta \tau} + \frac{1}{R} \zeta_\tau \right) V(x,s)+  \zeta_\tau \, \1_{(x,s)\in K} \psi(x,s),
\end{align*}
and the bound on $R$ ensures that 
\[\left( e^{-\eta \tau} + \frac{1}{R} \zeta_\tau \right)<e^{-D\,\tau}\,.\]
Consequently \eqref{tag.BLF1} and \eqref{tag.BLF2} imply that Assumptions~(A1) and (A2) of \cite{BCGM} are satisfied.

From \eqref{tag.BLF1} and the fact that $\1_K  \leq \psi\leq V$, for any positive function $f$ and $(x,s) \in K$, we have
\[
 \frac{\mathbb{E}_{(x,s)}\left[ f\left(X_\tau, S_\tau\right) \psi\left(X_\tau, S_\tau\right) \right]}{\mathbb{E}_{(x,s)}\left[ \psi\left(X_\tau, S_\tau\right)\right]} \geq \frac{1 }{\left(e^{-\eta \tau } +\zeta_\tau\right) \sup_K V} \mathbb{E}_{(x,s)}\left[ f(X_\tau, S_\tau) \1_{(X_\tau, S_\tau)\in K} \right],
\]
and then, as $K$ was chosen of the form $\llbracket 1,N\rrbracket\times [\delta_1,\delta_2]$, by \eqref{tag:SSA}, Assumption~(A3) of \cite{BCGM} is also satisfied.

Moreover \eqref{tag:MRI} ensures the existence of some constant $C\geq 1$ such that for every $(x,s), (y,r) \in K$ and $t\geq 0$, we have 
\begin{align*}
\frac{\mathbb{E}_{(x,s)}\left[ \psi(X_t, S_t) \right]}{\psi(x,s)}
&\leq  \mathbb{E}_{(x,s)}\left[ \psi(X_t, S_t) \right]
\leq C \mathbb{E}_{(y,r)}\left[ \psi(X_t, S_t) \right] \leq CN \frac{\mathbb{E}_{(y,r)}\left[ \psi(X_t, S_t) \right]}{\psi(y,r)},
\end{align*}
then integrating the last term w.r.t. $\nu(\dif y,\dif r)$ on $K$ leads to Assumption~(A4) of \cite{BCGM}.

Therefore Theorem~5.1 of \cite{BCGM} implies that there exist a unique QSD $\pi$ on $\X$ such that $\pi(V)<+\infty$, a measurable function $h:\X \to \mathbb{R}_+$ such that $\sup_{(x,s)\in\X}h(x,s)/V(x,s)<\infty$ and constants $\lambda, \, C',\, \omega'>0$ such that for any starting distribution $\xi$ on $\X$ such that $\xi(V)<+\infty$ and for all $t\geq 0$,  
\begin{equation}
\label{eq:cv_qsd_X}
\sup_{\|f \|_\infty \leq 1 } \left|\mathbb{E}_\xi \left[ f(X_t,S_t)  \ | \ \Text>t\right] - \pi(f) \right| \leq C '\frac{\xi(V)}{\xi(h)}  e^{-\omega' t}\,
\end{equation}
and 
\begin{equation}
\label{eq:cv_h_bcgm}
\sup_{\|f \|_\infty \leq 1 } \left|e^{\lambda \,t}\,\mathbb{E}_\xi\left[f(X_t,S_t)\,\1_{X_t\neq 0}\right]-\xi(h)\,\pi(f)\right|\leq C'\,\xi(V)\,e^{-\omega'\,t}\,.
\end{equation}
Taking $f\equiv \1$ and $\xi=\delta_{(x,s)}$ with $(x,s)\in\X$ in \eqref{eq:cv_h_bcgm} leads to the expression of $h$ given by \eqref{def:h} and choosing $\xi=\pi$ ensures  that $\lambda$ satisfies \eqref{eq:lambda}.
In addition \cite[Lemma~3.4.]{BCGM} ensures that $h>0$  on $\X$.
Moreover from \eqref{eq:def_QSD} and \eqref{eq:lambda}, for all $t\geq 0$, $\mathbb{E}_\pi\left[\psi(X_t,S_t)\right]=e^{-\lambda\,t}\pi(\psi)$, then integrating \eqref{tag.BLF2} with respect to $\pi$ gives the bounds~\eqref{eq:eigenvalue.bounds}.

\bigskip

Let us now detail how the three properties imply that $(M_{n\,\tau})_{n\in\mathbb{N}}$ satisfies Assumption~G by \cite{CV20} (see Assumption~\ref{hyp:CV20}).
We consider the same compact $K=\llbracket 1,N\rrbracket\times [\delta_1,\delta_2]$ as before.
By \eqref{tag:SSA}, for all $(x,s)\in K$ and all measurable $A\subset K$,
\[
	\mathbb{E}_{(x,s)}\left[V(X_\tau,S_\tau)\,\1_{X_\tau \neq 0}\,\1_{(X_\tau,S_\tau)\in A}\right]
	\geq \epsilon \int_A V(y,r)\,\nu(\dif y, \dif r)
	\geq \tilde \epsilon\, \nu(A)\,V(x,s)
\]
with $\tilde \epsilon:=\epsilon \frac{\inf_{(y,r)\in K}V(y,r)}{\sup_{(y,r)\in K}V(y,r)}>0$, then Assumption~(G1) of \cite{CV20} is satisfied.
Assumptions~(A1) and (A2) of \cite{BCGM} imply Assumption~(G2) of \cite{CV20}, then it holds. As for all $(y,r)\in K$, $1\leq \psi(y,r)\leq N$, then \eqref{tag:MRI} directly implies  Assumption~(G3) of \cite{CV20}. Moreover, as \eqref{tag:SSA} holds for all $t>0$, then Assumption~(G4) of \cite{CV20} is also satisfied. 
Finally, by \eqref{tag.BLF1} and \eqref{tag.BLF2}, for all $(x,s)\in \X$ and all $t\in[0,\tau]$,
\[
\frac{\mathbb{E}_{(x,s)}\left[ V(X_t,S_t) \,\1_{X_t\neq 0}\right]}{V(x,s)} \leq 1  + \zeta_\tau 
\qquad
\text{and}
\qquad
\frac{\mathbb{E}_{(x,s)}\left[ \psi(X_t, S_t) \right]}{\psi(x,s)} \geq e^{-D\,\tau} \,.
\]
Therefore, from \cite[Corollary 2.4.]{CV20},  there exists $C''>0$, $\omega''>0$ and a positive measure $\nu_P$ on $\X$ satisfying $\nu_P(V)=1$ and $\nu_P(\psi)>0$ such that for any starting distribution $\xi$ on  $\X$ such that $\xi(V)<+\infty$, we have  
\begin{align}
\label{eq:borne_CV}
 \sup_{\|f \|_\infty \leq 1 }   \left| \frac{\xi M_t\,f}{\xi M_t\,V} -\nu_P(f)\right| \leq C''\,e^{-\omega''\,t} \frac{\xi(V)}{\xi(\psi)}, \quad \forall t\geq 0.
\end{align}
Following the same way as \cite[Proof of Corollary 3.7]{BCGM}, for all $f$ such that $\|f \|_\infty \leq 1$, from triangular inequality and as $\left|\frac{\nu_P(f)}{\nu_P(\1)}\right|\leq 1$, we have
\begin{align*}
\left| \frac{\xi M_t f}{\xi M_t\,\1} - \frac{\nu_P(f)}{\nu_P(\1)}\right|
	&\leq
			\frac{\xi M_t V}{\xi M_t\,\1}\,  \left(
			\left| \frac{\xi M_t f}{\xi M_t\,V} - \nu_P(f)\right|
			+
			\left|\frac{\nu_P(f)}{\nu_P(\1)}\right|\,\left| \frac{\xi M_t \1}{\xi M_t\,V} - \nu_P(\1)\right| \right)\\
	&\leq
			\frac{\xi M_t V}{\xi M_t\,\1}\,  \left(
			\left| \frac{\xi M_t f}{\xi M_t\,V} - \nu_P(f)\right|
			+
			\left| \frac{\xi M_t \1}{\xi M_t\,V} - \nu_P(\1)\right| \right)\,.
\end{align*}
Applying \eqref{eq:borne_CV} first to $f$ and second to $\1$ gives 
 \begin{align*}
\left| \frac{\xi M_t f}{\xi M_t\,\1} - \frac{\nu_P(f)}{\nu_P(\1)}\right|
	&\leq
			\frac{\xi M_t V}{\xi M_t\,\1}\,  
			2\,C''\,e^{-\omega''\,t} \frac{\xi(V)}{\xi(\psi)}\,.
\end{align*}
Moreover, \eqref{eq:borne_CV} applied to $\1$ also leads to
\[
	 \frac{\xi M_t\,\1}{\xi M_t\,V} \geq \nu_P(\1) -  C''\,e^{-\omega''\,t} \frac{\xi(V)}{\xi(\psi)}\,,
\]
then for $t\geq \frac{1}{\omega''}\,\log\left(\frac{2\,C''\,\xi(V)}{\nu_P(\1)\,\xi(\psi)}\right)$ we have $\frac{\xi M_t\,\1}{\xi M_t\,V} \geq \frac{\nu_P(\1)}{2}$  and then
 \begin{align*}
\left| \frac{\xi M_t f}{\xi M_t\,\1} - \frac{\nu_P(f)}{\nu_P(\1)}\right|
	&\leq
			\frac{4}{\nu_P(\1)}\,  
			C''\,e^{-\omega''\,t} \frac{\xi(V)}{\xi(\psi)}\,.
\end{align*}
Furthermore, for $t\leq \frac{1}{\omega''}\,\log\left(\frac{2\,C''\,\xi(V)}{\nu_P(\1)\,\xi(\psi)}\right)$, we obtain
 \begin{align*}
 \left| \frac{\xi M_t f}{\xi M_t\,\1} - \frac{\nu_P(f)}{\nu_P(\1)}\right| \leq 2\leq \frac{4}{\nu_P(\1)}\,C''\,e^{-\omega''\,t}\,\frac{\xi(V)}{\xi(\psi)}\,.
\end{align*}
Therefore,
\begin{equation}
\label{eq:cv_qsd_cv}
\sup_{\|f \|_\infty \leq 1 } \left|\mathbb{E}_\xi \left[ f(X_t,S_t)  \ | \ \Text>t\right] - \frac{\nu_P(f)}{\nu_P(\1)} \right| \leq \frac{4}{\nu_P(\1)}\,  
			C''\,e^{-\omega''\,t} \frac{\xi(V)}{\xi(\psi)}\,.
\end{equation}

\medskip

Finally, from \eqref{eq:cv_qsd_X} and \eqref{eq:cv_qsd_cv}, we have $\pi = \frac{\nu_P}{\nu_P(\1)}$. Moreover, on $\X$, we have 
\[\min\{\log(\rho)^{-1},1\}\,W_{\rho,p} \leq V\leq \max\left\{1+\theta,\frac{e^{\alpha\,\bar s_1}}{\log{(\rho)}}\right\}\, W_{\rho,p},\]
then \eqref{eq:cv_qsd} and \eqref{eq:cv_qsd_X_eq_h} hold with $\omega=\min\{\omega',\omega''\}$ and $C=\max\left\{1+\theta,\frac{e^{\alpha\,\bar s_1}}{\log(\rho)}\right\}\,\max\left\{C',\frac{4\,C''}{\nu_P(\1)}\right\}$.  

Note that \eqref{eq:cv_qsd_X_eq_h} and \eqref{def:h}, which have been proved using \cite[Theorem 5.1]{BCGM}, could also have been proved using the second part of \cite[Corollary 2.4]{CV20}, where \eqref{eq:cor2.4.CV20.eta} holds with $\lambda_0=-\lambda$ and $\eta_P= \frac{h}{\nu_P(\1)}$.

\medskip 

The previous QSD $\pi=\pi_{\rho,p}$ depends on $\rho$ and $p$.  However, for any starting distribution $\xi$ on $\mathbb{N}^*\times (0, \bar s_1)$ such that $\xi(W_{\rho,p})<+\infty$ for all $\rho>1$, $p\in\left(0,\frac{\mu(\bar s_1)-D}{D+k\,\mu'(\bar s_1)}\right)$ (dirac measures on $\mathbb{N}^*\times (0, \bar s_1)$ for example), \eqref{eq:cv_qsd} gives that
\[
	\lim_{t\to \infty}\mathbb{P}_\xi \left[ (X_t,S_t)\in .  \ | \ \Text>t\right] = \pi_{\rho,p}
\]
then by uniqueness of the limit, all QSD indexed by $\rho$ and $p$ are the same.

\subsection{Additional notation}
\label{sec.add.notations}

For convenience, we extend the notation $\ens{s_1}{s_2}$ and $\ensco{s_1}{s_2}$ of respectively the closed and semi-open sets of values between $s_1$ and $s_2$, usual for $s_1\leq s_2$, to $s_1>s_2$, in the following way 
\[
	\ens{s_1}{s_2}=
	\begin{cases}
	[s_1,s_2] & \text{if $s_1\leq s_2$},\\
 	[s_2,s_1] & \text{if $s_1>s_2$},
	\end{cases}\,
	\qquad
	\ensco{s_1}{s_2}=
	\begin{cases}
	[s_1,s_2) & \text{if $s_1\leq s_2$},\\
 	(s_2,s_1] & \text{if $s_1>s_2$}.
	\end{cases}\,
\]

Let us begin by giving additional notation relative to flow associated to the ordinary differential equation \eqref{eq:substrate}; namely this concerns the case when the number of bacteria is constant, that is the behaviour between the population jumps.

For all $(\ell,s_0)\in \NN^*\times \RR_+$, let $t\mapsto \phi(\ell,s_0,t)$ be the flow function  associated to the substrate equation \eqref{eq:substrate} with $\ell$ bacteria and initial substrate concentration $s_0$. Namely, $\phi$ is the unique solution of
\begin{align}
\label{eq:flow}
\begin{cases}
&\frac{\dif \phi(\ell,s_0,t)}{\dif t} = D(\Sin-\phi(\ell,s_0,t))-k\,\mu(\phi(\ell,s_0,t))\,\ell,\\
& \phi(\ell,s_0,0)= s_0.
\end{cases}\,
\end{align}
This flow converges when $t\to\infty$ to $\bar s_\ell$ which is the unique solution of 
\begin{equation}
\label{eq:defsbar}
	D(\Sin-\bar s_\ell)-k\,\mu(\bar s_\ell)\,\ell = 0\,.
\end{equation}
where the sequence of points $(\bar s_\ell)_{\ell \geq 1}$ is strictly decreasing (see Lemmas~\ref{lem:defsbar} and \ref{lem:rapprochement}). Due to monotony properties, (see Lemmas~\ref{prop.monotony.flow} and \ref{lem:rapprochement}) we can build inverse functions of $t \mapsto \phi(\ell, s_0, t)$ and $s_0 \mapsto \phi(\ell, s_0, t)$ (both applications are represented in Figure~\ref{fig.flow}). On the one hand, for all $\ell\in \NN^*$ and $s_0\in\RR_+$ such that $s_0\neq \bar s_\ell$, the application $t\mapsto \phi(\ell,s_0,t)$ is bijective from $\RR_+$ to $\ensco{s_0}{\bar s_\ell}$. We denote by $s \mapsto \phit(\ell,s_0,s)$  the prolongation of its inverse function, defined from $\RR_+$ to $\widebar{\RR}_+$ by 
\[
\phit(\ell,s_0,s) = 
\begin{cases}
t \text{ such that } \phi(\ell,s_0,t)=s & \text{ if } s \in \ensco{s_0}{\bar s_\ell},\\
+ \infty & \text{ if not}.
\end{cases}\,
\]
It represents the time that the substrate concentration needs to go from $s_0$ to $s$ with a fixed number $\ell$ of bacteria  (without jump event). If $s$ is not reachable from $s_0$ with $\ell$ individuals, then this time is considered as infinite. By definition, $\phit(\ell,s_0,\phi(\ell,s_0,t))=t$ and if $s \in \ensco{s_0}{\bar s_\ell}$ then $\phi(\ell,s_0,\phit(\ell,s_0,s))=s$.

\medskip

On the other hand, for all $\ell \in \NN^*$ and $t\in\RR_+$, the application $s_0 \mapsto \phi(\ell,s_0,t)$ is bijective from $\RR_+$ to $[\phi(\ell,0,t),\,+\infty)$. Let  $s\mapsto \phis(\ell,s, t)$ be the prolongation of its inverse function, which is defined from  $\RR_+$ to $\RR_+$ by
\[
\phis(\ell,s,t) = 
\begin{cases}
s_0 \text{ such that } \phi(\ell,s_0,t)=s & \text{ if } s \geq \phi(\ell,0,t), \\
0 & \text{ if not}.
\end{cases}\,
\]

For $s \geq \phi(\ell,0,t)$, it represents the needed initial substrate concentration to obtain substrate concentration $s$ at time $t$ by following the dynamics with $\ell$ individuals. By definition, $\phis(\ell,\phi(\ell,s_0,t),t)=s_0$ and if $s\geq \phi(\ell,0,t)$, then $\phi(\ell,\phis(\ell,s,t),t)=s$.

\medskip

\begin{figure}
\captionsetup[subfigure]{justification=centering}
\begin{center}
\begin{subfigure}{0.45\textwidth}
 \includegraphics[height=5.5cm]{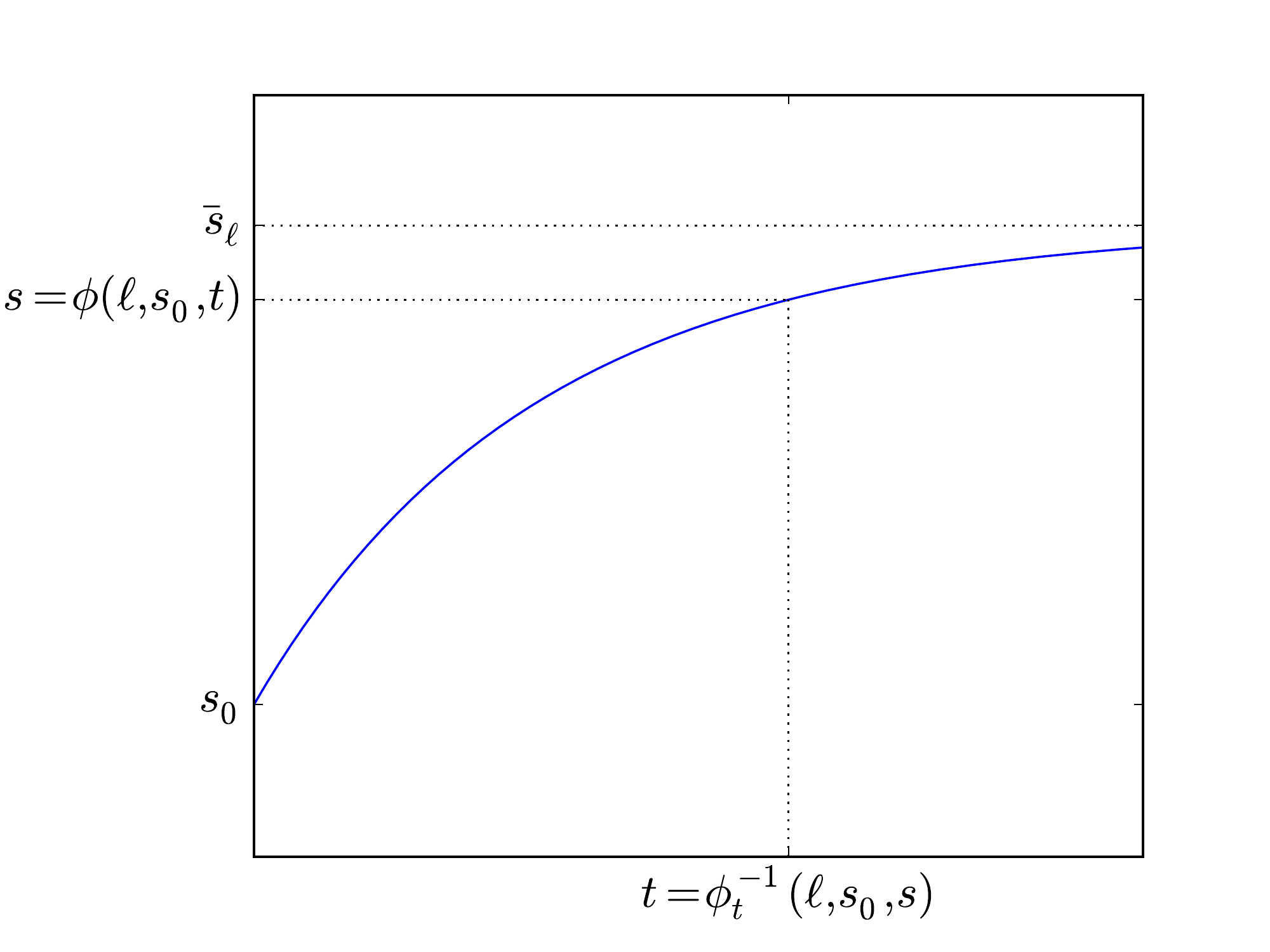}
\caption{$t\mapsto \phi(\ell,s_0,t)$}
\end{subfigure}
\begin{subfigure}{0.45\textwidth}
\includegraphics[height=5.5cm]{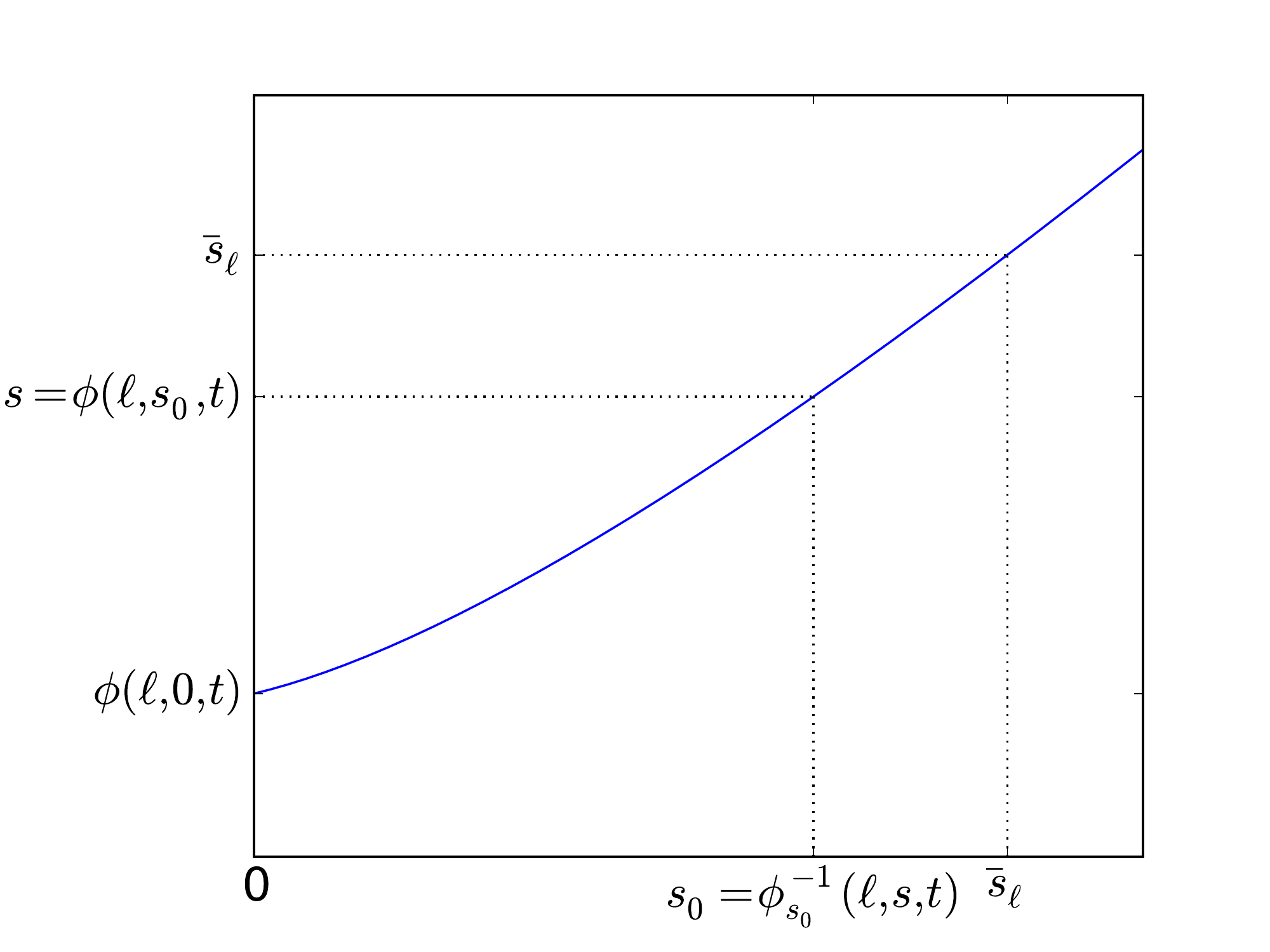}
\caption{$s_0\mapsto \phi(\ell,s_0,t)$}
\end{subfigure}
\end{center}
\caption{\label{fig.flow}Graphical representation of $t\mapsto \phi(\ell,s_0,t)$ and $s_0\mapsto \phi(\ell,s_0,t)$. }
\end{figure}

\subsection{Bounds on the hitting times of the process}
\label{sect:irre}

In this section, we will develop some irreducibility properties of the Crump-Young process through bounds on its hitting times, which will be useful to prove the mass ratio inequality in Section~\ref{sec:mass.ratio}. To that end, let $K$ be a non empty compact set of  $\NN^*\times (0,\bar s_1)$ and let $\SminK=\min_{(x, s)\in K} s $ and $\SmaxK=\max_{(x, s)\in K} s $.
We will prove that each point of $K\backslash \bigcup_{\ell\geq 1}(\ell,\bar s_\ell)$ can be reached, in a uniform way, from any point of $K$. Points $(\ell,\bar s_\ell)$ can not be reached.

 There exists $L_{\SminK}\in\NN^*$, such that $\bar s_\ell < \SminK$ for all $\ell\geq L_{\SminK}$ (see Lemma~\ref{lem:defsbar}), let then set $\lmax=\max\{\max_{(\ell, s)\in K} \ell,\, L_{\SminK}\}$. The constants $\SminK$, $\SmaxK$ and $\lmax$ satisfy $K\subset \setunlmax\times[\SminK,\SmaxK]\subset \setunlmax\times (\bar s_{\lmax},\bar s_1)$.

Let also 
\begin{align}
\label{def:tmin}
t_{\min}:= \max\left\{\phit\left(1,\bar s_{\lmax},\SmaxK\right),\, \,\phit\left(\lmax,\bar s_1,\SminK\right)\right\}
\end{align}
be the maximum between the time to go from $\bar s_{\lmax}$ to $\SmaxK$ with one individual and the time to go from $\bar s_1$ to  $\SminK$ with $\lmax$ individuals. As both times are finite then $t_{\min}<\infty$. 
Note that, from the monotony properties of the flow (see Lemma~\ref{lem:rapprochement}) for all $s_1,s_2$ such that $\bar s_{\lmax}\leq s_1\leq s_2\leq \SmaxK$, then $\phit(1,s_1,s_2)\leq \phit\left(1,\bar s_{\lmax},\SmaxK\right) \leq t_{\min}$ and for all $s_1,s_2$ such that $\SminK\leq s_2\leq s_1\leq \bar s_1$,  $\phit(\lmax,s_1,s_2)\leq \phit\left(\lmax,\bar s_1,\SminK\right)\leq t_{\min}$. Then $t_{\min}$ is the minimal quantity such that, for all $s_1, s_2$ satisfying $\bar s_{\lmax}\leq s_1\leq s_2\leq \SmaxK$ or $\SminK\leq s_2\leq s_1\leq \bar s_1$,  there exists $L\in \setunlmax$, such that $\phit(L,s_1,s_2)\leq t_{\min}$ (\textit{i.e. } the substrate concentration $s_2$ is reachable from $s_1$ in a time less than $t_{\min}$ with a constant bacterial population in $\setunlmax$).

\begin{proposition}
\label{prop.partout.a.partout}
For all $\tau_0>t_{\min}$, $\tau>\tau_0$, $\varepsilon>0$ and $\delta>0$, there exists $C>0$, such that, for all $(x,s)\in K$, for all $(y,r)\in K$ satisfying $|r-\bar s_y|>\delta$, we have
\[
\PP_{(x,s)}(\tau-\varepsilon \leq \tilde T_{y,r} \leq \tau)\geq
	C>0\,,
\]
where $\tilde T_{y,r}:= \inf\{t\geq \tau_0,\, (X_t,S_t)=(y,r)\}$ is the first hitting time of $(y,r)$ after $\tau_0$.
\end{proposition}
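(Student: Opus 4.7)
The plan is to exhibit, for each $(x,s),(y,r)\in K$ with $|r-\bar s_y|>\delta$, an explicit piecewise-deterministic reference trajectory from $(x,s)$ that reaches $(y,r)$ only at its final time $t^\dagger$, which can be tuned to lie in $[\tau-\varepsilon,\tau]\cap[\tau_0,\tau]$. The probability that the Crump--Young process follows a small tube around this trajectory is then bounded from below uniformly on $K$ via the standard PDMP jump-density formula.

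Assume $r<\bar s_y$ (the case $r>\bar s_y$ is symmetric, using instead a detour through the single-individual regime, in which the flow monotonically increases toward $\bar s_1>r$). The template has five phases: a no-jump waiting phase of tunable duration $T_0\geq 0$ with $x$ individuals, bringing the state to $(x,\phi(x,s,T_0))$; in a window of length $\eta$, a rapid sequence of $|\lmax-x|$ division or washout jumps reaching $(\lmax,\tilde s)$ with $\tilde s$ close to $\phi(x,s,T_0)$; a flow with $\lmax$ individuals of duration at most $t_{\min}+o(\eta)$ (from the definition of $t_{\min}$ and Lemma~\ref{lem:rapprochement}) decreasing the substrate down to a chosen $r_\ast\in(\bar s_{\lmax},r)$; in a window of length $\eta$, $\lmax-y$ rapid washouts reaching $(y,\tilde r)$ with $\tilde r\approx r_\ast$; and a final flow with $y$ individuals up to $r$ of duration $\phit(y,\tilde r,r)$. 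Because $r<\bar s_y$ forces $y<\lmax$ (otherwise $\bar s_y=\bar s_{\lmax}<\SminK\leq r$ would contradict $r<\bar s_y$), the template visits $(y,r)$ only at its final time $t^\dagger$: on the $\lmax$-flow the population equals $\lmax\neq y$, and in the other phases the substrate equals $r$ only on a Lebesgue-negligible set of jump-time configurations. Tuning $T_0$ covers $t^\dagger\in[t^{\dagger,\min},\infty)$ with $t^{\dagger,\min}\leq t_{\min}+o(\eta)<\tau_0$, so the target window $[\tau-\varepsilon,\tau]\cap[\tau_0,\tau]$, of length $\min(\varepsilon,\tau-\tau_0)>0$, is hit.

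Setting $N:=|\lmax-x|+(\lmax-y)$, the probability that the first $N$ jumps fall in prescribed small windows $I_1,\ldots,I_N$ around the template jump times, with the prescribed types, and no further jump occurs before $t^\dagger$, has the standard PDMP form
\[
\int_{\Sigma_N}\prod_{i=1}^N\lambda_i(t_i)\,\exp\!\Bigl(-\int_0^{t^\dagger}\Lambda(u)\,\dif u\Bigr)\,\dif t_1\cdots\dif t_N,
\]
where $\Sigma_N$ is the set of admissible ordered jump-time configurations in $I_1\times\cdots\times I_N$, $\lambda_i$ is the rate of the prescribed $i$-th jump along the reference, and $\Lambda$ is the total jump rate. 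Along the template the population lies in $\llbracket 1,\lmax\rrbracket$ and the substrate in a compact subset of $(0,\bar s_1)$ bounded away from $0$ (since $r_\ast>\bar s_{\lmax}>0$), so by Assumption~\ref{hyp:mu} the rates $D\ell$ and $\mu(s)\ell$ are uniformly bounded above and away from zero. Integrating over jump windows of total measure $\sim\eta^N$ and over the tuning interval of length $\min(\varepsilon,\tau-\tau_0)$ yields a lower bound $c(K,\tau,\tau_0,\varepsilon,\delta)>0$.

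The main difficulty is making this bound uniform over $(x,s,y,r)\in K^2$. The template depends on the discrete data $x,y$ and continuously on $(s,r)$. I would stratify by the finitely many integer pairs $(x,y)\in\llbracket 1,\lmax\rrbracket^2$ and, on each stratum, use the continuity and monotonicity of the flow inverses $\phit,\phis$ recalled in Section~\ref{sec.add.notations} to check that $r_\ast$, the phase durations, and the jump-rate integrand depend continuously on $(s,r)$. The condition $|r-\bar s_y|>\delta$ keeps the final-flow duration $\phit(y,\tilde r,r)$ uniformly bounded by ensuring $r_\ast$ stays a positive distance from $\bar s_y$. Compactness of $K$ and the finite stratification then give the uniform constant $C>0$.
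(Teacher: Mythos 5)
You identify the right obstruction --- you need a trajectory whose arrival time at $(y,r)$ can be tuned into the window $[\tau-\varepsilon,\tau]$ for \emph{arbitrary} $\tau$ --- but the tuning mechanism you propose breaks down in degenerate cases. Your free parameter is the length $T_0$ of the initial no-jump phase with $x$ individuals, and you assert that $T_0\mapsto t^\dagger$ covers $[t^{\dagger,\min},\infty)$. This fails when $x=\lmax$ (and, in the symmetric case $r>\bar s_y$, when $x=1$): phase~2 is then empty, the initial wait merges with the phase-3 flow, and by Corollary~\ref{cor:monotonyphit}
\[
\phit\bigl(\lmax,\phi(\lmax,s,T_0+\eta),r_\ast\bigr)=\phit(\lmax,s,r_\ast)-(T_0+\eta),
\]
so $T_0$ cancels exactly and $t^\dagger=\phit(\lmax,s,r_\ast)+\eta+\phit(y,\tilde r,r)$ is a \emph{fixed} number, generically outside $[\tau-\varepsilon,\tau]$. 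Recovering a tunable arrival time would require sending $r_\ast\to\bar s_{\lmax}^+$, but that contradicts your stated bound ``duration at most $t_{\min}+o(\eta)$'' on phase~3, and more generally $r_\ast<\SminK$ makes $\phit(\lmax,\cdot,r_\ast)$ exceed $t_{\min}$. There is a second gap in the claim that the template visits $(y,r)$ only at $t^\dagger$: in phase~1, if $x=y$ and $s<r<\bar s_y$, the deterministic flow with $y$ individuals passes through substrate level $r$ at the non-random time $\phit(y,s,r)$, which is not Lebesgue-negligible; if that time lies in $[\tau_0,T_0]$, the tube trajectories hit $(y,r)$ early and $\tilde T_{y,r}$ may fall below $\tau-\varepsilon$.

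The paper's decomposition into Lemmas~\ref{de.partout.a.presque.partout}, \ref{lemma.stay.in.S} and \ref{lemma.Tys.depuis.S} is designed precisely to avoid both pitfalls: the flexible time-budget is absorbed not by an initial wait but by a middle \emph{idling} phase inside $\mathcal{S}_{y,r}^\varepsilon=\setunlmax\times\ens{\phis(y,r,\varepsilon/3)}{\phis(y,r,\varepsilon/4)}$, a band chosen so that (i) $(y,r)\notin\mathcal{S}_{y,r}^\varepsilon$, so residence there can never trigger a premature hit, (ii) sojourns of any prescribed length $T$ have probability bounded below, uniformly over starting points in $\mathcal{E}_{y,r}^\varepsilon$, by oscillating the population across the index $\ell$ where the flow changes direction --- a mechanism that works independently of $x$ and $y$, hence is immune to the degenerate cases $x\in\{1,\lmax\}$, and (iii) the band is within time $\varepsilon$ of $(y,r)$. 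Converting ``tune a real parameter'' into ``stay in a set for a flexible time'' is what makes the lower bound uniform over $K$ and valid for all $\tau>\tau_0$. To repair your proof you would have to interleave a comparable idling construction (e.g.\ oscillating $r_\ast$ toward $\bar s_\lmax$ or inserting a division-then-washout loop) and re-derive the duration and avoidance estimates, at which point the argument essentially becomes the paper's.
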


Proof of Proposition~\ref{prop.partout.a.partout} relies on sharp decomposition of all possibilities of combination of initial conditions. Instead of giving all details on the proof, we will expose its main steps and the technicalities are postponed in Appendix.
\begin{proof}
Let 
\[
\bar \varepsilon:= \min\left\{
	\frac{3\,\min\{\SminK-\bar s_{\lmax},\, \bar s_1-\SmaxK\}}
				{\max\{D\,\Sin,\, k\,\barmu\,\lmax\}} \, , \, 
	\frac{4\,\min\{\SminK-\bar s_{\lmax},\,\bar s_1-\SmaxK\}\,D\,(\tau_0-t_{\min})/2}
		{\max\{D\,\Sin,\, k\,\barmu\,\lmax\}\,(1+D\,(\tau_0-t_{\min})/2)}\right\}.
\]
We assume, without loss of generality, that $0<\varepsilon\leq \min\{\tau-\tau_0; \bar \varepsilon\}$ because if the result holds for all $\varepsilon>0$ sufficiently small, then it holds for all $\varepsilon>0$. 
Assuming $0<\varepsilon\leq \frac{3\,\min\{\SminK-\bar s_{\lmax},\, \bar s_1-\SmaxK\}}{\max\{D\,\Sin,\, k\,\barmu\,\lmax\}}$ ensures that, for $(y,r)\in K$, $\bar s_{\lmax}\leq \phis(y,r,\frac{\varepsilon}{3})\leq\bar s_1$ (and consequently that $\bar s_{\lmax}\leq \phis(y,r,\frac{\varepsilon}{4})\leq\bar s_1$); see Lemma~\ref{lemme.control.phi_moins_s}-\ref{lemme.control.phi_moins_s.maj} and Remark~\ref{phis0.less.sin}. Consequently $\mathcal{S}_{y,r}^\varepsilon:=\setunlmax \times \ens{\phis(y,r,\frac{\varepsilon}{3})}{\phis(y,r,\frac{\varepsilon}{4})} \subset \setunlmax \times [\bar s_{\lmax}, \bar s_1]$.

\medskip

To prove Proposition~\ref{prop.partout.a.partout}, we will prove that, with positive probability, the process: 
\begin{enumerate}
\item reaches the set
$\mathcal{S}_{y,r}^\varepsilon$ before $\tau_0$; 
\item stays in this set until the time $\tau-\varepsilon$;
\item reaches $(y,r)$ in the time interval $[\tau-\varepsilon,\,\tau]$.
\end{enumerate}
These steps are illustrated in Figure~\ref{fig.proof.prop.almost.everywhere} and  the associated probabilities are bounded from below in lemmas below. These ones are proved in Appendix~\ref{sec.proofs.lemmas}. To state them, let us introduce $\mathcal{E}_{y,r}^\varepsilon$, defined by
\[
\mathcal{E}_{y,r}^\varepsilon:=\{(\ell,r^\varepsilon_{y,r}) \, | \, \ell \in \setunlmax \text{ and } \bar s_\ell \geq r^\varepsilon_{y,r}\} \mathbin{\scalebox{1.5}{\ensuremath{\cup}}} \{(\ell,R^\varepsilon_{y,r}) \, | \, \ell \in \setunlmax \text{ and } \bar s_\ell \leq R^\varepsilon_{y,r}\} \subset\mathcal{S}_{y,r}^\varepsilon,
\]
where $r^\varepsilon_{y,r}= \min(\phis(y,r,\frac{\varepsilon}{3}),\phis(y,r,\frac{\varepsilon}{4}))$ and $R^\varepsilon_{y,r}=\max(\phis(y,r,\frac{\varepsilon}{3}),\phis(y,r,\frac{\varepsilon}{4}))$. The set $\mathcal{E}_{y,r}^\varepsilon$ represents the points $(\ell,s) \in \mathcal{S}_{y,r}^\varepsilon$ such that $s$ belongs to the bounds of the substrate part $\ens{\phis(y,r,\frac{\varepsilon}{3})}{\phis(y,r,\frac{\varepsilon}{4})}$ and $\ell$ is such that the flow $t\mapsto \phi(\ell,s,t)$ leads the dynamics to stay in $\ens{\phis(y,r,\frac{\varepsilon}{3})}{\phis(y,r,\frac{\varepsilon}{4})}$, at least for small $t$, if $\phis(y,r,\frac{\varepsilon}{3})\neq \phis(y,r,\frac{\varepsilon}{4})$ (that is if $r\neq\bar s_y$).
Note that $\mathcal{E}_{y,r}^\varepsilon$ is well defined if $r=\bar s_y$ and we obtain $\mathcal{E}_{y,\bar s_y}^\varepsilon=\setunlmax \times \{\bar s_y\}$.

\begin{figure}
\begin{center}
\includegraphics[width=10cm]{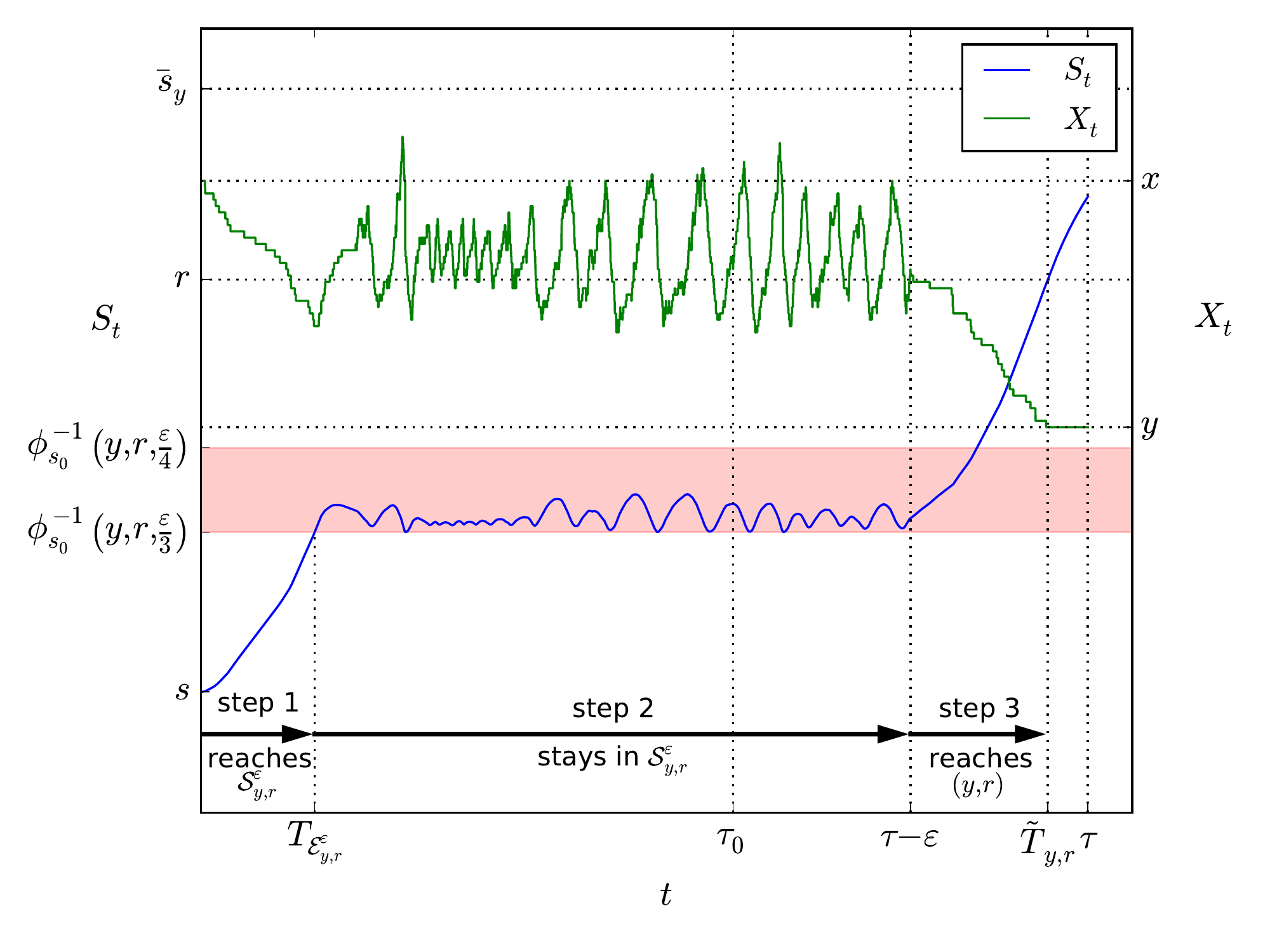}
\end{center}
\caption{\label{fig.proof.prop.almost.everywhere}Example of dynamics of the process $(X_t,S_t)_t$ from $(x,s)$ to $(y,r)$, illustrating the three steps of the proof of Proposition~\ref{prop.partout.a.partout}. Step 1: the process reaches the set $\mathcal{S}_{y,r}^\varepsilon$ before $\tau_0$. Step 2: the process stays in $\mathcal{S}_{y,r}^\varepsilon$ until $\tau-\varepsilon$. Step 3: the process reaches $(y,r)$ before $\tau$. }
\end{figure}

\medskip

\begin{lemma}
\label{de.partout.a.presque.partout}
For all $\tau_0>t_{\min}$, there exists $C_1^{\tau_0}>0$, such that, for all $(x,s)\in K$, for all $(y,r)\in K$ and for $0<\varepsilon\leq\frac{4\,\min\{\bar s_1-\SmaxK,\, \SminK-\bar s_{\lmax}\}\,D\,(\tau_0-t_{\min})/2}{\max\{D\,\Sin,\, k\,\barmu\,\lmax\}\,(1+D\,(\tau_0-t_{\min})/2)}$,
\[
\PP_{(x,s)}\big(T_{\mathcal{E}_{y,r}^\varepsilon}
	\leq \tau_0\big)\geq C_1^{\tau_0},
\]
where 
$T_{\mathcal{E}_{y,r}^\varepsilon}:=\inf\left\{t\geq 0,\, (X_t,S_t)\in \mathcal{E}_{y,r}^\varepsilon\right\}$. 
\end{lemma}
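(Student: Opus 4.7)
We build, for any $(x,s),(y,r)\in K$, an explicit deterministic path from $(x,s)$ reaching a point of $\mathcal{E}_{y,r}^\varepsilon$ within time $\tau_0$, and we show that the trajectory of $(X_t,S_t)$ stays in a thin tube around it with probability uniformly bounded below. Since $\tau_0>t_{\min}$, we have a strict time surplus $\delta_0:=(\tau_0-t_{\min})/2>0$ which we devote to driving $X$ to a suitable value via a few discrete jumps. The quantitative upper bound on $\varepsilon$ in the statement ensures both that $r^\varepsilon_{y,r},R^\varepsilon_{y,r}\in[\bar s_{\lmax},\bar s_1]$ (so the target substrate values are reachable by the flow) and that during the jump phase the substrate moves by at most a controlled fraction of $\min(\bar s_1-\SmaxK,\SminK-\bar s_{\lmax})$ (so the subsequent flow phase still terminates within time $t_{\min}$).

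The candidate target depends on the position of $s$. If $s\ge R^\varepsilon_{y,r}$, pick $(\ell^*,s^*)=(\lmax,R^\varepsilon_{y,r})$: it lies in $\mathcal{E}_{y,r}^\varepsilon$ because $\bar s_{\lmax}<\SminK\le R^\varepsilon_{y,r}$, and the flow with $\lmax$ individuals decreases from $s$ past $R^\varepsilon_{y,r}$ in time at most $\phit(\lmax,\bar s_1,\SminK)\le t_{\min}$ by the monotonicity of the flow. If $s\le r^\varepsilon_{y,r}$, pick $(\ell^*,s^*)=(1,r^\varepsilon_{y,r})$ symmetrically, using $\bar s_1>\SmaxK\ge r^\varepsilon_{y,r}$. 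In the intermediate range $s\in(r^\varepsilon_{y,r},R^\varepsilon_{y,r})$ (of width $O(\varepsilon)$), we insert an extra brief flow step with $\ell=1$, of duration $O(\varepsilon)$, that pushes $S$ slightly above $R^\varepsilon_{y,r}$ before switching to $\ell^*=\lmax$; this reduces to the first case, and the extra $O(\varepsilon)$ time is easily absorbed in the surplus $\delta_0$.

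The tube probability factors as (prob.\ of the prescribed jump phase) $\times$ (prob.\ of no jump during the flow phase). The flow phase has duration at most $t_{\min}+O(\varepsilon)$, so the no-jump probability is at least $\exp(-(\barmu+D)\lmax(t_{\min}+1))$, since the total jump rate is bounded above by $(\barmu+D)\lmax$. For the jump phase we partition the time window $[0,\delta_0]$ into $\lmax-1$ equal subintervals and prescribe exactly one jump of the appropriate type (division or washout) in each subinterval required to bring $X$ to $\ell^*$. The probability of this event is bounded below by a product of finitely many explicit terms, each involving uniform upper and lower bounds on $\mu(S_u)$ (available because $S_u$ remains in a compact subset of $(0,\Sin)$ away from both endpoints, thanks to the bound on $\varepsilon$; cf.~Lemma~\ref{lemme.control.phi_moins_s} and Remark~\ref{phis0.less.sin}), on $X_u\ge 1$, and on the fixed subinterval length $\delta_0/(\lmax-1)$. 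Multiplying these finitely many uniform factors yields a constant $C_1^{\tau_0}>0$ depending only on $K$ and $\tau_0$.

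The main obstacle is the case analysis, in particular the intermediate range and the degenerate case $r=\bar s_y$, where $r^\varepsilon_{y,r}=R^\varepsilon_{y,r}=\bar s_y$ is an equilibrium the flow cannot literally attain. The two-point definition of $\mathcal{E}_{y,r}^\varepsilon$ built from $\phis(y,r,\varepsilon/3)$ and $\phis(y,r,\varepsilon/4)$ is designed precisely so that the process only needs to hit a perturbed target at strictly positive distance from $\bar s_y$. A secondary bookkeeping task is verifying, using the quantitative bound on $\varepsilon$, that the substrate stays in $[\bar s_{\lmax},\bar s_1]$ throughout the candidate trajectory, which is what gives the uniform positive lower bounds on $\mu(S_u)$.
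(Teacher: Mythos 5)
Your proposal follows essentially the same strategy as the paper's (case analysis on the position of $s$ relative to $R^\varepsilon_{y,r}$ and $r^\varepsilon_{y,r}$, a jump phase of length $\delta_0=(\tau_0-t_{\min})/2$ followed by a flow phase, Poisson-random-measure lower bounds for the prescribed jump sequence as in Lemma~\ref{prop.minore.proba.evt}), and you correctly identify the key structural role of the $\varepsilon$ bound and $t_{\min}$. However, several of the quantitative intermediate claims are wrong, and they happen to hit exactly the bookkeeping that is the substance of the lemma.

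First, $\SminK\le R^\varepsilon_{y,r}$ (and symmetrically $\SmaxK\ge r^\varepsilon_{y,r}$) can fail. When $r<\bar s_y$, both $\phis(y,r,\varepsilon/3)$ and $\phis(y,r,\varepsilon/4)$ lie strictly below $r$, so $R^\varepsilon_{y,r}<r$ and, for $r$ near $\SminK$, $R^\varepsilon_{y,r}<\SminK$. Your membership conclusion $(\lmax,R^\varepsilon_{y,r})\in\mathcal{E}_{y,r}^\varepsilon$ is still true, but the correct justification is that the bound on $\varepsilon$ forces $R^\varepsilon_{y,r}\ge\texttt{s}>\bar s_\lmax$ with $\texttt{s}:=\SminK-\frac{(\SminK-\bar s_\lmax)D\delta_0}{1+D\delta_0}$; this constant, sitting strictly between $\bar s_\lmax$ and $\SminK$, is what the numerology in the hypothesis on $\varepsilon$ produces, and your proposal never introduces it.

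Second, and more consequentially, the estimate "the flow phase has duration at most $t_{\min}+O(\varepsilon)$" is incorrect. Since $R^\varepsilon_{y,r}$ may be as small as $\texttt{s}$ and the post-jump substrate value may be arbitrarily close to $\bar s_1$, the relevant time bound is $\phit(\lmax,\bar s_1,\texttt{s})\le t_{\min}+\delta_0$, a quantity that is $t_{\min}$ plus a fixed constant (not $O(\varepsilon)$). That this exactly equals the remaining budget $\tau_0-\delta_0$ is the heart of the lemma, and you skip the computation entirely; as written, if one takes your "$t_{\min}+O(\varepsilon)$" at face value, the remaining $\delta_0$ of slack appears unused, which masks the fact that there is actually no slack at all. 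Relatedly, your claim that the bound on $\varepsilon$ controls "how far the substrate drifts during the jump phase" is misplaced: that drift is $\delta_0\max\{D\Sin,k\barmu\lmax\}$, governed by $\delta_0$ and not by $\varepsilon$; the $\varepsilon$ bound controls only where $R^\varepsilon_{y,r},r^\varepsilon_{y,r}$ land, which is a different (though complementary) constraint.

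Third, you do not address what happens if the jump-phase drift overshoots the target substrate level (e.g.\ $S$ falls below $R^\varepsilon_{y,r}$ before the jump phase finishes). The paper handles this through the specific structure of $\mathcal{E}_{y,r}^\varepsilon$ (equivalently of the boundary $B(L,[\Smin,\Smax])$ in Lemma~\ref{lemma.reach.T.Smin.Smax}): when $S$ crosses $R^\varepsilon_{y,r}$ downward during a jump phase, the current bacterial count $\ell$ necessarily satisfies $\bar s_\ell\le R^\varepsilon_{y,r}$, so $(\ell,R^\varepsilon_{y,r})\in\mathcal{E}_{y,r}^\varepsilon$ and the hitting time has already been attained. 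Your "tube" picture, which requires the trajectory to follow the prescribed path all the way, does not exploit this automatic-hitting feature; without it, the argument would need the jump phase to stay on the correct side of the target, and as noted you have not established that. The paper also avoids the need for a separate third case (intermediate range) argument of the kind you sketch by deriving the intermediate case in Lemma~\ref{lemma.reach.T.Smin.Smax} from the two extreme cases via an exit-then-re-enter decomposition; your "push $S$ slightly above $R^\varepsilon_{y,r}$" step would similarly need a more careful account of the jumps needed to get $X$ down to $1$ first, and the extra time this consumes, before the claim that the cost is "$O(\varepsilon)$" can be accepted.

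In short: the strategy is the right one and matches the paper's, but the specific inequalities that make the time budget close are exactly the ones that are stated incorrectly or not stated at all; the paper makes them rigorous by routing through Lemma~\ref{lemma.reach.T.Smin.Smax} with jump-phase durations depending on $|\bar s_1-s|$ and $|\bar s_\lmax-s|$, and by using the self-hitting structure of $\mathcal{E}_{y,r}^\varepsilon$.
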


\medskip

\begin{lemma}
\label{lemma.stay.in.S}
Let $0<\varepsilon\leq \frac{3\,\min\{\SminK-\bar s_{\lmax},\, \bar s_1-\SmaxK\}}{\max\{D\,\Sin,\, k\,\barmu\,\lmax\}}$, $\delta>0$ and $T>0$. Then there exists $C^{\varepsilon,\delta,T}_2>0$, such that, for all $(y,r)\in K$ satisfying $|r-\bar s_y|>\delta$, for all $(x,s) \in \mathcal{E}_{y,r}^\varepsilon$,
\[
\PP_{(x,s)}
		\left((X_t,S_t) \in \mathcal{S}_{y,r}^\varepsilon, \, 
				\forall t\in[0,T]\right)\geq C^{\varepsilon,\delta,T}_2\,.
\]
\end{lemma}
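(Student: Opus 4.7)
The plan is to exhibit an explicit event, of probability bounded below by a constant depending only on $\varepsilon,\delta,T$, under which $(X_t,S_t)$ remains in the narrow window $\mathcal{S}_{y,r}^\varepsilon$ throughout $[0,T]$. Since the substrate follows a deterministic ODE between jumps and the equilibrium $\bar s_\ell$ depends on the bacteria count, one can steer $S$ upward or downward at will by triggering divisions or washouts, provided the current count $\ell \in \setunlmax$ places $\bar s_\ell$ on the desired side of the window.

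First, I would establish a uniform lower bound $R^\varepsilon_{y,r} - r^\varepsilon_{y,r} \geq c(\varepsilon,\delta)>0$ for all $(y,r)\in K$ satisfying $|r-\bar s_y|>\delta$. Since $t\mapsto\phis(y,r,t)$ has derivative of magnitude $1/|D(\Sin-r) - k\mu(r)y|$ at $t=0$ and the denominator stays bounded away from zero by continuity and the $\mathcal{C}^1$ regularity of $\mu$ on the compact, this follows from a mean value argument applied on $[\varepsilon/4,\varepsilon/3]$. Letting $V_{\max}:=\max\{D\Sin, k\barmu\lmax\}$ and $\Lambda := (\barmu+D)\lmax$ denote uniform upper bounds for the flow speed and the total jump rate on $\setunlmax\times[\bar s_{\lmax},\bar s_1]$, the flow with any fixed $\ell\in\setunlmax$ needs time at least $\tau_1:=c(\varepsilon,\delta)/V_{\max}>0$ to cross the window.

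Second, for $(x,s)\in\mathcal{E}_{y,r}^\varepsilon$ the deterministic flow with $x$ bacteria moves into the window by the very definition of $\mathcal{E}_{y,r}^\varepsilon$, so by the previous step it cannot leave before time $\tau_1$. Hence on the event ``no jump in $[0,\tau_1]$'', whose probability is at least $e^{-\Lambda\tau_1}$, the process stays in $\mathcal{S}_{y,r}^\varepsilon$. This settles the case $T\leq \tau_1$. For general $T$, I would iterate over $N:=\lceil T/\tau_1\rceil$ segments, forcing at the end of each segment, in a short window of length $\eta$, a specific sequence of jumps that resets the bacteria count to a value $x'$ with $\bar s_{x'}$ on the opposite side of the window: washouts down to $x'=1$ (so that $\bar s_1>\SmaxK\geq R^\varepsilon_{y,r}$, pushing $S$ up) or divisions up to $x'=\lmax$ (so that $\bar s_{\lmax}<\SminK\leq r^\varepsilon_{y,r}$, pushing $S$ down). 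The required side is always available thanks to the ordering $\bar s_{\lmax}<\SminK \leq r^\varepsilon_{y,r} < R^\varepsilon_{y,r}\leq \SmaxK <\bar s_1$, which in turn comes from the definition of $\lmax$. Each prescribed jump has probability at least a positive constant times $\eta$ (with at most $\lmax-1$ such jumps per reset), and the product over the $N$ segments yields the bound $C_2^{\varepsilon,\delta,T}>0$.

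The main obstacle is obtaining a uniform per-cycle probability: after each reset one must verify that the new configuration $(x',s'')$ sits in a position where the flow with $x'$ bacteria immediately moves back into the window and does not leave before time $\tau_1$. A short case analysis, exploiting the monotonicity properties of $\phi$ recalled in Lemma~\ref{lem:rapprochement}, handles this. The technical book-keeping---checking that the $O(\lmax)$ jumps triggered during the short window of length $\eta$ suffice to swap the flow direction while $S_t$ does not yet escape $\mathcal{S}_{y,r}^\varepsilon$---is what will occupy the bulk of the appendix proof.
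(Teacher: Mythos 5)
Your plan captures the essential idea---oscillate the bacteria count so that the flow alternately pushes the substrate up and down, confining it to the narrow band---and would indeed work after the ``book-keeping'' you defer. It differs from the paper's argument in a concrete and interesting way: the paper first reduces (via the preliminary width bound) to the general Lemma~\ref{lemma.stay.in.S.general} and then splits into two cases according to whether some $\bar s_\ell$ with $\ell\in\setunL$ lies \emph{inside} $[\Smin,\Smax]$. If yes, one cascades to that $\ell$ and then does nothing: the flow is attracted to $\bar s_\ell$ and stays in the band forever, with \emph{no} iteration needed. If no, the paper identifies the unique transition index $\ell$ with $\bar s_\ell>\Smax$ and $\bar s_{\ell+1}<\Smin$ and ping-pongs between the \emph{adjacent} counts $\ell,\ell+1$, requiring only a single birth or death per direction change. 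You instead always ping-pong between the \emph{extreme} counts $1$ and $\lmax$, which unifies the two cases (you never need to know $\ell$) at the cost of roughly $\lmax$ jumps per cycle instead of one, so your constant $C_2^{\varepsilon,\delta,T}$ is worse but still positive. Both are valid; the paper's route is more economical, yours is structurally simpler.

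Two factual slips in your write-up, neither of which breaks the argument but both worth fixing. First, the derivative of $t\mapsto\phis(y,r,t)$ at $t=0$ has magnitude $|D(\Sin-r)-k\mu(r)y|$, \emph{not} its reciprocal: for $\eta$ small one has $\phis(y,r,\eta)\approx r-\eta\,[D(\Sin-r)-k\mu(r)y]$, so the rate is small (not large) when $r$ is close to $\bar s_y$. The conclusion---that the width of $\ens{\phis(y,r,\varepsilon/3)}{\phis(y,r,\varepsilon/4)}$ is bounded below uniformly in $(y,r)$ with $|r-\bar s_y|>\delta$---is nevertheless correct and is precisely what the paper establishes via Lemma~\ref{lemme.control.phi_moins_s}-\ref{lemme.control.phi_moins_s.min}, giving $\beta=D(D\varepsilon/4+1)\,\delta\,\varepsilon/12$. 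Second, the chain $\SminK\leq r^\varepsilon_{y,r}<R^\varepsilon_{y,r}\leq\SmaxK$ you invoke is not guaranteed: the hypothesis on $\varepsilon$ only ensures $\bar s_{\lmax}\leq r^\varepsilon_{y,r}$ and $R^\varepsilon_{y,r}\leq\bar s_1$ (see Remark~\ref{phis0.less.sin}), and with $r$ near $\SminK$ or $\SmaxK$ one can have $r^\varepsilon_{y,r}<\SminK$ or $R^\varepsilon_{y,r}>\SmaxK$. Fortunately your ping-pong only needs the weaker inclusion $[r^\varepsilon_{y,r},R^\varepsilon_{y,r}]\subset[\bar s_{\lmax},\bar s_1]$, since that is what guarantees the flow with $\lmax$ bacteria pushes $S$ downward and the flow with $1$ bacterium pushes it upward throughout the band, so the argument survives once the claimed ordering is corrected.
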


\medskip

\begin{lemma}
\label{lemma.Tys.depuis.S}
Let $0<\varepsilon\leq \frac{3\,\min\{\SminK-\bar s_{\lmax},\, \bar s_1-\SmaxK\}}{\max\{D\,\Sin,\, k\,\barmu\,\lmax\}}$ and $\delta>0$. Then there exists $C^{\varepsilon,\delta}_3>0$, such that, for all $(y,r)\in K$ satisfying $|r-\bar s_y|>\delta$, for all $(x,s) \in \mathcal{S}_{y,r}^\varepsilon$,
\[
\PP_{(x,s)}
	\Big(T_{y,r} \leq \varepsilon \Big)\geq  
	C^{\varepsilon,\delta}_3
\]
with $T_{y,r}:=\inf\{t\geq 0, \, (X_t,S_t)=(y,r)\}$ the first hitting time of $(y,r)$.
\end{lemma}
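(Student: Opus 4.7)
The plan is to force an explicit two-phase trajectory from $(x,s)\in\mathcal{S}_{y,r}^\varepsilon$ to $(y,r)$ in time at most $\varepsilon$, and then bound the probability of that trajectory from below. In \emph{Phase~1}, the process performs in a short initial window $[0,h]$ exactly the $m:=|y-x|$ population jumps required to move from $x$ to $y$ (all births if $y>x$, all deaths if $y<x$), and no other jump. In \emph{Phase~2}, no jump occurs until the continuous flow with $y$ individuals carries the substrate through $r$.

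I would first verify that such a trajectory is geometrically possible. Fix $h\in(0,\varepsilon/12)$. On the compact set $\setunlmax\times[\bar s_{\lmax},\bar s_1]$, which contains every state reachable during Phase~1, the substrate drift appearing in \eqref{eq:substrate} is bounded by some constant $M$, so the post-jump substrate $s_h$ satisfies $|s_h-s|\leq Mh$. Because $s\in\ens{\phis(y,r,\varepsilon/3)}{\phis(y,r,\varepsilon/4)}$, the monotony properties of the flow (Lemma~\ref{lem:rapprochement}) give $\phit(y,s,r)\in[\varepsilon/4,\varepsilon/3]$. The assumption $|r-\bar s_y|>\delta$ keeps the drift $D(\Sin-r)-k\mu(r)y$ of the flow at the level $r$ bounded away from zero, so $\phit(y,\cdot,r)$ is Lipschitz near $s$ with a constant depending only on $\delta$ and $K$. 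Taking $h$ small enough forces $\phit(y,s_h,r)\in[\varepsilon/6,\varepsilon/2]$, hence the prescribed trajectory reaches $(y,r)$ at the deterministic time $h+\phit(y,s_h,r)\leq\varepsilon$.

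Second, I would estimate the probability of this prescribed trajectory. On the compact set above, the birth rate $\mu(s')x'$ and the death rate $Dx'$ are both bounded above and below by positive constants. Standard piecewise deterministic computations then yield: (i)~the probability of Phase~1, namely $m$ jumps in $[0,h]$ of the prescribed types in the correct order and no other, is bounded below by a positive quantity of order $h^m/m!$; (ii)~the probability of Phase~2, no jump during a time interval of length at most $\varepsilon/2$, is bounded below by $\exp(-\Lambda\varepsilon)$, where $\Lambda$ is the maximum total jump rate on that compact set. Since both bounds depend only on $\varepsilon$, $\delta$ and $K$ (through $\lmax$, $\SminK$, $\SmaxK$), multiplying them gives the announced uniform constant $C_3^{\varepsilon,\delta}>0$.

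The main obstacle is the quantitative continuity argument in Phase~1: one must control how $\phit(y,s_h,r)$ depends on the perturbation $s_h-s$ caused by the $m$ rapidly occurring jumps. This is precisely where the hypothesis $|r-\bar s_y|>\delta$ is essential: without it the flow $\phi(y,\cdot,\cdot)$ would approach $r$ only asymptotically, so $\phit(y,s_h,r)$ could blow up as $s_h$ drifts toward $\bar s_y$, ruining any uniform lower bound independent of $(y,r)$.
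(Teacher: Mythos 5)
Your proposal is correct and matches the paper's approach: the paper derives this lemma from Lemma~\ref{reach.y_s_general}, whose proof is precisely your two-phase decomposition (perform $|y-x|$ jumps of the right sign in a short window, then forbid any further jump until the $y$-particle flow reaches $r$), with Lemma~\ref{prop.minore.proba.evt} supplying the explicit lower bound of order $h^m/m!$ that you invoke informally. The paper's choice of $t^\star$ plays the role of your ``$h$ small enough'' and is pinned down by bounding the Phase~1 substrate displacement against $|r-\phis(y,r,\varepsilon/4)|$ and $|\phis(y,r,\varepsilon/2)-\phis(y,r,\varepsilon/3)|$ via Lemma~\ref{lemme.control.phi_moins_s}; your Lipschitz estimate on $\phit(y,\cdot,r)$ is an equivalent way to achieve the same control, provided you also make explicit (as the paper does) that the Phase~1 displacement must remain strictly smaller than $|s-r|\geq D\delta\varepsilon/4$ so that $s_h$ does not cross $r$ and $\phit(y,s_h,r)$ stays finite --- a point your closing paragraph gestures at but whose quantitative form is exactly where the hypothesis $|r-\bar s_y|>\delta$ enters both proofs.
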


\bigskip

Even not optimal, some explicit expressions of  $C_1^{\tau_0}$, $C^{\varepsilon,\delta,T}_2$, $C^{\varepsilon,\delta}_3$ of the previous lemmas are given in Appendix~\ref{sec.proofs.lemmas}.
Let us show below that they imply the conclusion of Proposition~\ref{prop.partout.a.partout}.
\begin{align}
\nonumber
& \PP_{(x,s)}(\tau-\varepsilon \leq \tilde T_{y,r} \leq \tau)
\\\nonumber
	& \qquad\qquad \geq \PP_{(x,s)}\Big(\{T_{\mathcal{E}_{y,r}^\varepsilon}\leq \tau_0\} 
				\cap \big\{(X_t,S_t) \in \mathcal{S}_{y,r}^\varepsilon, \, 
				\forall t\in[T_{\mathcal{E}_{y,r}^\varepsilon}, \, \tau-\varepsilon]\big\}
\\\nonumber
	&\qquad\qquad\qquad\qquad\qquad\qquad
				\cap \{\tau-\varepsilon \leq \tilde T_{y,r} \leq \tau\}\Big)
\\\nonumber
	&\qquad\qquad\geq \PP_{(x,s)}\big(T_{\mathcal{E}_{y,r}^\varepsilon}\leq \tau_0\big)\,
\\\nonumber
	&\qquad\qquad\quad
		\times
		\PP_{(x,s)}
		\Big((X_t,S_t) \in \mathcal{S}_{y,r}^\varepsilon, \, 
				\forall t\in[T_{\mathcal{E}_{y,r}^\varepsilon}, \, \tau-\varepsilon] \ | \ T_{\mathcal{E}_{y,r}^\varepsilon}\leq \tau_0 \Big)
\\\label{probability.decomposition}
	&\qquad\qquad\quad
		\times \PP_{(x,s)}
				\Big(\tau-\varepsilon \leq \tilde T_{y,r} \leq \tau \ | \ T_{\mathcal{E}_{y,r}^\varepsilon}\leq \tau_0, \, 
				(X_t,S_t) \in \mathcal{S}_{y,r}^\varepsilon, \, 
				\forall t\in[T_{\mathcal{E}_{y,r}^\varepsilon}, \, \tau-\varepsilon] \Big)\,.
\end{align}

By Lemma ~\ref{de.partout.a.presque.partout}, the first probability of the last member of \eqref{probability.decomposition} is bounded from below by a constant $C_1^{\tau_0}>0$. By Lemma~\ref{lemma.stay.in.S} and the Markov property the second probability is bounded from below by a constant $C^{\varepsilon,\delta,\tau}_2>0$. By definition,  $\tilde T_{y,r} \geq \tau_0$, moreover $(y,r) \notin \mathcal{S}_{y,r}^\varepsilon$, therefore on the event ${\{(X_0,S_0)=(x,s), \, T_{\mathcal{E}_{y,r}^\varepsilon}\leq \tau_0, \, (X_t,S_t) \in \mathcal{S}_{y,r}^\varepsilon, \,  \forall t\in[T_{\mathcal{E}_{y,r}^\varepsilon}, \, \tau-\varepsilon]\}}$ we have $\tilde T_{y,r}\geq \tau-\varepsilon$ almost surely. By Lemma~\ref{lemma.Tys.depuis.S} and Markov property, the third probability is bounded from below by a constant $C^{\varepsilon,\delta}_3>0$, which achieves the proof.
\end{proof}

\subsection{Proof of the sufficient conditions leading to Theorem~\ref{th:main-qsd}}
\label{sec:QS.behavior}

We prove in this section that the three conditions -- Bounds on Lyapunov functions \eqref{tag.BLF1} and \eqref{tag.BLF2};  Small set assertion \eqref{tag:SSA} and Mass ratio inequality \eqref{tag:MRI} -- hold. As it was proved in Section~\ref{sec:sufficient.conditions} that they imply Theorem~\ref{th:main-qsd}, it will conclude the proof of this theorem.

Bounds on Lyapunov functions \eqref{tag.BLF1} and \eqref{tag.BLF2} are given by Lemma~\ref{lem:lyapunov}; Small set assertion \eqref{tag:SSA} is given by Lemma~\ref{lem:smallset}; Mass ratio inequality\eqref{tag:MRI} is given by Lemma~\ref{lem:ratio}. 

\subsubsection{Bounds on Lyapunov functions}
\label{sec:lyapunov}

Let $\tilde V(x,s)=V(x,s)\,\1_{(x,s)\in \mathbb{N}^*\times(0,\bar s_1)}$ for all $(x,s)\in (\mathbb{N}^*\times(0,\bar s_1)) \cup (\{0\}\times (0,\Sin))$, where we recall that $V$ is defined on $\mathbb{N}^*\times(0,\bar s_1)$ by \eqref{def:psi_V}.
Assumptions \eqref{hyp:theta_p} and a simple computation lead to the following lemma, whose the proof is postponed in Appendix (see Section~\ref{sec:proof.lem.LV}). 

\begin{lemma}
\label{lem:LV}
There exists $\eta>D$ and $\zeta>0$ such that 
\[
\mathcal{L} \tilde V \leq -\eta \tilde V + \zeta \psi,
\]
on $(\mathbb{N}^*\times(0,\bar s_1)) \cup (\{0\}\times (0,\Sin))$, where $\mathcal{L}$ is the infinitesimal generator of $(X_t,S_t)_{t\geq 0}$ defined by \eqref{eq:generator}.
\end{lemma}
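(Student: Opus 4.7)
The plan is to exploit linearity of $\mathcal{L}$ and decompose $V = V_1 + V_2 + V_3$ according to the three summands of \eqref{def:psi_V}, namely $V_1(x,s) = \rho^x e^{\alpha s}/\log\rho$, $V_2(x,s) = 1/s$ and $V_3(x,s) = (1 + \theta\mathbf{1}_{x\leq 1})/(\bar s_1 - s)^p$. I would bound each $\mathcal{L}\tilde V_i$ by $-\eta \tilde V_i$ plus an error that is either absorbable into $\zeta \psi$ (since $\psi \geq 1$ on $\mathbb{N}^*\times(0,\bar s_1)$) or dominated by the very negative contribution of one of the other terms. Each of $V_1$, $V_2$, $V_3$ is the dominant one in a different region: $V_1$ controls large $x$, $V_2$ controls $s \to 0$, and $V_3$ controls $s \to \bar s_1$.

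For $V_1$, the identities $V_1(x\pm 1,s) - V_1(x,s) = (\rho^{\pm 1} - 1)V_1(x,s)$ and $\partial_s V_1 = \alpha V_1$ yield directly
\[
\mathcal{L}V_1(x,s) = V_1(x,s)\bigl\{\alpha D(\Sin - s) + \mu(s)\,x\,[(\rho-1)-\alpha k] - D x(1 - \rho^{-1})\bigr\}.
\]
The hypothesis $\alpha \geq (\rho-1)/k$ kills the middle bracket, leaving $\mathcal{L}V_1 \leq V_1[\alpha D\Sin - Dx(1-\rho^{-1})]$, which is $\leq -2\eta V_1$ for $x$ above some threshold $X_*$ (taking $\eta$ arbitrarily large) and bounded by a constant for $x < X_*$ (hence $\leq C\psi$). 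The extra $-DV_1(1,s)$ coming from the extinction jump at $x=1$ only strengthens the bound. For $V_2$ the jump terms vanish and $\mathcal{L}V_2 = -D\Sin/s^2 + D/s + k\mu(s)x/s^2$; using $\mu(s)/s \leq M$ on $(0,\bar s_1]$ (since $\mu\in\mathcal{C}^1$ and $\mu(0)=0$), the quantity $\mathcal{L}V_2 + \eta V_2$ becomes a downward-opening quadratic in $1/s$ whose maximum is $O(x^2)$. This is absorbed into $\zeta\psi$ when $x$ stays bounded, and into the exponential negativity of $\mathcal{L}V_1$ when $x$ is large.

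The delicate piece will be $V_3$. For $x \geq 2$, $F_x(\bar s_1) := D(\Sin - \bar s_1) - k\mu(\bar s_1) x = k\mu(\bar s_1)(1-x) < 0$, so the flow term $pF_x(s)/(\bar s_1 - s)^{p+1}$ diverges to $-\infty$ as $s\to\bar s_1$ and crushes the bounded jump contribution $2D\theta/(\bar s_1 - s)^p$ at $x=2$; on the complement the terms are uniformly bounded. The main obstacle is $x=1$, where $F_1(\bar s_1) = 0$. A direct computation, using $\tilde V_3(0,s)=0$, gives
\[
\mathcal{L}\tilde V_3(1,s) = \frac{(1+\theta)\,p\,F_1(s)}{(\bar s_1 - s)^{p+1}} - \frac{\theta\mu(s) + (1+\theta)D}{(\bar s_1 - s)^p}.
\]
Writing $F_1(s) = (D + k\mu'(\hat s))(\bar s_1 - s)$ by the mean value theorem with $\hat s \in (s,\bar s_1)$, and using continuity of $\mu'$ at $\bar s_1$, one obtains
\[
\mathcal{L}\tilde V_3(1,s) + \eta\, V_3(1,s) \;=\; \frac{(1+\theta)\bigl[p(D + k\mu'(\bar s_1)) + \eta - D\bigr] - \theta\mu(\bar s_1) + o(1)}{(\bar s_1 - s)^p}
\]
as $s\to\bar s_1$. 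The required negativity is precisely $\frac{\theta}{1+\theta}\mu(\bar s_1) > p(D + k\mu'(\bar s_1)) + D$, which after rearrangement is exactly the hypothesis \eqref{hyp:theta_p} on $\theta$; this leaves strict room to take $\eta > D$.

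Combining the three estimates, I would pick $\eta \in \bigl(D,\,D + \frac{\theta}{1+\theta}\mu(\bar s_1) - p(D + k\mu'(\bar s_1))\bigr)$; the bound then holds in a neighborhood of the boundary, and on the complement of that neighborhood all quantities are bounded on compacts of $(0,\bar s_1)$ so the residual error is absorbed in $\zeta\psi$. The main obstacle throughout is the $x=1$ estimate near $\bar s_1$: this is exactly the point where the flow term $F_1$ vanishes, where the extinction jump activates, and where the parameter $\theta$ (and hence the piecewise definition $a(x) = 1 + \theta\mathbf{1}_{x\leq 1}$) is needed to tip the balance; the specific form of \eqref{hyp:theta_p} is calibrated to make $\eta > D$ feasible, which is in turn what will eventually yield the key contraction in \eqref{tag.BLF1}. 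For the locally Lipschitz extension (Remark~\ref{remark:mu.lipschitz}), one replaces $\mu'(\bar s_1)$ by any local Lipschitz constant $\klip$ in the expansion of $F_1$, and the argument goes through verbatim.
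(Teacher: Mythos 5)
Your proof is correct and follows the same decomposition and line of reasoning as the paper: the paper's $V_0,V_1,V_2$ are your $V_1,V_2,V_3$, and the three estimates (the $\alpha\geq(\rho-1)/k$ cancellation for the exponential piece, the quadratic-in-$1/s$ control using $\mu(s)/s\leq M$ for the $1/s$ piece, and the $x=1$ boundary analysis near $\bar s_1$ driven by \eqref{hyp:theta_p} for the $(\bar s_1-s)^{-p}$ piece) are the same calculations. The one cosmetic difference is how the cross-term $k\mu(s)x/s^2$ in $\mathcal{L}(1/s)$ is handled: the paper transfers a piece $\varepsilon x V_0$ of the exponential negativity into the $1/s$ estimate, whereas you bound the resulting downward parabola by its $O(x^2)$ vertex and then note that this is crushed for large $x$ by the $-Dx(1-\rho^{-1})V_0$ term; both are the same idea. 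One small over-statement: since you compute $\mathcal{L}\tilde V_3(1,s)$ rather than $\mathcal{L}V_3(1,s)$, the extinction jump contributes an extra $-D\tilde V_3(1,s)$, so the asymptotic constraint your calculation actually requires is $\frac{\theta}{1+\theta}\mu(\bar s_1) > p\bigl(D+k\mu'(\bar s_1)\bigr)$, strictly weaker than \eqref{hyp:theta_p}; the paper's \eqref{hyp:theta_p} with the extra $+D$ is the exact threshold only in its own formulation, which works with $V$ (naturally extended to $x=0$) and simply observes $\mathcal{L}\tilde V \leq \mathcal{L}V$. This is a harmless mismatch since \eqref{hyp:theta_p} more than suffices for your version.
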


Using well known martingale properties associated to Crump-Young model, the drift condition exposed in Lemma~\ref{lem:LV} before extends to the following lemma.

\begin{lemma}
\label{lem:lyapunov}
There exists $\eta>D$ and $\zeta>0$, such that for all $t\geq 0$,  $x\in \mathbb{N}^*$ and $s\in (0,\bar s_1)$, we have 
\begin{equation}
\label{eq:encadrepsi}
  e^{-D t}\, x = e^{-D t} \psi(x,s) 
  \leq \mathbb{E}_{(x,s)}[\psi(X_t, S_t)] 
  \leq e^{(\mu(\bar s_1)-D) t}\, \psi(x,s)= e^{(\mu(\bar s_1)-D) t} \,x
\end{equation}
and 
\begin{equation}
\label{eq:V}
\mathbb{E}_{(x,s)}\left[ V(X_t,S_t) \, \1_{X_t\neq 0}\right] \leq e^{-\eta  t} V(x,s) + \zeta \frac{e^{(\mu(\bar s_1)-D)t}}{\eta-D} \psi(x,s)\,.
\end{equation}
\end{lemma}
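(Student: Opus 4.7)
My plan is to derive both inequalities from drift conditions on the generator $\mathcal{L}$, using Dynkin's formula (with a standard localization argument) and Gronwall.

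For the bounds on $\psi$ in \eqref{eq:encadrepsi}, I would first compute
\[
\mathcal{L}\psi(x,s) = \mu(s)\,x\,[(x+1)-x] + D\,x\,[(x-1)-x] = (\mu(s)-D)\,\psi(x,s).
\]
Since $\mu$ is increasing and the process stays in $\mathbb{N}^*\times(0,\bar s_1)$ before extinction, this gives the two-sided bound $-D\,\psi \leq \mathcal{L}\psi \leq (\mu(\bar s_1)-D)\,\psi$ on the non-absorbing part, and $\mathcal{L}\psi \equiv 0$ on $\{0\}\times(0,\Sin)$. Using the fact that $(X_t)$ is dominated by a pure birth process of rate $\mu(\bar s_1)$ (which furnishes the required integrability and justifies the localization), a standard Dynkin argument yields two ODE inequalities for $g(t):=\mathbb{E}_{(x,s)}[\psi(X_t,S_t)]$, namely $-D\,g(t)\leq g'(t) \leq (\mu(\bar s_1)-D)\,g(t)$, from which integration gives \eqref{eq:encadrepsi}.

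For \eqref{eq:V}, I would apply Lemma~\ref{lem:LV} together with Dynkin's formula to the extended function $\tilde V$ (the extension by $0$ on the absorbing set is harmless because both sides of the drift inequality vanish there). Setting $f(t):=\mathbb{E}_{(x,s)}[\tilde V(X_t,S_t)] = \mathbb{E}_{(x,s)}[V(X_t,S_t)\,\1_{X_t\neq 0}]$ and using the upper bound from \eqref{eq:encadrepsi}, the drift condition of Lemma~\ref{lem:LV} yields
\[
f(t) \leq V(x,s) - \eta\int_0^t f(u)\,\mathrm{d}u + \zeta\,\psi(x,s)\int_0^t e^{(\mu(\bar s_1)-D)u}\,\mathrm{d}u.
\]
An application of Gronwall's lemma then gives
\[
f(t) \leq e^{-\eta t}V(x,s) + \zeta\,\psi(x,s)\int_0^t e^{-\eta(t-u)}\,e^{(\mu(\bar s_1)-D)u}\,\mathrm{d}u,
\]
and an explicit evaluation of the last integral yields $\frac{e^{(\mu(\bar s_1)-D)t}-e^{-\eta t}}{\eta+\mu(\bar s_1)-D}\leq \frac{e^{(\mu(\bar s_1)-D)t}}{\eta-D}$ (since $\mu(\bar s_1)>0$ and $\eta>D$), giving exactly the constant $\zeta\,e^{(\mu(\bar s_1)-D)t}/(\eta-D)$ appearing in \eqref{eq:V}.

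The only delicate point is the rigorous justification of Dynkin's formula for $\tilde V$, because $V$ is unbounded in $x$ and blows up as $s\uparrow\bar s_1$ or $s\downarrow 0$. I would handle this by introducing the sequence of stopping times $\tau_n := \inf\{t\geq 0 : X_t\geq n \text{ or } S_t\notin[1/n,\bar s_1-1/n]\} \wedge \Text$, applying Dynkin's formula on $[0,t\wedge \tau_n]$ where $\tilde V$ is bounded and $\mathcal{L}\tilde V$ is integrable, and then passing to the limit using monotone/dominated convergence: the upper bound on the generator together with Gronwall applied at the level $f_n(t) := \mathbb{E}_{(x,s)}[\tilde V(X_{t\wedge\tau_n},S_{t\wedge\tau_n})]$ keeps $f_n$ uniformly bounded on compact time intervals by the target quantity, so Fatou lets one pass to the limit. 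This localization is the only technical obstacle; otherwise the argument is a direct computation from $\mathcal{L}\psi$ and from Lemma~\ref{lem:LV}.
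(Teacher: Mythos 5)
Your proposal is correct and follows the same overall strategy as the paper: compute $\mathcal{L}\psi=(\mu(s)-D)\psi$ to get the two-sided bound on $\psi$, invoke Lemma~\ref{lem:LV} for the drift on $\tilde V$, and justify the use of Dynkin's formula by localization (the paper uses $\mathcal{L}\tilde V\leq \zeta\tilde V$ and cites \cite[Section~6.2]{BCGM} and \cite[Theorem~2.1]{MTIII}; your stopping-time and Fatou argument is the same idea). The only genuine divergence is in how \eqref{eq:V} is extracted from the drift inequality. The paper observes that the single function $\tilde V - \tfrac{\zeta}{\eta-D}\psi$ satisfies $\mathcal{L}\bigl(\tilde V-\tfrac{\zeta}{\eta-D}\psi\bigr)\leq -\eta\bigl(\tilde V-\tfrac{\zeta}{\eta-D}\psi\bigr)$, which gives the exponential decay in one stroke and produces the exact constant $\tfrac{\zeta}{\eta-D}$ with no convolution integral to estimate; you instead apply Dynkin to $\tilde V$ alone, use the already-proved bound $\mathbb{E}_{(x,s)}[\psi(X_u,S_u)]\leq e^{(\mu(\bar s_1)-D)u}\psi(x,s)$, and then bound $\int_0^t e^{-\eta(t-u)}e^{(\mu(\bar s_1)-D)u}\,\dif u\leq \frac{e^{(\mu(\bar s_1)-D)t}}{\eta-D}$, using $\eta>D$ and $\mu(\bar s_1)>0$. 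Both give the stated constant. One small caveat: the passage from $f(t)\leq V(x,s)-\eta\int_0^t f(u)\,\dif u + \zeta\psi(x,s)\int_0^t e^{(\mu(\bar s_1)-D)u}\,\dif u$ to the final bound is not, strictly speaking, an application of the standard integral Gronwall lemma (which assumes a nonnegative coefficient); the integral inequality alone would not force pointwise comparison. What saves your argument (and the paper's) is that the martingale identity gives equality $f(t)=V(x,s)+\int_0^t\mathbb{E}_{(x,s)}[\mathcal{L}\tilde V(X_u,S_u)]\,\dif u$, so $f$ is absolutely continuous with $f'(t)\leq -\eta f(t)+\zeta e^{(\mu(\bar s_1)-D)t}\psi(x,s)$, and the integrating-factor argument then applies cleanly. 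If you state the Gronwall step via that differential form, your proof is complete and differs from the paper's only in this minor algebraic reorganization.
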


\begin{proof}
It is classical (see for example Section~4 of \cite{CF15}) that, for $f\in\mathcal{C}^{0,1}(\mathbb{N}\times \RR_+)$, the process
\begin{equation}
\label{eq:martingale}
\left( f(X_t,S_t) - f(X_0,S_0) - \int_0^t \mathcal{L} f(X_u,S_u) \dif u \right)_{t\geq 0}
\end{equation}
is a  local martingale.
As $\psi \leq \tilde V$ on $(\mathbb{N}^*\times(0,\bar s_1)) \cup( \{0\}\times (0,\Sin))$, from Lemma~\ref{lem:LV}, $\tilde V$ satisfies $\mathcal{L}\tilde V\leq \zeta \tilde V $ for some $\zeta>0$.
Then using classical stopping time arguments (see \cite[Section~6.2.]{BCGM} or \cite[Theorem 2.1]{MTIII} and its proof for instance), we can show that it is a martingale when $f=\tilde V$ and then that $(\mathbb{E}_{(x,s)}[\tilde V(X_t,S_t)])_{t\in [0,T]}$ is bounded for all $T>0$. Consequently, \eqref{eq:martingale} is also a martingale for $f=\psi$, because $\psi \leq \tilde V$. Then, from the dominated convergence theorem and the fact that, from the expression of $\psi$, 
\[
-D \psi \leq \mathcal{L} \psi \leq (\mu(\bar s_1) -D)\psi,
\]
we obtain \eqref{eq:encadrepsi}.  Similarly, by the linearity of $\mathcal{L}$, from Lemma~\ref{lem:LV} and \eqref{eq:encadrepsi},
\[
	\mathcal{L}\left(\tilde V-\frac{\zeta}{\eta-D}\psi\right)
	\leq -\eta \tilde V + \zeta \psi + \frac{\zeta}{\eta-D}D\psi = -\eta\left( \tilde V-\frac{\zeta}{\eta-D}\psi\right)
\]
then, for all $(x,s)\in \mathbb{N}^*\times (0,\bar s_1)$
\begin{align*}
\mathbb{E}_{(x,s)}\left[ V(X_t,S_t) \, \1_{X_t\neq 0}\right]
	&= \mathbb{E}_{(x,s)}\left[ \tilde V(X_t,S_t)\right]
\\
		&\leq e^{-\eta\,t}\tilde V(x,s) + \frac{\zeta}{\eta-D}\left(\mathbb{E}_{(x,s)}\left[ \psi(X_t,S_t)\right]-e^{-\eta\,t}\psi(x,s)\right)
\\
	&\leq
		e^{-\eta\,t} V(x,s)+\frac{\zeta}{\eta-D}\,e^{(\mu(\bar s_1)-D) t}\psi(x,s)\,.
\end{align*}
and \eqref{eq:V} holds.
\end{proof}

Let us point out some similarities between our approach and the proof of \cite[Theorem 4.1]{CMMS}. Indeed, to prove existence of the QSD, tightness is enough and is garanted by the use of Lyapunov functions (see for instance \cite[Theorem 4.2]{CMMS2011}).  However, the interest of our work is to go behind the existence of the QSD by proving uniqueness and convergence through sharp estimate on hitting times and new Harris Theorem.

\subsubsection{Small set assertion}
\label{sec:small.set}

Let $K$ be a compact set of $\NN^*\times (0,\bar s_1)$. Consistently with notation of Section~\ref{sect:irre}, let $s_K:=\min_{(x,s)\in K}s$ and $S_K:=\min_{(x,s)\in K}s$ be respectively the minimal and maximal substrate concentration of elements of $K$.

Our aim in this subsection is to prove the small set assertion \eqref{tag:SSA} established page~\pageref{tag:SSA} by introducing the coupling measure $\nu$. The proof is based on Lemma~\ref{lem:bertrand} below.

\begin{lemma}
\label{lem:bertrand}
Let $\tau>0$, let $0<s_0<s_K$, $s_1>s_0$ and $x\in \NN^*$. Then there exists $\epsilon_0>0$ such that, for all $(y,r)\in K$,
\[
	\PP_{(y,r)}\left((X_{\tau},S_{\tau})\in \{x\}\times [s_0,s_1]\right) \geq \epsilon_0\,.
\]
\end{lemma}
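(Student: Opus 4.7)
The plan is to route the process from any $(y,r) \in K$ to a pre-selected auxiliary point $(x, s^*)$ with $s^* \in (s_0, s_1) \cap (0, \bar s_1) \setminus \{\bar s_x\}$, arriving in a narrow window just before time $\tau$, and then to force a no-jump tail so that the deterministic flow carries the substrate through $[s_0, s_1]$ at exactly time $\tau$.

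First I pick $s^* \in (s_0, s_1) \cap (0, \bar s_1)$ with $s^* \neq \bar s_x$; such a choice exists because $0 < s_0 < s_K < \bar s_1$ guarantees that $(s_0, s_1) \cap (0, \bar s_1)$ contains a non-degenerate open interval and one only needs to avoid the single value $\bar s_x$. Set $\delta := \tfrac{1}{2}\lvert s^* - \bar s_x \rvert > 0$. Continuity of $t \mapsto \phi(x, s^*, t)$ at $t = 0$ furnishes $\varepsilon_1 > 0$ with $\phi(x, s^*, t) \in [s_0, s_1]$ for every $t \in [0, \varepsilon_1]$.

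Next I enlarge $K$ to the compact set $\tilde K := K \cup \{(x, s^*)\} \subset \NN^* \times (0, \bar s_1)$ and denote by $\tilde t_{\min}$ the associated time constant from Section~\ref{sect:irre}. In the main case $\tau > \tilde t_{\min}$, I pick $\tau_0 \in (\tilde t_{\min}, \tau)$ and $\varepsilon \in (0, \min\{\varepsilon_1, \tau - \tau_0\}]$. Proposition~\ref{prop.partout.a.partout} applied to $\tilde K$ with target $(x, s^*)$ (which satisfies $\lvert s^* - \bar s_x \rvert > \delta$) yields a constant $C > 0$ such that
\[
\PP_{(y, r)}\bigl(\tau - \varepsilon \leq \tilde T_{x, s^*} \leq \tau\bigr) \geq C
\]
for every $(y, r) \in K$, where $\tilde T_{x, s^*} = \inf\{t \geq \tau_0 : (X_t, S_t) = (x, s^*)\}$. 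By the strong Markov property at $\tilde T_{x, s^*}$ and the fact that the total jump rate on $\{x\} \times (0, \bar s_1)$ is bounded above by $R := (D + \barmu)\,x$, the probability that no jump occurs during the remaining interval $[\tilde T_{x, s^*}, \tau]$ (of length at most $\varepsilon$) is at least $e^{-R \varepsilon}$. On this event $X_\tau = x$ and $S_\tau = \phi(x, s^*, \tau - \tilde T_{x, s^*}) \in [s_0, s_1]$ by the choice of $\varepsilon_1$. Combining, $\epsilon_0 := C \cdot e^{-R \varepsilon} > 0$ is admissible.

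The main obstacle is the short-time regime $\tau \leq \tilde t_{\min}$, where Proposition~\ref{prop.partout.a.partout} is not directly applicable. Here I would replace it by an explicit trajectory construction on $[0, \tau]$: prescribe a finite sequence of division and washout jumps, together with the deterministic flow between them, that brings the process into a small neighborhood of $(x, s^*)$ at a time in $[\tau - \varepsilon, \tau]$, and then close with the same no-jump tail argument. Uniformity in $(y, r) \in K$ rests on compactness of $K$ and uniform positivity of jump rates on the compact region traversed, in the spirit of Lemmas~\ref{de.partout.a.presque.partout}--\ref{lemma.Tys.depuis.S}. The delicate point is scheduling the right number and types of jumps to drive the substrate close to $s^*$ within a time shorter than $\tilde t_{\min}$, typically by first boosting the population to a large $L$ with $\bar s_L < s_0$ to accelerate the substrate decrease, which yields a bound on $\epsilon_0$ that degrades (polynomially in $\tau$) but remains strictly positive.
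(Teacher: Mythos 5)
Your argument splits into two regimes, and only the first is actually proved. In the regime $\tau > \tilde t_{\min}$ the route is sound: you augment the compact set to include the target $(x,s^*)$ with $s^*\neq\bar s_x$, invoke Proposition~\ref{prop.partout.a.partout} to land at $(x,s^*)$ in a window $[\tau-\varepsilon,\tau]$, and close with a no-jump tail of probability at least $e^{-(D+\barmu)x\varepsilon}$, using continuity of $t\mapsto\phi(x,s^*,t)$ to keep the substrate in $[s_0,s_1]$. This differs from the paper's construction (which never invokes Proposition~\ref{prop.partout.a.partout} directly here), but it is a legitimate alternative for long times.

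The regime $\tau\leq\tilde t_{\min}$ is the entire difficulty, and you leave it as a sketch. The issue is not a technicality: Lemma~\ref{lem:bertrand} must hold for \emph{every} $\tau>0$, and your constant $\tilde t_{\min}$ cannot be shrunk by enlarging the compact set — indeed, increasing $\lmax$ makes $\phit(1,\bar s_{\lmax},\SmaxK)$ in \eqref{def:tmin} grow. So the short-time case is genuinely outside the reach of Proposition~\ref{prop.partout.a.partout}, as you acknowledge. You correctly name the key idea — boost the population to a large $L$ with $\bar s_L<s_0$ so that the substrate descends fast — but you stop short of the quantitative step that makes it work uniformly in $\tau$: the paper shows $\phit(L,\SmaxK,\frac{s_0+s_1}{2})\leq \frac{\SmaxK-\frac{s_0+s_1}{2}}{k\,\mu(\frac{s_0+s_1}{2})\,L-D(\Sin-\frac{s_0+s_1}{2})}\to 0$ as $L\to\infty$, so $L$ can be chosen with $\phit(L,\SmaxK,\frac{s_0+s_1}{2})<\tau$ no matter how small $\tau$ is, after which Lemmas~\ref{lemma.reach.T.Smin.Smax} and~\ref{lemma.stay.in.S.general} give positive, uniform-in-$(y,r)$ bounds. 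This observation dissolves the case split entirely; the paper's proof is a single construction.

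A further concern with your short-time plan is that you insist on landing at the singleton $(x,s^*)$. After boosting the population to $L\gg x$ to accelerate the substrate decrease, the substrate will overshoot below $s^*$ (since $\bar s_L<s_0<s^*$), so you would still need a controlled return path: kill off $L-x$ individuals and wait for the substrate to recover to $s^*$ with exactly $x$ individuals. This recovery time depends on $|s^*-\bar s_x|$ and is not obviously smaller than $\tau$. The paper avoids this entirely by targeting the boundary set $B(L,[\tilde s_0,\tilde s_1])$ and then staying in the rectangle $\llbracket 1,L\rrbracket\times[\tilde s_0,\tilde s_1]$ until just before $\tau$, so it never needs the flow to come back up through $s^*$ — it only needs the population to return to the value $x$ while the substrate remains inside $[s_0,s_1]$, which is controlled by $|\Psi(\cdot)-s|\leq\varepsilon\max\{D\Sin,k\barmu L\}$ over a window of length $\varepsilon$. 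You should either adopt the interval-targeting strategy or otherwise argue why your point target can be reached in arbitrarily short time.
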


Lemma~\ref{lem:bertrand} is proved in Appendix (see Section~\ref{sec:lem.bertrand}). As Proposition~\ref{prop.partout.a.partout}, its proof relies on a sharp study of the paths. From this, we deduce the next result which is one the cornerstone of the proof of Theorem~\ref{th:main-qsd}.

\begin{lemma}
\label{lem:smallset}
For every $\tau>0$, there exists $\epsilon>0$ and a probability measure $\nu$ on $\mathbb{N^*}\times (0,\bar s_1)$ such that
\begin{align}
\label{eq:smallset}
\forall (y,r) \in K, \quad \mathbb{P}_{(y,r)}\left((X_\tau, S_\tau) \in \cdot \right)\geq \epsilon \nu.
\end{align}
If moreover $K = \llbracket 1, N \rrbracket \times [\delta_1, \delta_2 ]$, for some $N \in \mathbb{N}^*$ and $\delta_2>\delta_1>0$, then we can choose $\nu$ such that $\nu(K)=1$.
\end{lemma}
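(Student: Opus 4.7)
The plan is to use Lemma~\ref{lem:bertrand} to drive the process, starting from anywhere in $K$, into a ``corridor'' of the form $\{x_0\}\times[s_0,s_1]$, and then to smear out the law over the remaining time horizon via an absolutely continuous jump-time scenario. Split $\tau = \tau_1 + \tau_2$ with $\tau_1, \tau_2 > 0$, pick $x_0 \in \NN^*$, $0 < s_0 < s_K$ and $s_1 > s_0$, and apply Lemma~\ref{lem:bertrand} at time $\tau_1$: this gives $\epsilon_0 > 0$ such that
\[
    \PP_{(y,r)}\bigl((X_{\tau_1}, S_{\tau_1}) \in \{x_0\}\times[s_0,s_1]\bigr) \geq \epsilon_0, \qquad \forall\,(y,r) \in K.
\]

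For the smearing step, starting from $(x_0, s)$ with $s \in [s_0, s_1]$, consider the event $E_s$ that exactly one event occurs in $(0, \tau_2]$ and that it is a birth at some time $T \in (0, \tau_2)$. By the usual PDMP computation, the density of $T$ under $\PP_{(x_0,s)}(\,\cdot\, \cap E_s)$ equals
\[
    f(T;s) = \mu(\phi(x_0,s,T))\,x_0\, e^{-\int_0^T (\mu(\phi(x_0,s,u))+D)\,x_0\,du}\, e^{-\int_T^{\tau_2}(\mu(\phi(x_0+1,\phi(x_0,s,T),u-T))+D)\,(x_0+1)\,du},
\]
which is bounded above and below by positive constants uniform in $(T,s)\in[0,\tau_2]\times[s_0,s_1]$. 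On $E_s$ the final state equals $(x_0+1, \Psi(T;s))$ with $\Psi(T;s) := \phi(x_0+1, \phi(x_0,s,T), \tau_2 - T)$. The endpoints $\Psi(0;s) = \phi(x_0+1,s,\tau_2)$ and $\Psi(\tau_2;s) = \phi(x_0,s,\tau_2)$ are distinct for every $s \in [s_0,s_1]$ because the two flows satisfy different ODEs with distinct equilibria $\bar s_{x_0+1} < \bar s_{x_0}$ (Lemma~\ref{lem:defsbar}), so that they cannot meet for positive times. Shrinking $[s_0,s_1]$ and adjusting $\tau_2$ if necessary, uniform continuity of $\Psi$ in $(T,s)$ forces the image $\Psi([0,\tau_2];s)$ to contain a common interval $I$ for every $s \in [s_0,s_1]$. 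Combined with the uniform upper bound on $|\partial_T\Psi|$ supplied by the flow sensitivity estimates of Appendix~\ref{sec.proofs.lemmas}, the pushforward of $f(\,\cdot\,;s)$ by $\Psi(\,\cdot\,;s)$ dominates $c\cdot\mathrm{Leb}|_I$ uniformly in $s \in [s_0,s_1]$, for some $c > 0$.

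Combining both steps via the strong Markov property yields \eqref{eq:smallset} with $\nu$ the normalized Lebesgue measure on $\{x_0+1\}\times I$ and $\epsilon = \epsilon_0\,c\,|I|$. For the special case $K = \llbracket 1, N\rrbracket\times[\delta_1,\delta_2]$, the integer condition $x_0+1 \in \llbracket 1, N\rrbracket$ is arranged by choosing $x_0$ accordingly (switching to a single-death scenario from $x_0=2$ if $N = 1$), while the inclusion $I \subset [\delta_1,\delta_2]$ is ensured by a careful tuning of $x_0$, $\tau_1$, $\tau_2$, $[s_0,s_1]$; if needed, one prepends an additional invocation of Lemma~\ref{lem:bertrand} or of Proposition~\ref{prop.partout.a.partout} to steer the process into a suitable $s$-window before the final smearing step, at the cost of subdividing $\tau$ once more. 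The main obstacle is the smearing step: one must prove a uniform density lower bound for the pushforward, which requires the non-degeneracy of $\Psi(\,\cdot\,;s)$, ultimately due to $\bar s_{x_0} \neq \bar s_{x_0+1}$, together with quantitative uniform control of $\partial_T \Psi$ and of the location of the common image $I$, for which the flow monotonicity and sensitivity results of the appendix are crucial.
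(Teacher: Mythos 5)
Your proposal is correct and follows essentially the same strategy as the paper: steer into a corridor $\{x_0\}\times[s_0,s_1]$ via Lemma~\ref{lem:bertrand}, then produce an absolutely continuous lower bound by randomizing over the time of a single jump during the remaining time horizon, with the density lower bound coming from the strict separation of the flows with $x_0$ versus $x_0\pm 1$ individuals together with a uniform Lipschitz bound on the map from jump-time to terminal substrate. The only cosmetic difference is that you use a birth $x_0\to x_0+1$ as the default smearing event whereas the paper uses the washout $2\to 1$, which streamlines the $\nu(K)=1$ case since the landing level $1$ is automatically in $\llbracket 1,N\rrbracket$ and avoids the extra case distinction at $N=1$ that you have to patch up by switching to a death scenario.
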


\begin{proof} 
Starting from $(y,r) \in K$, the discrete component can reach any point $z$ of $\mathbb{N}^*$ in any time interval with positive probability, so we can easily use any Dirac mass $\delta_z$ (times a constant) as a lower bound for the first marginal of the law of $(X_\tau, S_\tau)$. Let us use $z=1$. For the continuous component, we can use the randomness of the last jump time to prove that its law has a lower bound with Lebesgue density. Consequently to prove \eqref{eq:smallset}, we consider the paths going to $\{2\} \times [s_0,s_1]$ for some $s_1\geq s_0$ well chosen, then being subjected to a washout, and we study the last jump to construct a lower bound with density. 

Let us consider some $\bar s_1> s_1>s_0>0$ such that $s_0<s_K$ and $0<\tau_0<\tau$ which will be fixed at the end of the proof. 
On the one hand, from Lemma~\ref{lem:bertrand}, there exists $\epsilon_0>0$ such that for all $(y,r)\in K$,
\begin{equation}
\label{eq:smallsetpresque0}
	\PP_{(y,r)}\left((X_{\tau-\tau_0},S_{\tau-\tau_0})\in \{2\}\times [s_0,s_1]\right) \geq \epsilon_0.
\end{equation}

On the other hand, let $f$ be any positive function, $s \in  [s_0,s_1]$ and $t>0$. By conditioning on the first jump time and using the Markov property, we have
\begin{align*}
\mathbb{E}_{(2,s)}\left[f(X_t,S_t)\right]
&= e^{-2\,D\, t - 2\,\int_0^t \mu (\phi(2,s,u)) \dif u} \,f\left(2, \phi(2,s,t)\right)\\
&+ \int_0^t  2\,D \,e^{-2\,D\, v - 2\,\int_0^v \mu (\phi(2,s,u)) \dif u} \,\mathbb{E}_{(1,\phi(2,s,v))}\left[f(X_{t-v},S_{t-v})\right] \dif v\\
&+ \int_0^t  2\,\mu (\phi(2,s,v)) \, e^{-2\,D\,v - 2\,\int_0^v \mu (\phi(2,s,u)) \dif u} \,
	\mathbb{E}_{(3,\phi(2,s,v))}\left[f(X_{t-v},S_{t-v})\right] \dif v\\
&\geq \int_0^t  2\,D \, e^{-2\,D \, v - 2\,\int_0^v \mu (\phi(2,s,u)) \dif u} \times e^{-\, (t-v)\,D - \int_0^{t-v} \mu (\phi(1,\phi(2,s,v),u)) \dif u} \\
&\qquad \qquad \times f\left(1, \phi(1,\phi(2,s,v),t-v)\right)  \dif v,
\end{align*}
where the last bound comes from a second use of the Markov property on the second term.  Roughly, we bounded our expectation by considering the event ``the first event is a washout and occurs during the time interval $(0,t)$ and no more jump occurs until $t$".

As $s\mapsto \phi(x,s,u)$ and $x\mapsto \phi(x,s,u)$ are respectively increasing and decreasing (see Lemma~\ref{prop.monotony.flow}) and $\mu$ is increasing, we have for all $s\in [s_0,s_1]$
\[
\mu (\phi(2,s,u))  \leq  \mu (\phi(1,s_1,u)),
\]
hence 
\begin{align*}
\mathbb{E}_{(2,s)}\left[f(X_t,S_t)\right]
&\geq 2\,D\, e^{-2\,D \,t} e^{ - 2\,\int_0^t \mu (\phi(1,s_1,u)) \dif u}\, \int_0^t  f\left(1, \phi(1,\phi(2,s,v),t-v)\right) \dif v.
\end{align*}
By the flow property and Lemma~\ref{prop.monotony.flow}, for $0<\varepsilon < t-v$, we have
\begin{align*}
\phi(1,\phi(2,s,v),t-v) & =
	\phi(1,\phi(1,\phi(2,s,v),\varepsilon),t-(v+\varepsilon))
\\
	&> 
	\phi(1,\phi(2,\phi(2,s,v),\varepsilon),t-(v+\varepsilon))
\\
	&= \phi(1,\phi(2,s,v+\varepsilon),t-(v+\varepsilon))
\end{align*}
and then  $v\mapsto \phi(1,\phi(2,s,v),t-v)$ is strictly decreasing on $[0,t]$. Moreover from \eqref{eq:flow} the derivative of $u\mapsto \phi(1,s,u)$ is bounded from above by $D\,\Sin$.  As $r\mapsto \phi(1,r,t-v)$ is increasing for $r\leq \bar s_1$ from Lemma~\ref{lem:rapprochement}, then from the expression $\phi(1,r,t-v) = r + \int_0^{t-v} (D(\Sin-\phi(1,r,u))-k\,\mu(\phi(1,r,u)))\,\dif u$, we have $0\leq\frac{\dif}{\dif r}\phi(1,r,t-v)\leq 1$. In addition, either $s\leq \bar s_2$ and $\frac{\dif}{\dif v}\phi(2,s,v)\geq 0$ or $s< \bar s_2$ and from \eqref{eq:flow} and Lemma~\ref{lem:rapprochement}, $\frac{\dif}{\dif v}\phi(2,s,v)\geq -2\,k\,\mu(\bar s_2)$. So finally, from the chain rule formula 
\begin{align*}
\frac{\dif}{\dif v} \phi(1,\phi(2,s,v),t-v)
	&= \frac{\dif}{\dif v}\phi(2,s,v)\,\frac{\dif}{\dif r} \phi(1,r,t-v)_{\vert_{r=\phi(2,s,v)}} - \frac{\dif}{\dif u} \phi(1,\phi(2,s,v),u)_{\vert_{u=t-v}},
\end{align*}
the derivative of $v\mapsto \phi(1,\phi(2,s,v),t-v)$ is then bounded from below by $-D\,\Sin-2\,k\,\mu(\bar s_2)$.  By a change of variable, for every $s_0<s_1$, we have for $c_0=\left[D\,\Sin+2\,k\,\mu(\bar s_2)\right]^{-1}$ and for $s\in (s_0,s_1)$
\begin{align*}
\mathbb{E}_{(2,s)}\left[f(X_t,S_t)\right]
&\geq c_{0} \,2\,D \, e^{-2\, D\, t} e^{ - 2\,\int_0^t \mu (\phi(1,s_1,u)) \dif u}\, \int_{\phi(2,s,t)}^{\phi(1,s,t)}  f(1,w)  \dif w \\
&\geq c_{0} \,2\, D e^{-2\, D\, t}\, e^{ - 2\,\int_0^t \mu (\phi(1,s_1,u)) \dif u} \,\int_{\phi(2,s_1,t)}^{\phi(1,s_0,t)}  f(1,w)  \dif w,
\end{align*}
where the last term is non negative as soon as $\phi(2,s_1,t)<\phi(1,s_0,t)$.
First,  we fix any $s_0<s_K$. 
As  $\phi(2,s_0,t)<\phi(1,s_0,t)$, by continuity, we can find $ s_1 > s_0$ satisfying $\phi(2,s_1,t)<\phi(1,s_0,t)$. 
Fixing such two points $s_0$ and $s_1$ for $t=\tau_0$ with $0<\tau_0<\tau$, then leads to
\begin{equation}
\label{eq:smallsetpresque}
\forall s\in [s_0,s_1], \quad \mathbb{P}_{(2,s)}\left((X_{\tau_0},S_{\tau_0})\in \cdot\right) \geq \epsilon_1 \nu,
\end{equation}
with
\[
 \nu(dy,ds) =\delta_{1}(dy) \frac{\1_{[\phi(2,s_1,\tau_0),\phi(1,s_0,\tau_0)]} (s)}{\phi(1,s_0,\tau_0) -\phi(2,s_1,\tau_0)} ds,
 \]
and
\[
\epsilon_1= c_{0} \, 2\,D \, e^{-2\,D\,\tau_0}\, e^{ - 2\,\int_0^{\tau_0}\mu (\phi(1,s_1,u)) \dif u}  (\phi(1,s_0,\tau_0) -\phi(2,s_1,\tau_0)).
\]
As a consequence,  from \eqref{eq:smallsetpresque0} and \eqref{eq:smallsetpresque}, Equation~\eqref{eq:smallset} holds with $\epsilon=\epsilon_0 \epsilon_1$, by Markov property.

\medskip
If $K = \llbracket 1, N \rrbracket \times [\delta_1, \delta_2 ]$, even if it means choosing $\tau_0$ small enough, $s_0$ and $s_1$ can be chosen such that they furthermore satisfy $\phi(1,s_0,\tau_0)> \delta_1$ and $\phi(2,s_1,\tau_0)< \delta_2$. Then $\nu(K)>0$ and \eqref{eq:smallsetpresque} holds with $\tilde \epsilon_1$ and the probability measure $\tilde \nu$, satisfying $\tilde \nu(K)=1$, defined by
\[
\tilde \nu = \frac{\nu\left(\1_K\, \cdot \right)}{\nu(K)}, \qquad \tilde \epsilon_1= \epsilon_1\, \nu(K)\,.
\]
Note that if $\tau_0<\phit(2,0,\delta_2)$ (i.e. $\phis(2,\delta_2,\tau_0)>0$) then such $s_0$ and $s_1$ exists.  In fact, we can choose 
$s_0$ and $s_1$ such that $s_0\in (\phis(1,\delta_1,\tau_0), \delta_1\wedge \phis(2,\delta_2,\tau_0))$ and $s_1\in (s_0, \phis(2,\delta_2,\tau_0)\wedge \phis(2,\phi(1,s_0,\tau_0),\tau_0))$. As $\delta_1<\bar s_1$ and as $\ell\mapsto \phis(\ell,s,\tau_0)$ and $s\mapsto \phis(\ell,s,\tau_0)$ are both increasing (by definition of $\phis$ and by Lemma~\ref{prop.monotony.flow}), we can check that $\delta_1$ and $\delta_2$ are well defined.
Moreover $s0$ and $s_1$ are such that $s_0<\delta_1=s_K $ and $s_0<s_1$, and by Lemma~\ref{prop.monotony.flow}, we have 
\[\phi(1,s_0,\tau_0)> \phi(1,\phis(1,\delta_1,\tau_0),\tau_0)\geq \delta_1;\]
\[\phi(2,s_1,\tau_0)<\phi(2, \phis(2,\delta_2,\tau_0), \tau_0)=\delta_2;\]
\[\phi(2,s_1,t)<\phi(2,\phis(2,\phi(1,s_0,\tau_0),\tau_0),\tau_0)=\phi(1,s_0,\tau_0).\qedhere\]
\end{proof}

\subsubsection{Mass ratio inequality}
\label{sec:mass.ratio}

Our aim in this subsection is to prove the mass ratio inequality \eqref{tag:MRI} given on page~\pageref{tag:MRI} by using our bounds on the hitting time given in Proposition~\ref{prop.partout.a.partout}. 

\begin{lemma}
\label{lem:ratio}
Let $K$ be a compact set of $\NN^*\times (0,\bar s_1)$, then
\[
\sup_{(x,s),(y,r)\in K} \sup_{t\geq 0} \frac{\mathbb{E}_{(y,r)} \left[ \psi (X_{t}, S_{t})\right]}{\mathbb{E}_{(x,s)} \left[ \psi (X_{t}, S_{t})\right]} <+ \infty.
\]
\end{lemma}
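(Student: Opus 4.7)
The plan is to fix a reference point $(x^*, s^*)$ with $|s^* - \bar s_{x^*}|$ bounded away from zero, enlarge $K$ to a compact $\tilde K \subset \mathbb{N}^* \times (0, \bar s_1)$ containing $(x^*, s^*)$, and chain two hitting-time comparisons: one from every $(x,s) \in K$ to $(x^*, s^*)$, and a second from $(x^*, s^*)$ back to every $(y, r) \in K$. Each comparison cancels a single time shift of length $\tau$ by matching the Lyapunov upper bound $M_{T+\tau}\psi \leq e^{(\mu(\bar s_1)-D)\tau}\, M_T\psi$ against a strong-Markov lower bound of the form $M_{T+\tau}\psi(x,s) \geq C\,e^{-D\varepsilon}\,M_T\psi(x^*, s^*)$, which together produce a ratio bound uniform in $T$.

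For the first direction, Proposition~\ref{prop.partout.a.partout} applied on $\tilde K$ with target $(x^*, s^*)$ furnishes $\tau > \varepsilon > 0$ and $C_1 > 0$ such that $\mathbb{P}_{(x,s)}(\sigma \in [\tau - \varepsilon, \tau]) \geq C_1$ for all $(x,s) \in K$, where $\sigma := \tilde T_{(x^*, s^*)}$. The strong Markov property at $\sigma$, combined with the semigroup identity $M_{T+\tau-\sigma}\psi = M_T M_{\tau-\sigma}\psi$ and the pointwise Lyapunov lower bound $M_u\psi \geq e^{-Du}\psi$ from Lemma~\ref{lem:lyapunov}, yields
\[
M_{T+\tau}\psi(x,s) \;\geq\; \mathbb{E}_{(x,s)}\bigl[\mathbf{1}_{\sigma \in [\tau-\varepsilon,\tau]}\, M_{T+\tau-\sigma}\psi(x^*, s^*)\bigr] \;\geq\; C_1 e^{-D\varepsilon}\, M_T\psi(x^*, s^*).
\]
Dividing by the Lyapunov upper bound $M_{T+\tau}\psi(x,s) = M_T(M_\tau\psi)(x,s) \leq e^{(\mu(\bar s_1)-D)\tau}\, M_T\psi(x,s)$ (which uses positivity of $M_T$ together with $M_\tau\psi \leq e^{(\mu(\bar s_1)-D)\tau}\psi$ pointwise) gives $M_T\psi(x^*, s^*) \leq A\, M_T\psi(x,s)$ uniformly in $T \geq 0$ and $(x,s) \in K$. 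The symmetric argument starting at $(x^*, s^*)$ and hitting $(y,r) \in K$ with $|r - \bar s_y| > \delta$ yields $M_T\psi(y,r) \leq B\, M_T\psi(x^*, s^*)$, and chaining produces $M_T\psi(y,r) \leq AB\, M_T\psi(x,s)$.

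The main obstacle is the hypothesis $|r - \bar s_y| > \delta$ on the target in Proposition~\ref{prop.partout.a.partout}: if $K$ contains an equilibrium $(\ell, \bar s_\ell)$, which is allowed for $\ell \geq 2$ since $\bar s_\ell \in (0, \bar s_1)$, then the $\ell$-flow only approaches $\bar s_\ell$ asymptotically and this point cannot be hit from any other initial condition, so the reverse comparison fails there. I would handle this by a first-jump decomposition from $(\ell, \bar s_\ell)$: after the first jump (which occurs at the positive rate $\ell(\mu(\bar s_\ell)+D)$), the state is $(\ell \pm 1, \bar s_\ell)$, which lies at fixed positive distance $|\bar s_\ell - \bar s_{\ell\pm 1}|$ from the equilibrium of its own dynamics, so the reverse comparison applies to this post-jump state; the exponentially small no-jump contribution $\ell\, e^{-\ell(\mu(\bar s_\ell)+D)T}$ is absorbed into an overall constant using the Lyapunov lower bound on $M_T\psi(x,s)$, extending the inequality to all of $K$.
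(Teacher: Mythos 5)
Your proof is correct and follows essentially the same route as the paper: the hitting-time lower bound of Proposition~\ref{prop.partout.a.partout}, the Lyapunov sandwich \eqref{eq:encadrepsi} to transfer the ratio across a time shift of length $\tau$, and a first-jump decomposition for the targets that the hitting-time estimate cannot reach. Two remarks. The intermediate reference $(x^*,s^*)$ is unnecessary, since Proposition~\ref{prop.partout.a.partout} places no constraint on the source: one can hit $(y,r)$ directly from $(x,s)$, as the paper does in \eqref{eq:presqueA4}; your $T\mapsto T+\tau$ bookkeeping does, however, treat all $T\geq 0$ in one stroke and avoids the paper's separate small-$t$ case. More substantively, for a fixed $\delta>0$ the problematic targets form the whole strip $\{(y,r)\in K : |r-\bar s_y|\leq \delta\}$, not merely the isolated equilibria $(\ell,\bar s_\ell)$: the constant of Proposition~\ref{prop.partout.a.partout} degrades as $\delta\to 0$, so $\delta$ must be fixed, and any $(y,r)$ in that strip then escapes the direct comparison. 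Your first-jump argument does extend to these points, but one must check that the post-jump substrate $\phi(\ell,r,v)$ --- which depends on the random jump time $v$ --- remains $\delta$-separated from $\bar s_{\ell\pm 1}$; this uses attractivity of $\bar s_\ell$ (Lemma~\ref{lem:rapprochement}), which keeps $\phi(\ell,r,v)$ inside $\mathcal{B}(\bar s_\ell,\delta)$, together with the choice $\delta< \tfrac12 \min_{i\neq j}|\bar s_i-\bar s_j|$. This is exactly the step the paper carries out.
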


\begin{proof}

We set $L=\max_{(x,s)\in K}x$, $\SminK=\min_{(\ell, s)\in K} s $ and $\SmaxK=\max_{(\ell, s)\in K} s $. Let
\begin{equation}
\label{eq:deltadef}
0<\delta <  \min\left\{\frac{1}{2} \min_{y,z \in K, \, y\neq z } |\bar s_y - \bar s_z|\, ; \,  \bar s_1-\SmaxK\right\}.
\end{equation}
Note that, from Lemma~\ref{lem:defsbar}, elements of $(\bar s_\ell)_{\ell \in K}$ are all distinct and then the right member of \eqref{eq:deltadef} is then strictly positive.
Let  also $\tilde K :=   \llbracket 1, L+1 \rrbracket  \times [\min\{s_K, \bar s_L\}, \max\{S_K, \bar s_2\}]$ be a compact set of $\NN^*\times (0,\bar s_1)$ such that $K\subset \tilde K$ and let $t_{\min}$ defined by \eqref{def:tmin} for the compact set $\tilde K$. Let $\tau>t_{\min}$, from \eqref{eq:encadrepsi}, 
\[
\sup_{(x,s),(y,r)\in K} \sup_{t\leq \tau} \frac{\mathbb{E}_{(y,r)} \left[ \psi (X_{t}, S_{t})\right]}{\mathbb{E}_{(x,s)} \left[ \psi (X_{t}, S_{t})\right]} 
\leq e^{\mu(\bar s_1)\,\tau}\,L<+ \infty,
\]
then it remains to prove that there exist $C>0$, such that for all $(x,s),(y,r) \in K$ and $t\geq \tau$, we have
\begin{align}
\label{eq:ratio.t.geq.tau}
\mathbb{E}_{(y,r)} \left[ \psi (X_{t}, S_{t})\right]\leq C\,\mathbb{E}_{(x,s)} \left[ \psi (X_{t}, S_{t})\right].
\end{align}

We first show by Proposition~\ref{prop.partout.a.partout} that \eqref{eq:ratio.t.geq.tau} holds for $(y,r)\in \tilde K \setminus \left(\bigcup_{\ell= 1}^{L+1} \{\ell\} \times   \mathcal{B}(\bar s_\ell, \delta)\right)$ with $\mathcal{B}(\bar s_\ell, \delta):=\{r,\, |r-\bar s_\ell|\leq\delta\}$. Then for $(y,r) \in K \cap \bigcup_{\ell= 1}^L \{\ell\} \times   \mathcal{B}(\bar s_\ell, \delta)$,  conditioning on the first event, either no jump occurs and we make use of \eqref{eq:encadrepsi}, or a jump occurs and the process after the jump belongs to $\tilde K \setminus \left(\bigcup_{\ell= 1}^{L+1} \{\ell\} \times   \mathcal{B}(\bar s_\ell, \delta)\right)$ which then allows to use \eqref{eq:ratio.t.geq.tau}.

\medskip

By the Markov property, for all $0\leq u\leq t$, for all $(y,r)\in \tilde K$, $\mathbb{E}_{(y,r)}\left[\psi(X_t,S_t)\right] = \mathbb{E}_{(y,r)}\left[\mathbb{E}_{(X_{t-u},S_{t-u})}\left[\psi(X_u,S_u)\right]\right] $. Applying \eqref{eq:encadrepsi} to $\mathbb{E}_{(X_{t-u},S_{t-u})}\left[\psi(X_u,S_u)\right]$, we then obtain
\begin{align}
\label{eq:esppsi}
e^{-D\,u}\,\mathbb{E}_{(y,r)}\left[\psi(X_{t-u},S_{t-u})\right]
\leq \mathbb{E}_{(y,r)}\left[\psi(X_t,S_t)\right] \leq e^{(\barmu-D)u}\,\mathbb{E}_{(y,r)}\left[\psi(X_{t-u},S_{t-u})\right].
\end{align} 
 
Mimicking the arguments of \cite[Theorem 1.1]{CG20}, we then deduce that for every $(x,s) \in K \subset \tilde K$ and $(y,r)\in \tilde K \setminus \left(\bigcup_{\ell= 1}^{L+1} \{\ell\} \times   \mathcal{B}(\bar s_\ell, \delta)\right)$, for all $t\geq \tau$, 
\begin{align}
 \mathbb{E}_{(x,s)} \left[ \psi (X_t, S_t)\right]
 &\geq  \mathbb{E}_{(x,s)} \left[\1_{ T_{y,r}  \leq \tau} \times \mathbb{E}_{(y,r)}\left[ \psi\left(X_{t- u}, S_{t- u}\right)\right]_{\vert_{u=T_{y,r}}}\right]\nonumber\\
 &\geq \PP_{(x,s)}\left( T_{y,r}  \leq \tau \right) \int_0^\tau  \mathbb{E}_{(y,r)}\left[ \psi\left(X_{t- u}, S_{t-u}\right)\right] \sigma^{x,s}_{y,r}(\dif u)\nonumber\\
 &\geq \tilde C\int_0^\tau e^{-(\barmu-D) u} \sigma^{x,s}_{y,r}(\dif u)  \mathbb{E}_{(y,r)} \left[ \psi (X_t, S_t)\right]\nonumber\\
 &\geq \tilde C e^{-(\barmu-D) \tau}   \mathbb{E}_{(y,r)} \left[ \psi (X_t, S_t)\right], \label{eq:presqueA4}
\end{align}
with $T_{y,r}:=\inf\{t\geq 0, \, (X_t,S_t)=(y,r)\}$ the first hitting time of $(y,r)$ and $\tilde C>0$. In the first line, we used the strong Markov property, in the second line $\sigma^{x,s}_{y,r}$ represents the law of $T_{y,r}$ conditionally to $\{((X_0,S_0)=(x,s)) \cap (T_{y,r}\leq \tau)\}$ and the third line comes from \eqref{eq:esppsi} and Proposition~\ref{prop.partout.a.partout}.

\medskip

It remains to extend the previous inequality to $(y,r) \in K \cap \bigcup_{\ell= 1}^L \{\ell\} \times   \mathcal{B}(\bar s_\ell, \delta)$. 
By conditioning on the first jump and using Markov property, we have
\begin{align}
 \mathbb{E}_{(y,r)} \left[ \psi (X_t, S_t)\right]
&= e^{-D\,y \,t - y\,\int_0^t \mu (\phi(y,r,u)) \dif u} \,\psi\left(y, \phi(y,r,t)\right) \nonumber\\
&+ \int_0^t  y\,D \,e^{-D \,y \,v - y\,\int_0^v \mu (\phi(y,r,u)) \dif u}\, \mathbb{E}_{(y-1, \phi(y,r,v))} \left[ \psi (X_{t-v}, S_{t-v})\right] \dif v\nonumber\\
&+ \int_0^t  y\,\mu (\phi(y,r,v))  e^{-D\, y\, v - y\, \int_0^v \mu (\phi(y,r,u)) \dif u} \, \mathbb{E}_{(y+1, \phi(y,r,v))} \left[ \psi (X_{t-v}, S_{t-v})\right] \dif v .\label{eq:duhamel}
\end{align}
First notice that, as $\delta<\bar s_1- \SmaxK$,  then ($y,r)$ necessarily satisfies $y\geq 2$. Therefore $(y-1, \phi(y,r,v))\in \tilde K$ and $(y+1, \phi(y,r,v))\in \tilde K$ (see Lemma~\ref{lem:rapprochement}).

From the definition of $\psi$ and \eqref{eq:encadrepsi},  we have, for any $(x,s)\in K$
\begin{align}
\label{eq:duhamel_term1}
\psi\left(y, \phi(y,r,t)\right) 
&= \frac{y}{x} \psi\left(x, s\right)
\leq \frac{y}{x}\,e^{D \,t} \, \mathbb{E}_{(x,s)} \left[ \psi (X_t, S_t)\right].
\end{align}
From \eqref{eq:esppsi}, 
\begin{align*}
\mathbb{E}_{(y-1, \phi(y,r,v))} \left[ \psi (X_{t-v}, S_{t-v})\right]
\leq & e^{D\,v}\,\mathbb{E}_{(y-1, \phi(y,r,v))} \left[ \psi (X_{t}, S_{t})\right] .
\end{align*}
Now as  $(y, r) \in \bigcup_{\ell=1}^L \{\ell\} \times \mathcal{B}(\bar s_\ell, \delta)$, then $r \in \mathcal{B}(\bar s_y, \delta),$ and $\phi(y,r,v) \in \mathcal{B}(\bar s_y, \delta)$ for all $v\geq  0$ because of Lemma~\ref{lem:rapprochement} (\textit{i.e.} equilibrium points are attractive). Thus, by definition of $\delta$, we have
\begin{align*}
\left|\phi(y,r,v)- \bar s_{y-1} \right|
\geq  \left|\bar s_{y-1}- \bar s_y \right| - \left|\phi(y,r,v) - \bar s_{y} \right|
>  2\delta - \delta=\delta,
\end{align*}
then
\[
(y-1, \phi(y,r,v)) \notin \bigcup_{\ell=1}^L \{\ell\} \times\mathcal{B}(\bar s_\ell, \delta).
\]
We can then apply \eqref{eq:esppsi} and \eqref{eq:presqueA4} to obtain
\begin{align}
\nonumber
\mathbb{E}_{(y-1, \phi(y,r,v))} \left[ \psi (X_{t-v}, S_{t-v})\right] 
& \leq e^{D\,v}\,\mathbb{E}_{(y-1, \phi(y,r,v))} \left[ \psi (X_t, S_t)\right] \\
\label{eq:duhamel_term2}
& \leq  e^{D\,v}\,\tilde C^{-1} e^{(\barmu-D) \tau}  \mathbb{E}_{(x,s)} \left[ \psi (X_{t}, S_{t})\right] \,.
\end{align} 
Similarly, we have
\begin{align}
\label{eq:duhamel_term3}
\mathbb{E}_{(y+1, \phi(y,r,v))} \left[ \psi (X_{t-v}, S_{t-v})\right]
\leq & e^{D\,v}\, \tilde C^{-1} e^{(\barmu-D) \tau}   \mathbb{E}_{(x,s)} \left[ \psi (X_{t}, S_{t})\right] \,.
\end{align}
 From \eqref{eq:duhamel}-\eqref{eq:duhamel_term1}-\eqref{eq:duhamel_term2} and \eqref{eq:duhamel_term3}, we then obtain
\begin{align*}
 \mathbb{E}_{(y,r)} \left[ \psi (X_t, S_t)\right]
&\leq \mathbb{E}_{(x,s)} \left[ \psi (X_{t}, S_{t})\right]\,\Big(e^{-D\,(y-1)\, t - y\,\int_0^t \mu (\phi(y,r,u)) \dif u} \frac{y}{x}\\
&\quad + \tilde C^{-1}\,e^{(\barmu-D) \tau} \,\int_0^t  y\,(D+\mu (\phi(y,r,v)))\, e^{-D\,(y-1)\, v -y \,\int_0^v  \mu (\phi(y,r,u)) \dif u}\, \dif v\Big)\,.
\\
&\leq (L\wedge \tilde C^{-1}e^{(\barmu-D) \tau})\,\mathbb{E}_{(x,s)} \left[ \psi (X_{t}, S_{t})\right]
\\ & \qquad \times \Big(1+  \,\int_0^t D\, e^{-D\,(y-1)\, v -y \,\int_0^v  \mu (\phi(y,r,u)) \dif u}\, \dif v\Big)\,.
\end{align*}
As $y\geq 2$, then
\begin{align*}
 \int_0^t  D\, e^{-D\,(y-1)\, v - \int_0^v y \mu (\phi(y,r,u)) \dif u}\, \dif v
 	& \leq \int_0^t  D\, e^{-D\, v} \dif v \leq 1
\end{align*}
and \eqref{eq:ratio.t.geq.tau} holds with $C:=2\,(L\wedge \tilde C^{-1}e^{(\barmu-D) \tau})$, which achieves the proof.
\end{proof}

\section{Proof of Corollary~\ref{co:yaglom}}
\label{sec:proof.corollary}

Let us now show that the convergence towards the quasi-stationary distribution $\pi$, established in Theorem~\ref{th:main-qsd}, extends for initial measures with support larger than $\X$. The proof is in two parts, first we extend the convergence to initial conditions in $\mathbb{N}^*\times (0,+\infty)$ and then for $S_0=0$.
 
For the first part of the proof, it is sufficient to show that $h$ can be extended for all $(x,s) \in \mathbb{N}^* \times [\bar s_1,+\infty)$ such that $h(x,s)\in (0,\infty)$ and
\begin{align}
\label{eq:cv.exp.lamt.exp}
\lim_{t\to \infty} e^{\lambda t} \mathbb{E}_{(x,s)} [f(X_t,S_t)] = \pi(f) \times h(x,s),
\end{align}
for any bounded function on $\mathbb{N} \times \mathbb{R}_+$ such that $f(0,\cdot) =0$. In fact, if such function $h$ exists, then choosing $f(x,s)=\1_{x\neq 0}$ leads to $h(x,s)= \lim_{t\to \infty} e^{\lambda t}\mathbb{P}_{(x,s)} (\Text>t)$  (extending the definition of $h$ given by \eqref{def:h} on $\mathbb{N^*} \times \mathbb{R}_+$) and the result holds.

Let $\epsilon>0$ and set $T_\epsilon=T_{ \mathbb{N}^*\times (0,\bar s_1-\epsilon]}$ being the hitting time of $\mathbb{N}^*\times (0,\bar s_1-\epsilon]$. We have
\begin{align*}
\mathbb{E}_{(x,s)} [f(X_t,S_t)]
&= \mathbb{E}_{(x,s)} [f(X_t,S_t)\1_{(T_\epsilon\wedge \Text)\leq t}] + \mathbb{E}_{(x,s)} [f(X_t,S_t)\1_{(T_\epsilon\wedge \Text)>t} ]
\end{align*}

On the one hand, from Lemma~\ref{lem:lyap2} we can choose $\epsilon$ sufficiently small such that, from Markov inequality, 
\begin{align}
\label{eq:proba.T_epsilon}
\mathbb{P}_{(x,s)}(T_\epsilon\wedge \Text >t)\leq \mathbb{E}_{(x,s)}\left[e^{(D+C)(T_\epsilon\wedge \Text)} \right] e^{-(D+C)t} \leq A e^{\beta s} e^{-(D+C)t},
\end{align}
where $A$, $\beta$ and $C$ are positive constants (which depend on $\epsilon$) given by Lemma~\ref{lem:lyap2}.
As $\lambda \leq D$ by \eqref{eq:eigenvalue.bounds}, we then have 
\begin{align*}
e^{\lambda t}\mathbb{E}_{(x,s)} [f(X_t,S_t)\1_{T_\epsilon\wedge \Text>t} ] 
 &\leq \|f \|_\infty e^{\lambda t} \mathbb{P}_{(x,s)}(T_\epsilon\wedge \Text>t) \\
 &\leq \|f \|_\infty A e^{\beta s} e^{-Ct} \longrightarrow_{t\to \infty} 0\,.
\end{align*}
On the second hand, noting that $f(X_t,S_t)\1_{\Text\leq t}=0$, 
from the strong Markov Property 
\begin{align}
\nonumber
e^{\lambda t}\mathbb{E}_{(x,s)} [f(X_t,S_t)\1_{(T_\epsilon\wedge \Text)\leq t}] 
&=e^{\lambda t}\mathbb{E}_{(x,s)} [f(X_t,S_t)\1_{T_\epsilon\leq t}] \\
\label{eq:T_eps_leq_t}
&= e^{\lambda t} \mathbb{E}_{(x,s)} \left[
\mathbb{E}_{(X_{T_\epsilon},S_{T_\epsilon})}\left[f(X_{u},S_{u}) \right]_{\vert_{u=t-T_\epsilon}} \1_{T_\epsilon\leq t} \right].
\end{align}
Moreover, fixing $\rho>1$ and $p\in\left(0,\frac{\mu(\bar s_1)-D}{D+k\,\mu'(\bar s_1)}\right)$, for all $0<\tilde \omega\leq \omega$ (with $\omega$ depending on $\rho$ and $p$), as \eqref{eq:cv_qsd_X_eq_h} holds replacing  $\omega$ by $\tilde \omega$ and as, by continuity of the process $(S_t)_t$, $(X_{T_\epsilon},S_{T_\epsilon})\in\mathbb{N}^*\times \{\bar s_1-\epsilon\}\subset \mathbb{N}^*\times (0,\bar s_1)$ on the event $\{T_\epsilon\leq t\}$, we obtain
\begin{align}
\nonumber
& \left|e^{\lambda t}\,\mathbb{E}_{(x,s)} \left[
 \mathbb{E}_{(X_{T_\epsilon},S_{T_\epsilon})}\left[f(X_{u},S_{u}) \right]_{\vert_{u=t-T_\epsilon}} \1_{T_\epsilon\leq t} \right]
- \mathbb{E}_{(x,s)} \left[e^{\lambda T_\epsilon} \, h(X_{T_\epsilon},S_{T_\epsilon})\pi(f) \1_{T_\epsilon\leq t} \right] \right|\\
\nonumber 
& \qquad \leq
\mathbb{E}_{(x,s)} \left[e^{\lambda T_\epsilon} \left| e^{\lambda( t-T_\epsilon)}
 \mathbb{E}_{(X_{T_\epsilon},S_{T_\epsilon})}\left[f(X_{u},S_{u}) \right]_{\vert_{u=t-T_\epsilon}}-h(X_{T_\epsilon},S_{T_\epsilon})\pi(f)\right| \1_{T_\epsilon\leq t} \right]\\
\label{eq:cv.T_eps_leq_t}
&\qquad \leq
||f||_\infty\,C\,e^{-\tilde\omega\,t}
 \mathbb{E}_{(x,s)} \left[e^{(\lambda+\tilde\omega)\,T_\epsilon}W_{\rho,p}(X_{T_\epsilon},S_{T_\epsilon}) \1_{T_\epsilon\leq t} \right].
\end{align}
In addition,
\begin{align*}
 \mathbb{E}_{(x,s)} \left[e^{(\lambda+\tilde \omega)\, T_\epsilon}W_{\rho,p}(X_{T_\epsilon},S_{T_\epsilon}) \1_{T_\epsilon\leq t} \right]
 	&\leq	
 		\tilde C\, \mathbb{E}_{(x,s)} \left[e^{(\lambda+\tilde\omega)\, T_\epsilon} \,V_0(X_{T_\epsilon},S_{T_\epsilon}) \, \1_{T_\epsilon\leq t} \right]
\end{align*}
with $\tilde C=\log(\rho)\,e^{-\alpha(\bar s_1-\varepsilon)}+(\bar s_1-\epsilon)^{-1}+\epsilon^{-p}$ and $V_0$ defined by $V_0(x,s)=\rho^{x}\,e^{\alpha\,s}/\log(\rho)$ for all $(x,s)\in \mathbb{N}\times \mathbb{R}_+$. In the same way as in Section~\ref{sec:proof.lem.LV}, for all $\eta>0$, there exists $C_\eta>0$ such that $\mathcal{L}(V_0(x,s)-C_\eta)\leq -\eta (V_0(x,s) - C_\eta)$ for all $(x,s)\in \mathbb{N^*}\times \mathbb{R}_+$. And by the same arguments used in the proof of Lemma~\ref{lem:lyap2}, $\left(\left(V_0(X_t,S_t)-C_\eta\right)\,e^{\eta\,t}\right)_t$ is a submartingale. Then by the stopping time theorem and remarking that $\{T_\epsilon\leq t\} \subset \{T_\epsilon\leq \Text\}$, we obtain for $\eta=\lambda+\tilde \omega$
\begin{align}
\nonumber
& \mathbb{E}_{(x,s)} \left[e^{(\lambda+\tilde\omega)\, T_\epsilon}W_{\rho,p}(X_{T_\epsilon},S_{T_\epsilon}) \1_{T_\epsilon\leq t} \right]
\\ \nonumber	&\qquad\leq	
 		\tilde C\, \left|\mathbb{E}_{(x,s)} \left[e^{(\lambda+\tilde\omega)\, T_\epsilon\wedge t} \,(V_0(X_{T_\epsilon\wedge t},S_{T_\epsilon\wedge t})-C_\eta)\right]\right|
 		+\tilde C\,C_\eta\, \mathbb{E}_{(x,s)} \left[e^{(\lambda+\tilde\omega)\, T_\epsilon}\1_{T_\epsilon\leq t} \right]
\\ \label{eq:dom.exp.T_eps.V_0}
 	&\qquad \leq \tilde C\,\left|V_0(x,s)-C_\eta\right| +\tilde C\,C_\eta\, \mathbb{E}_{(x,s)} \left[e^{(\lambda+\tilde\omega)\, (T_\epsilon\wedge \Text)} \right]\,.
\end{align}
By \eqref{eq:eigenvalue.bounds}, $\lambda \leq D$. Then, for $0<\tilde \omega\leq \omega$ sufficiently small (smaller than the constant $C$ of Lemma~\ref{lem:lyap2}), Lemma~\ref{lem:lyap2} and \eqref{eq:dom.exp.T_eps.V_0} lead to
\begin{align}
\label{eq:esp.W.bounded}
\mathbb{E}_{(x,s)} \left[e^{(\lambda+\tilde\omega)\, T_\epsilon}W_{\rho,p}(X_{T_\epsilon},S_{T_\epsilon}) \1_{T_\epsilon\leq t} \right]
 \leq \tilde C\,\left|V_0(x,s)-C_\eta\right| +\tilde C\,C_\eta\, A\,e^{\beta\,s}\,.
\end{align}
Hence, \eqref{eq:T_eps_leq_t}, \eqref{eq:cv.T_eps_leq_t} and \eqref{eq:esp.W.bounded} gives
\begin{align*}
e^{\lambda t}\mathbb{E}_{(x,s)} [f(X_t,S_t)\1_{(T_\epsilon\wedge \Text)\leq t}] 
\longrightarrow_{t\to \infty}  \pi(f) \,\mathbb{E}_{(x,s)} \left[e^{\lambda T_\epsilon}  h(X_{T_\epsilon},S_{T_\epsilon}) \1_{T_\epsilon < \infty}\right],
\end{align*}
where we used that $h\leq C_h\, W_{\rho,p}$ on $\mathbb{N}^*\times (0,\bar s_1)$, \eqref{eq:esp.W.bounded} and the dominated convergence theorem. Then,  \eqref{eq:cv.exp.lamt.exp} holds with $h(x,s)=\mathbb{E}_{(x,s)} \left[e^{\lambda T_\epsilon} h(X_{T_\epsilon},S_{T_\epsilon}) \1_{T_\epsilon < \infty} \right]$ for all $(x,s) \in \mathbb{N}^* \times [\bar s_1,+\infty)$, which is finite by the previous arguments. Moreover Lemma~\ref{lem:lyap2} ensures that $h(x,s)>0$.

\bigskip 
 
It remains to show the result for $s=0$. Let $x\in \mathbb{N}^*$, the Markov property gives for $t'>t>0$,
\begin{align*}
\mathbb{E}_{(x,0)} \left[ f(X_{t'},S_{t'})  \ | \ \Text>t'\right] 
&=\frac{\mathbb{E}_{(x,0)} \left[ f(X_{t'},S_{t'})  \1_{\Text>t'} \right]}{\mathbb{E}_{(x,0)} \left[ \1_{\Text>t'}\right]}\\
& =\frac{\mathbb{E}_{(x,0)} \left[ f(X_{t'},S_{t'})  \1_{\Text>t'} \ | \ \Text> t \right]}{\mathbb{E}_{(x,0)} \left[ \1_{\Text>t'} \ | \ \Text> t\right]}\\ 
&=\frac{\mathbb{E}_{(x,0)} \left[ \mathbb{E}_{(X_{t},S_{t})}\left[ f(X_{t'-t},S_{t'-t})  \1_{\Text>(t'-t)} \right] \ | \ \Text> t \right]}{\mathbb{E}_{(x,0)} \left[ \mathbb{E}_{(X_{t},S_{t})}\left[ \1_{\Text>t'-t}  \right] \ | \ \Text> t \right]}\\ 
&=\frac{\mathbb{E}_{\xi} \left[ f(X_{t'-t},S_{t'-t})  \1_{\Text>(t'-t)} \right]}{\mathbb{E}_{\xi} \left[ \1_{\Text>t'-t}  \right] }\\ 
&=\mathbb{E}_{\xi} \left[ f(X_{t'-t},S_{t'-t}) \ | \ \Text>(t'-t) \right]\\ 
\end{align*}
where $\xi$ is the law of $(X_{t},S_{t})$ conditioned on the event $\{ \Text> t \}\cap\{(X_0,S_0)=(x,0)\}$. 
Assume that $\xi$ is a probability distribution on $\NN^*\times (0,\bar s_1)$, then, from \eqref{eq:cv_qsd},
\begin{align*}
\sup_{\|f \|_\infty \leq 1 } \left|\mathbb{E}_{\xi} \left[ f(X_{t'-t},S_{t'-t})  \ | \ \Text>t'-t\right] - \pi(f) \right| 
&\leq C \min\left(\frac{\xi(W_{\rho,p})}{\xi(h)}, \frac{\xi(W_{\rho,p})}{\xi(\psi)}\right)  e^{-\omega (t'-t)}
\end{align*}
with $\rho>1$ and $p\in\left(0,\frac{\mu(\bar s_1)-D}{D+k\,\mu'(\bar s_1)}\right)$. As $\xi(\psi)\neq 0$ (or $\xi(h)\neq 0$ as from Theorem~\ref{th:main-qsd}, $h(y,r)\in(0,\infty)$ for all $(y,r)\in\NN^*\times (0,\bar s_1)$, then Corollary~\ref{co:yaglom} holds for $s=0$ if in addition $\xi(W_{\rho,p})=\mathbb{E}_{(x,0)}[W_{\rho,p}(X_t,S_t)  \ | \ \Text> t]<+\infty$.  So let us prove that, for $\rho$ sufficiently small, $\xi$ is a probability distribution on $\NN^*\times (0,\bar s_1)$ and that $\xi(W_{\rho,p})<+\infty$, which both consist of proving that 
\[
\mathbb{E}_{(x,0)}\left[\frac{1}{S_t} \ | \ \Text >t\right] <+\infty.
\]
Indeed, note that conditionally on the non-extinction $S_t\leq \phi(1,0,t)< \bar s_1$. Moreover $(X_t)_t$ can be stochastically dominated by a pure birth process with birth rate $\mu(\bar s_1)$, whose the law at time $t$ is a negative binomial distribution with parameters $x$ and $e^{- \mu(\bar s_1) t}$. Then, for $1<\rho<(1-e^{\mu(\bar s_1)t})^{-1}$, $\mathbb{E}_{(x,0)}\left[\rho^{X_t} \ | \ \Text >t\right]\leq (e^{- \mu(\bar s_1) t}\,\rho /(1- \rho\,(1-e^{- \mu(\bar s_1) t})))^x$.

As the process $(X_t)_t$ dominates a pure death process with death rate (\textit{per capita}) $D$, we have $\mathbb{P}_{(x,0)}(\Text > t) \geq e^{-D x t}$, then it is sufficient to prove that for all (sufficiently small) $t>0$, 
\[
\mathbb{E}_{(x,0)}\left[\frac{1}{S_t} \right] <+\infty.
\]
Instead of using a Lyapunov function, we prove such inequality by coupling method. On $[0,t]$, from \eqref{eq:substrate} and given that $S_0=0$, we have the following upper-bound 
\[
\forall u\in [0,t], \quad  S_u \leq (S_0 + D\Sin t)\wedge \Sin \leq D\Sin t  ,
\]
and also the two following ones
\[
\forall s\in[0,D\Sin t], \quad \mu(s) \leq \bar\mu_t, \ \mu'(s) \leq \bar\mu'_t,
\]
for some constant $\bar\mu_t, \bar\mu'_t>0$. Consequently, we can couple $(X_u)_{u\in [0,t]}$ with a Yule process $(Z_u)_{u\in [0,t]}$ (namely a pure birth process) with jumps rate (\textit{per capita}) $\bar\mu_t$ in such  a way
\[
\forall u\leq t, \quad X_u \leq Z_u.
\]
In particular, $X_u\leq Z_t$. From this bound and the evolution equation of the substrate \eqref{eq:substrate}, we have
\begin{align}
\label{eq:min.S}
\forall u\in [0,t], \quad S_u' \geq D(\Sin -S_u) - k \bar\mu'_t S_u  Z_t,
\end{align}
and then, by a Gronwall type argument,
\[
\forall u\in [0,t], \quad S_u \geq \frac{D \Sin}{D+  k \bar\mu'_t Z_t } (1-e^{-D u-  k \bar\mu'_t Z_t u }) \geq \frac{D \Sin}{D+  k \bar\mu'_tZ_t } (1-e^{-D u}).
\]
Finally using the classical equality for pure birth processes $\mathbb{E}_{(x,0)}[Z_t] = x e^{\bar\mu_t t}$, we obtain 
\[
\mathbb{E}_{(x,0)}\left[\frac{1}{S_t} \right] \leq \frac{D+  k \bar\mu'_t x e^{\bar\mu_t t} }{D \Sin (1-e^{-D t})}  ,
\]
which ends the proof. 

Note that relaxing the assumptions as in Remark~\ref{remark:mu.lipschitz}, even if it means choosing $t$ small enough, $\bar{\mu}'_t$ can be replaced by a local Lipschitz constant in a neighborhood of $0$ in \eqref{eq:min.S}.

\appendix

\section{Classical and simple results on the Crump-Young process}

In the present section, we develop some simple properties on the Crump-Young process, under Assumption~\ref{hyp:mu}.

\subsection{Preliminary results on the flow}
\label{sec.preliminary.flow}

In this subsection, we expose simple results on the flow functions relative to the substrate dynamics with no evolution of the bacteria. We begin by results on the behavior of $\phi$, defined by \eqref{eq:flow}, and then we give bounds on $\phit$  and $\phis$. 

\begin{lemma}
\label{prop.monotony.flow}
The flow satisfies the following properties: for all $s,\, \tilde s  \in \RR_+$, $t> 0$, $\ell, \tilde \ell \in \NN^*$ such that $s< \tilde s$ and $\ell < \tilde \ell$
\begin{enumerate}
\item  $\phi(\ell,s,t)>\phi(\tilde \ell,s,t)$ ;
\item  $\phi(\ell,s,t)< \phi(\ell,\tilde s,t)$.
\end{enumerate}
\end{lemma}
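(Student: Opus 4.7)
The plan is to tackle the two monotonicity statements separately using ODE comparison techniques tailored to the drift $F_\ell(s) = D(\Sin-s) - k\ell\,\mu(s)$.

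For statement (2), it suffices to invoke the standard monotone dependence of flows on initial data. Under Assumption~\ref{hyp:mu}, $\mu\in\mathcal{C}^1(\RR_+)$, so $s\mapsto F_\ell(s)$ is locally Lipschitz and the Cauchy-Lipschitz theorem gives uniqueness (both forward and backward in time). Hence two trajectories of $s'=F_\ell(s)$ starting at distinct values $s<\tilde s$ cannot coincide at any later time, for any crossing would contradict uniqueness. Continuity of $t\mapsto \phi(\ell,s,t)-\phi(\ell,\tilde s,t)$ then propagates the strict ordering $\phi(\ell,s,t)<\phi(\ell,\tilde s,t)$ from $t=0$ to all $t>0$.

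Statement (1) is more subtle since both trajectories share the same initial value $s$. I would fix $\ell<\tilde\ell$, set $u_1(t)=\phi(\ell,s,t)$, $u_2(t)=\phi(\tilde\ell,s,t)$, $w(t)=u_1(t)-u_2(t)$, and subtract the two ODEs. Writing $\mu(u_1)-\mu(u_2)=a(t)\,(u_1-u_2)$ with
\[
a(t):=\begin{cases}\dfrac{\mu(u_1(t))-\mu(u_2(t))}{u_1(t)-u_2(t)}&\text{if }u_1(t)\neq u_2(t),\\ \mu'(u_1(t))&\text{otherwise},\end{cases}
\]
gives $a(t)\geq 0$ (since $\mu$ is increasing), and both $a$ and $\mu\circ u_1$ are bounded on any compact time interval because the flows stay in a compact subset of $\RR_+$. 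Then $w$ solves the linear inhomogeneous ODE
\[
w'(t)=-\bigl[D+k\tilde\ell\,a(t)\bigr]w(t)+k(\tilde\ell-\ell)\,\mu(u_1(t)),\qquad w(0)=0,
\]
whose Duhamel representation yields
\[
w(t)=k(\tilde\ell-\ell)\int_0^t e^{-\int_u^t[D+k\tilde\ell\,a(v)]\,\dif v}\,\mu(u_1(u))\,\dif u.
\]
Thus positivity of $w(t)$ for $t>0$ reduces to positivity of $\mu\circ u_1$ on a subset of $(0,t)$ of positive measure.

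The main obstacle is the boundary case $s=0$, where the forcing term vanishes at $t=0$ because $\mu(0)=0$. Here I would use that $u_1'(0)=D\Sin>0$, so $u_1(u)>0$ for all sufficiently small $u>0$, whence $\mu(u_1(u))>0$ by Assumption~\ref{hyp:mu} and the integrand is strictly positive near the origin. For $s>0$ the same conclusion follows directly from continuity and $\mu(s)>0$. In both cases the Duhamel integral is strictly positive on $(0,t]$, giving $w(t)>0$, i.e.\ $\phi(\ell,s,t)>\phi(\tilde\ell,s,t)$, and concluding (1).
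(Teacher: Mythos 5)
Your proof of part (2) coincides with the paper's: backward-in-time uniqueness from Cauchy--Lipschitz forbids crossings, so the initial ordering $s<\tilde s$ is preserved by continuity for all $t$.

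For part (1) the paper disposes of the claim in one sentence, saying the inequality ``comes from the decreasing property of $\ell\mapsto D(\Sin-s)-k\,\mu(s)\,\ell$'', i.e.\ a bare invocation of the scalar ODE comparison principle. Your Duhamel computation is a genuinely more careful route, and the extra care pays off: it isolates and correctly resolves the degenerate case $s=0$, where the two vector fields $F_\ell$ and $F_{\tilde\ell}$ actually coincide (since $\mu(0)=0$), so the naive strict comparison argument does not apply directly at $t=0^+$. Your observation that $u_1'(0)=D\Sin>0$ pushes $u_1(u)>0$, hence $\mu(u_1(u))>0$, for all small $u>0$ is precisely what guarantees strict positivity of the Duhamel integral on every interval $(0,t]$. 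This subtlety is implicitly needed for the paper's one-liner to be correct but is glossed over there; your version makes the strict inequality transparent for all $t>0$ and all $s\geq0$, including the boundary of the state space. Both arguments are sound; yours trades brevity for a rigor that the boundary case really does require.
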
 
\begin{proof}
The first inequality comes from the decreasing property of  $\ell \mapsto D\,(\Sin-s)-k\,\mu(s)\,\ell$. The second point comes from the Cauchy-Lipschitz (or Picard–Lindelöf) theorem. 
\end{proof}

\begin{lemma}
\label{lem:defsbar}
For every $\ell\in \NN^*$, Equation~\eqref{eq:defsbar}, that is
\[
	D(\Sin-\bar s_\ell)-k\,\mu(\bar s_\ell)\,\ell = 0\,,
\]
admits a unique solution in $(0,\Sin)$. Furthermore the sequence $(\bar s_\ell)_{\ell\geq 1}$ is strictly decreasing and $\lim_{\ell \to \infty} \bar s_\ell = 0$.
\end{lemma}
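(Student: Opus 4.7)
The plan is to study the function $g_\ell:[0,\Sin]\to\mathbb{R}$ defined by $g_\ell(s):=D(\Sin-s)-k\,\mu(s)\,\ell$, whose zeros in $(0,\Sin)$ are exactly the points $\bar s_\ell$ we seek. Under Assumption~\ref{hyp:mu}, $g_\ell$ is continuous (in fact $\mathcal{C}^1$), with $g_\ell(0)=D\,\Sin>0$ (because $\mu(0)=0$) and $g_\ell(\Sin)=-k\,\mu(\Sin)\,\ell<0$ (because $\mu(\Sin)>0$ and $\ell\geq 1$). Moreover, $s\mapsto-Ds$ is strictly decreasing and $s\mapsto-k\,\mu(s)\,\ell$ is nonincreasing (since $\mu$ is increasing), so $g_\ell$ is strictly decreasing on $[0,\Sin]$. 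By the intermediate value theorem and strict monotonicity, there is one and only one $\bar s_\ell\in(0,\Sin)$ with $g_\ell(\bar s_\ell)=0$.

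For the strict monotonicity of $(\bar s_\ell)_{\ell\geq 1}$, I would simply compare the two functions at $\bar s_\ell$:
\[
g_{\ell+1}(\bar s_\ell)=g_\ell(\bar s_\ell)-k\,\mu(\bar s_\ell)=-k\,\mu(\bar s_\ell)<0,
\]
since $\bar s_\ell>0$ implies $\mu(\bar s_\ell)>0$. Combined with $g_{\ell+1}(0)>0$ and the strict monotonicity of $g_{\ell+1}$, this forces the unique root $\bar s_{\ell+1}$ of $g_{\ell+1}$ to lie in $(0,\bar s_\ell)$, giving $\bar s_{\ell+1}<\bar s_\ell$.

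Finally, for the limit, the sequence $(\bar s_\ell)_{\ell\geq 1}$ is decreasing and bounded below by $0$, hence converges to some limit $\bar s_\infty\in[0,\bar s_1]$. Rewriting the defining identity as
\[
k\,\mu(\bar s_\ell)=\frac{D(\Sin-\bar s_\ell)}{\ell}\leq\frac{D\,\Sin}{\ell},
\]
and letting $\ell\to\infty$ gives $\mu(\bar s_\infty)=0$ by continuity of $\mu$. Since $\mu$ is strictly positive on $(0,+\infty)$, this forces $\bar s_\infty=0$, which concludes the proof. No step here is really hard; the only thing to keep track of is that $\mu$ being increasing with $\mu(0)=0$ and $\mu>0$ on $(0,\infty)$ is used both to get strict monotonicity of $g_\ell$ (for uniqueness) and to rule out a positive limit (for $\bar s_\ell\to 0$).
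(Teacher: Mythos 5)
Your proof is correct and follows essentially the same route as the paper: same function $g_\ell$, same intermediate-value/monotonicity argument for existence and uniqueness, and the same rewriting of the defining identity to show $\mu(\bar s_\ell)\to 0$. The only cosmetic difference is in the monotonicity step (you evaluate $g_{\ell+1}$ at $\bar s_\ell$, while the paper notes $(g_\ell(s))_\ell$ is strictly decreasing for each fixed $s>0$), but these are the same observation.
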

\begin{proof}
The map $g_\ell:s\mapsto D(\Sin-s)-k\,\mu( s)\,\ell$ is strictly decreasing, $g_\ell(0)=D \Sin>0$,  $g_\ell(\Sin)=-k\,\mu(\Sin)\ell<0$ then \eqref{eq:defsbar} admits a unique solution in $(0,\Sin)$. Moreover, for every $s> 0$, the sequence $(g_\ell(s))_{\ell \geq 1}$ is strictly decreasing then $(\bar s_\ell)_{\ell\geq 1}$ is also strictly decreasing and
\[
\lim_{\ell \to \infty} \mu(\bar s_\ell) = \lim_{\ell \to \infty} \frac{D\,(\Sin-\bar s_\ell)}{k\,\ell} = 0
\]
then, by Assumption~\ref{hyp:mu}, $\lim_{\ell \to \infty} \bar s_\ell = 0$.
\end{proof}

\begin{lemma}
\label{lem:rapprochement}
For every $s \in \RR_+$, $\ell \in \NN^*$ and $t\geq 0$,
\begin{enumerate}
\item if $s< \bar s_\ell$, then $u\mapsto \phi(\ell,s,u)$ is strictly increasing from $\mathbb{R}_+$ to $[s,\bar s_\ell)$;
\item if $s> \bar s_\ell$, then $u\mapsto \phi(\ell,s,u)$ is strictly decreasing from $\mathbb{R}_+$ to $(\bar s_\ell,s]$.
\end{enumerate}
In particular
\[
|s - \bar s_\ell| \geq \left|\phi(\ell,s,t) - \bar s_\ell \right|.
\]
\end{lemma}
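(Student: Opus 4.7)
The plan is to analyse the sign of $g_\ell(s):=D(\Sin-s)-k\,\mu(s)\,\ell$ on either side of $\bar s_\ell$ and then exploit uniqueness of the solution to the ODE \eqref{eq:flow}.

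First, I would observe that, by Lemma~\ref{lem:defsbar}, the map $g_\ell$ is continuous and strictly decreasing on $\mathbb{R}_+$ with its unique zero at $\bar s_\ell$. Consequently $g_\ell>0$ on $[0,\bar s_\ell)$ and $g_\ell<0$ on $(\bar s_\ell,+\infty)$. The constant trajectory $u\mapsto \bar s_\ell$ is a solution of \eqref{eq:flow} with initial condition $\bar s_\ell$. Since $g_\ell$ is (locally) Lipschitz under Assumption~\ref{hyp:mu} (or Remark~\ref{remark:mu.lipschitz}), the Cauchy--Lipschitz theorem ensures uniqueness of the flow, so no trajectory starting at $s\neq \bar s_\ell$ can ever cross the constant trajectory $\bar s_\ell$.

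Next, I would split into the two cases. If $s<\bar s_\ell$, then by the non-crossing property just stated, $\phi(\ell,s,u)<\bar s_\ell$ for all $u\geq 0$; therefore $g_\ell(\phi(\ell,s,u))>0$ and $u\mapsto \phi(\ell,s,u)$ is strictly increasing. Being bounded from above by $\bar s_\ell$, the trajectory admits a limit $s_\infty\in(s,\bar s_\ell]$ as $u\to+\infty$, and continuity of $g_\ell$ forces $g_\ell(s_\infty)=0$, hence $s_\infty=\bar s_\ell$. Combined with $\phi(\ell,s,0)=s$ and continuity in $u$, the image of $u\mapsto\phi(\ell,s,u)$ is exactly $[s,\bar s_\ell)$. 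The case $s>\bar s_\ell$ is symmetric: $g_\ell(\phi(\ell,s,u))<0$ yields strict decrease, and the same monotone convergence plus fixed-point argument gives image $(\bar s_\ell,s]$.

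Finally, the displayed inequality follows directly: in case~1, $\phi(\ell,s,t)\in[s,\bar s_\ell)$ yields $|\phi(\ell,s,t)-\bar s_\ell|=\bar s_\ell-\phi(\ell,s,t)\leq \bar s_\ell-s=|s-\bar s_\ell|$, and analogously in case~2; the case $s=\bar s_\ell$ is trivial as $\phi(\ell,s,\cdot)\equiv \bar s_\ell$. There is really no obstacle here: the whole argument is a standard phase-line analysis for an autonomous scalar ODE with a single attracting equilibrium, the only mild point being to invoke uniqueness to justify non-crossing of trajectories.
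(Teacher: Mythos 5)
Your proof is correct and follows essentially the same phase-line argument as the paper: use the strict monotonicity of $g_\ell$ (with $g_\ell(\bar s_\ell)=0$) to read off the sign of the derivative, and use uniqueness of the flow to prevent crossing of the equilibrium trajectory (the paper routes this through Lemma~\ref{prop.monotony.flow}, which is itself a consequence of Cauchy--Lipschitz). In fact you are slightly more complete than the paper's proof, which bounds $s\leq\phi(\ell,s,t)<\bar s_\ell$ but does not explicitly verify that the limit as $u\to\infty$ is $\bar s_\ell$, i.e.\ that the image is all of $[s,\bar s_\ell)$; your monotone-limit plus $g_\ell(s_\infty)=0$ argument supplies that step.
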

\begin{proof}
By Lemma~\ref{prop.monotony.flow}, if $s< \bar s_\ell$ then $\phi(\ell,s,t) < \bar s_\ell$ for every $t\geq 0$. On $[0, \bar s_\ell)$, $\partial_t \phi(\ell,\cdot,t)$ is strictly positive because, by Assumption~\ref{hyp:mu}, $g_\ell:s\mapsto D(\Sin-s)-k\,\mu( s)\,\ell$ is strictly decreasing and $g_\ell(\bar s_\ell)=0$. Finally,
\[
s \leq \phi(\ell,s,t) <  \bar s_\ell.
\]
In the same way, on $(\bar s_\ell,+\infty)$, $\partial_t \phi(\ell,\cdot,t)$ is strictly negative and $s \geq \phi(\ell,s,t) >  \bar s_\ell$ for $s\geq \bar s_\ell$ which ends the proof.
\end{proof}

\begin{corollary}
\label{cor:monotonyphit}
For every $s_0,s_1,s_2 \in \RR_+$ and $\ell \in \NN^*$ satisfying
$s_0 \geq s_1 \geq s_2 > \bar s_\ell$ or $s_0 \leq s_1 \leq s_2 < \bar s_\ell$ then
\[
\phit(\ell,s_0,s_2) = \phit(\ell,s_0,s_1)+\phit(\ell,s_1,s_2)<+\infty \,.
\]
\end{corollary}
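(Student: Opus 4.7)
The plan is to reduce the claim to two facts: (i) finiteness of each $\phit$ value, which follows from the monotonicity of the flow provided by Lemma~\ref{lem:rapprochement}, and (ii) additivity of the inverse times, which is nothing but the semigroup/flow property of the autonomous ODE \eqref{eq:flow} for a fixed number $\ell$ of bacteria.

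First, I would deal with finiteness. By the definition of $\phit$, the value $\phit(\ell,a,b)$ is finite exactly when $b\in \ensco{a}{\bar s_\ell}$. In the first case $s_0\geq s_1\geq s_2>\bar s_\ell$, we have $s_1,s_2\in (\bar s_\ell,s_0]=\ensco{s_0}{\bar s_\ell}$ and $s_2\in(\bar s_\ell,s_1]=\ensco{s_1}{\bar s_\ell}$, so the three quantities $\phit(\ell,s_0,s_1)$, $\phit(\ell,s_1,s_2)$, $\phit(\ell,s_0,s_2)$ are all finite. The second case $s_0\leq s_1\leq s_2<\bar s_\ell$ is symmetric, using $\ensco{s_0}{\bar s_\ell}=[s_0,\bar s_\ell)$.

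Next, I would prove the equality. Set $t_1:=\phit(\ell,s_0,s_1)$ and $t_2:=\phit(\ell,s_1,s_2)$, so that $\phi(\ell,s_0,t_1)=s_1$ and $\phi(\ell,s_1,t_2)=s_2$. Since the ODE defining $\phi$ is autonomous in $t$, the flow satisfies the semigroup property
\[
\phi(\ell,s_0,t_1+t_2)=\phi\bigl(\ell,\phi(\ell,s_0,t_1),t_2\bigr)=\phi(\ell,s_1,t_2)=s_2.
\]
By injectivity of $u\mapsto \phi(\ell,s_0,u)$ on $\RR_+$ (consequence of the strict monotonicity given by Lemma~\ref{lem:rapprochement}), the unique $t\geq 0$ such that $\phi(\ell,s_0,t)=s_2$ is $t_1+t_2$, hence $\phit(\ell,s_0,s_2)=t_1+t_2$, which is the claimed identity.

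There is no real obstacle here: the only subtlety is to check that the hypotheses on $s_0,s_1,s_2$ place all three points on the same side of the equilibrium $\bar s_\ell$ and in the correct order along the (monotone) trajectory, so that the semigroup composition is well defined and all inverse times are finite. The two assumed orderings have been tailored precisely for this purpose.
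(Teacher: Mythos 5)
Your argument is correct and follows essentially the same route as the paper's (very brief) proof: the paper likewise invokes the monotonicity from Lemma~\ref{lem:rapprochement} together with the flow (semigroup) property. You simply make the same two ingredients explicit.
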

\begin{proof}
The result directly comes from the monotony properties of the flow given by Lemma~\ref{lem:rapprochement} and the flow property.
\end{proof}

\begin{lemma}
\label{lem:phispos}
For all $\ell \in \mathbb{N}^*$, $s\geq 0$, and $t\geq \widetilde{t}\geq 0$, 
\[
\phis(\ell,s,t)>0 \Rightarrow \phis(\ell,s,\widetilde{t})>0.
\]
\end{lemma}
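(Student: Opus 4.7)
The plan is to reduce the statement to a monotonicity property of the map $u \mapsto \phi(\ell, 0, u)$, which is essentially immediate from the earlier results on the flow.

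First I would translate the hypothesis $\phis(\ell,s,t)>0$ into a condition on $s$ and $\phi(\ell,0,t)$. By the very definition of $\phis$, the value $\phis(\ell,s,t)$ equals the unique $s_0 \in \RR_+$ with $\phi(\ell,s_0,t) = s$ when $s \geq \phi(\ell,0,t)$, and equals $0$ otherwise. Since $s_0 \mapsto \phi(\ell,s_0,t)$ is strictly increasing (Lemma~\ref{prop.monotony.flow}), the boundary case $s=\phi(\ell,0,t)$ forces $s_0=0$, so $\phis(\ell,s,t)>0$ is equivalent to the strict inequality $s > \phi(\ell,0,t)$.

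Next I would show that $u \mapsto \phi(\ell,0,u)$ is non-decreasing on $\RR_+$. This follows from Lemma~\ref{lem:rapprochement}: since $0 < \bar s_\ell$, the trajectory starting at $0$ is strictly increasing (and stays in $[0,\bar s_\ell)$). Hence for any $0\leq \widetilde t \leq t$ we have $\phi(\ell,0,\widetilde t) \leq \phi(\ell,0,t)$.

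Combining these two observations yields the lemma: if $\phis(\ell,s,t)>0$, then $s > \phi(\ell,0,t) \geq \phi(\ell,0,\widetilde t)$, so by the same characterization applied at time $\widetilde t$, we conclude $\phis(\ell,s,\widetilde t)>0$. There is no real obstacle here; the argument is essentially a one-line consequence of the monotonicity of the flow in its time variable from the initial condition $0$, the interest of the statement being merely to record this monotonicity in the language of $\phis$ for later use.
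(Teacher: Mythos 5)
Your proof is correct and follows essentially the same path as the paper's: characterize $\phis(\ell,s,u)>0$ as $s>\phi(\ell,0,u)$ via Lemma~\ref{prop.monotony.flow}, then use the monotonicity of $u\mapsto\phi(\ell,0,u)$ from Lemma~\ref{lem:rapprochement}.
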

\begin{proof}
On the one hand, for every $u\geq 0$, by Lemma~\ref{prop.monotony.flow} and definition of $\phis$, we have 
\[
\phis(\ell,s,u)>0 \Leftrightarrow s> \phi(\ell,0,u).
\]
From Lemma~\ref{lem:rapprochement} $u \mapsto  \phi(\ell,0,u)$ is increasing. Thus 
\[
\phis(\ell,s,t)>0 \Leftrightarrow s> \phi(\ell,0,t) \Rightarrow s> \phi(\ell,0,\widetilde{t}) \Leftrightarrow \phis(\ell,s,\widetilde{t})>0.  \qedhere
\]
\end{proof}
\begin{lemma}
\label{lemme.control.phi_moins_1}
For $\ell \in \NN^*$, $(s_0,s)\in [0,\bar s_1]^2$, such that $s_0\neq \bar s_\ell$ and $\phit(\ell,s_0,s)<\infty$, 
\[
\frac{|s-s_0|}{\max\{D\,\Sin,\,k\,\barmu\,\ell\}} \leq \phit(\ell,s_0,s) \leq \frac{|s-s_0|}{D\,|\bar s_\ell-s|}\,.
\]
\end{lemma}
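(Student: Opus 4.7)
The plan is to exploit the integral representation of the flow together with the defining equation of the equilibrium $\bar s_\ell$. Set $t_* := \phit(\ell, s_0, s)$ and write
\[
s - s_0 = \int_0^{t_*} F(\phi(\ell, s_0, u)) \, \dif u, \qquad F(y) := D(\Sin - y) - k\,\mu(y)\,\ell.
\]
By Lemma~\ref{lem:rapprochement}, as $(s_0,s) \in [0,\bar s_1]^2$ and $\phit(\ell,s_0,s)<\infty$ (so that $s$ lies between $s_0$ and $\bar s_\ell$), the trajectory $u \mapsto \phi(\ell, s_0, u)$ remains in $\ens{s_0}{\bar s_\ell} \subset [0,\bar s_1]$ on $[0,t_*]$. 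I will use this trapping in both bounds.

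For the lower bound, on $[0, \bar s_1]$ one has $0 \leq D(\Sin - y) \leq D\Sin$ and $0 \leq k\,\mu(y)\,\ell \leq k\,\barmu\,\ell$, so $|F(y)| \leq \max\{D\Sin,\, k\,\barmu\,\ell\}$. Hence
\[
|s - s_0| \leq \int_0^{t_*} |F(\phi(\ell,s_0,u))| \, \dif u \leq t_* \cdot \max\{D\Sin,\, k\,\barmu\,\ell\},
\]
which gives the left-hand inequality.

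For the upper bound, the key observation is that $F(\bar s_\ell)=0$ by definition of $\bar s_\ell$, so one can rewrite
\[
F(y) = D(\bar s_\ell - y) + k\,\ell\,(\mu(\bar s_\ell) - \mu(y)).
\]
Since $\mu$ is increasing, the two differences on the right have the same sign, so $|F(y)| \geq D\,|\bar s_\ell - y|$ for all $y$, and moreover $F(y)$ has the sign of $\bar s_\ell - y$. I now split according to the monotony of the flow given by Lemma~\ref{lem:rapprochement}. If $s_0 < \bar s_\ell$, then $u \mapsto \phi(\ell,s_0,u)$ is increasing and satisfies $\phi(\ell,s_0,u) \leq s < \bar s_\ell$ for $u \in [0,t_*]$, hence $|\bar s_\ell - \phi(\ell,s_0,u)| \geq |\bar s_\ell - s|$. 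If $s_0 > \bar s_\ell$, symmetrically the flow is decreasing and $\phi(\ell,s_0,u) \geq s > \bar s_\ell$, so again $|\bar s_\ell - \phi(\ell,s_0,u)| \geq |\bar s_\ell - s|$. In both cases $F(\phi(\ell,s_0,u))$ has constant sign equal to that of $s - s_0$, and therefore
\[
|s - s_0| = \int_0^{t_*} |F(\phi(\ell,s_0,u))| \, \dif u \geq t_* \cdot D\,|\bar s_\ell - s|,
\]
which yields the right-hand inequality after dividing by $D\,|\bar s_\ell - s| > 0$ (nonzero because $\phit<\infty$ forces $s \neq \bar s_\ell$).

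The computation is essentially routine once one notices the rewrite of $F$ using $F(\bar s_\ell)=0$; the only subtle point is to invoke Lemma~\ref{lem:rapprochement} correctly to ensure both that the trajectory does not escape $[0,\bar s_1]$ (for the upper bound on $|F|$) and that $\phi(\ell,s_0,u)$ lies on the far side of $s$ from $\bar s_\ell$ throughout $[0,t_*]$ (for the lower bound on $|F|$). I do not expect any serious obstacle.
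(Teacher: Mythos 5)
Your proof is correct and follows essentially the same route as the paper's: the same integral representation of the flow, the same rewriting of the drift $F$ using $F(\bar s_\ell)=0$, and the same monotonicity/trapping argument from Lemma~\ref{lem:rapprochement} to bound $|F|$ along the trajectory. The only cosmetic difference is that you argue uniformly with $|F|$ and absolute values, whereas the paper's proof handles the two cases $s_0\leq s<\bar s_\ell$ and $s_0\geq s>\bar s_\ell$ in parallel with signed inequalities before unifying with the $\max$; the content is identical.
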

\begin{proof}
As $\phit(\ell,s_0,s)<\infty$, then
\begin{align*}
s &= s_0 + \int_0^{\phit(\ell,s_0,s)}[D(\Sin-\phi(\ell,s_0,u))-k\,\mu(\phi(\ell,s_0,u))\,\ell]\,\dif u
\\
&= s_0 + \int_0^{\phit(\ell,s_0,s)}[D(\bar s_\ell-\phi(\ell,s_0,u))+k\,\left(\mu(\bar s_\ell)-\mu(\phi(\ell,s_0,u))\right)\,\ell]\,\dif u\,.
\end{align*}
The first equality will allow to obtain the lower bound and the second one will lead to the upper bound of $\phit(\ell,s_0,s)$.
Either $s_0\leq s < \bar s_\ell$ then, from Lemma~\ref{lem:rapprochement}, the flow $u \mapsto \phi(\ell,s_0,u)$ is increasing and for all $u\in[0,\,\phit(\ell,s_0,s)]$, $s_0\leq \phi(\ell,s_0,u) \leq s < \bar s_\ell$ . As $\mu$ is increasing, we then obtain,
\begin{align*}
\phit(\ell,s_0,s)\,D(\bar s_\ell-s)
\leq s-s_0 \leq  \phit(\ell,s_0,s)\,D\,\Sin\,.
\end{align*}
Or $s_0 \geq s> \bar s_\ell$ then the flow $u \mapsto \phi(\ell,s_0,u)$ is decreasing and $s_0\geq \phi(\ell,s_0,u) \geq s > \bar s_\ell$ for all $u\in[0,\,\phit(\ell,s_0,s)]$. As $\phi(\ell,s_0,u)\leq \bar s_1\leq \Sin$ and $\mu$ is increasing, we then obtain
\begin{align*}
- \phit(\ell,s_0,s)\,k\,\barmu\,\ell
\leq s-s_0 \leq  \phit(\ell,s_0,s)\,D\,(\bar s_\ell-s)
\end{align*}
and the result holds.
\end{proof}

\medskip

\begin{lemma}
\label{lemme.control.phi_moins_s}
\begin{enumerate}
\item \label{lemme.control.phi_moins_s.maj} For all $(\ell,s,\varepsilon)\in \NN^* \times [0,\bar s_1]\times \RR_+$ such that $\phis(\ell,s,\varepsilon)\leq \bar s_1$,
\[
|s-\phis(\ell,s,\varepsilon)| \leq \varepsilon \max\{D\,\Sin,\,k\,\barmu\,\ell\}\,.
\]
\item \label{lemme.control.phi_moins_s.min} For all $(\ell,s,\varepsilon)\in \NN^* \times \RR_+\times \RR_+$ such that $\phis(\ell,s,\varepsilon)>0$, 
\[
D|s-\bar s_\ell|\,\varepsilon \leq |s-\phis(\ell,s,\varepsilon)|\,.
\]
\end{enumerate}
\end{lemma}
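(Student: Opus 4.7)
The plan is to reduce both inequalities to Lemma~\ref{lemme.control.phi_moins_1} via the defining identity $\phi(\ell,\phis(\ell,s,\varepsilon),\varepsilon)=s$, which holds whenever $\phis(\ell,s,\varepsilon)>0$ (and trivially whenever $s=\phi(\ell,0,\varepsilon)$). Set $s_0:=\phis(\ell,s,\varepsilon)$; then in both settings we have $\phi(\ell,s_0,\varepsilon)=s$.

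For part~\ref{lemme.control.phi_moins_s.maj}, the hypothesis places both $s$ and $s_0$ in $[0,\bar s_1]$. If $s_0=\bar s_\ell$, then by the equilibrium property $s=\phi(\ell,\bar s_\ell,\varepsilon)=\bar s_\ell=s_0$ and the inequality is trivial. Otherwise Lemma~\ref{lem:rapprochement} forces $s\in\ens{s_0}{\bar s_\ell}$ with $s\ne s_0$, whence $\phit(\ell,s_0,s)=\varepsilon<\infty$. The lower bound in Lemma~\ref{lemme.control.phi_moins_1} then gives $|s-s_0|/\max\{D\,\Sin,k\,\barmu\,\ell\}\leq\varepsilon$, which is exactly the claim.

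For part~\ref{lemme.control.phi_moins_s.min}, the hypothesis $s_0>0$ again yields $\phi(\ell,s_0,\varepsilon)=s$, so $\phit(\ell,s_0,s)=\varepsilon$. Applying the upper bound of Lemma~\ref{lemme.control.phi_moins_1} would give $\varepsilon\leq|s-s_0|/(D\,|\bar s_\ell-s|)$ and rearranging yields the desired inequality. The one subtlety is that Lemma~\ref{lemme.control.phi_moins_1} is stated under the restriction $(s_0,s)\in[0,\bar s_1]^2$, while here $s$ may exceed $\bar s_1$ (when $s_0>\bar s_1>\bar s_\ell$, the decreasing flow can still have $s>\bar s_1$). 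However, the upper-bound half of that proof only uses the monotonicity of $\mu$ together with the sandwich $\phi(\ell,s_0,u)\in\ens{s_0}{s}$ on $u\in[0,\phit(\ell,s_0,s)]$ (itself a direct consequence of Lemma~\ref{lem:rapprochement}); the constraint $s\leq\bar s_1$ is invoked \emph{only} in the lower-bound part (to bound $\mu(\phi)$ above by $\barmu$ via $\phi\leq\bar s_1$). I would therefore either add a one-sentence remark that the upper bound of Lemma~\ref{lemme.control.phi_moins_1} extends verbatim to arbitrary $s\geq 0$, or equivalently reproduce the short integral computation
\[
|s-s_0|=\left|\int_0^\varepsilon\bigl[D(\bar s_\ell-\phi(\ell,s_0,u))+k\ell(\mu(\bar s_\ell)-\mu(\phi(\ell,s_0,u)))\bigr]\,\dif u\right|\geq D|\bar s_\ell-s|\,\varepsilon,
\]
using only the sign of $\bar s_\ell-\phi$ being constant on $[0,\varepsilon]$ and $\mu$ increasing.

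The only non-routine step is recognising that this extension of the upper bound in Lemma~\ref{lemme.control.phi_moins_1} beyond $[0,\bar s_1]^2$ is legitimate; everything else is a direct substitution $\phit(\ell,s_0,s)=\varepsilon$.
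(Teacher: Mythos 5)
Your approach is genuinely different from the paper's: the paper proves the lemma by writing $s-\phis(\ell,s,\varepsilon)$ as an integral over $[0,\varepsilon]$ and estimating the integrand directly, while you reduce both bounds to Lemma~\ref{lemme.control.phi_moins_1} via the substitution $\phit(\ell,s_0,s)=\varepsilon$. The reduction is natural, avoids repeating the integral estimate, and your observation that the upper bound of Lemma~\ref{lemme.control.phi_moins_1} extends verbatim beyond $[0,\bar s_1]^2$ (only the lower bound uses $\phi\leq\bar s_1$ to dominate $\mu(\phi)$ by $\mu(\bar s_1)$) is correct and is exactly what the extension requires; the paper sidesteps this by not invoking Lemma~\ref{lemme.control.phi_moins_1} at all.

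There is, however, a gap in part~\ref{lemme.control.phi_moins_s.maj}. Your reduction is keyed on the identity $\phi(\ell,\phis(\ell,s,\varepsilon),\varepsilon)=s$, which you correctly note holds only when $\phis(\ell,s,\varepsilon)>0$ or $s=\phi(\ell,0,\varepsilon)$. But part~\ref{lemme.control.phi_moins_s.maj} also allows $\phis(\ell,s,\varepsilon)=0$ with $s<\phi(\ell,0,\varepsilon)$, and there your claim $\phit(\ell,s_0,s)=\varepsilon$ is false — this is precisely the case the paper treats separately in the last display of its proof. The fix stays inside your framework: in that case $s_0=0<s<\phi(\ell,0,\varepsilon)<\bar s_\ell$, so by monotonicity of $u\mapsto\phi(\ell,0,u)$ one has $\phit(\ell,0,s)<\varepsilon<\infty$, and the lower bound of Lemma~\ref{lemme.control.phi_moins_1} then yields $|s-0|/\max\{D\Sin,k\,\barmu\,\ell\}\leq\phit(\ell,0,s)<\varepsilon$, which is the claim. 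You should replace "$\phit(\ell,s_0,s)=\varepsilon$" by "$\phit(\ell,s_0,s)\leq\varepsilon$" and cover this case explicitly; as written the proof of part~\ref{lemme.control.phi_moins_s.maj} only treats the two settings named in your opening sentence.
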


\begin{remark}
\label{phis0.less.sin}
If $s\leq \bar s_1$, then assumption $\phis(\ell,s,\varepsilon)\leq \bar s_1$ is satisfied when $\varepsilon\leq \frac{\bar s_1-s}{k\,\barmu\,\ell}$.
Indeed, from Lemmas~\ref{lem:rapprochement} and \ref{lem:defsbar}, $u\mapsto \phi(\ell,\bar s_1,u)$ is decreasing, then for all $u\geq 0$, $\phi(\ell,\bar s_1,u)\leq \bar s_1$ and
\begin{align*}
\phi(\ell,\bar s_1,\varepsilon) = \bar s_1 + \int_0^\varepsilon [D(\Sin-\phi(\ell,\bar s_1, u))-k\,\mu(\phi(\ell,\bar s_1, u))\,\ell]\,\dif u
	\geq \bar s_1-\varepsilon\,k\,\barmu\,\ell\,.
\end{align*}
Then $\varepsilon\leq \frac{\bar s_1-s}{k\,\barmu\,\ell}$ implies that $s\leq \phi(\ell,\bar s_1,\varepsilon)$. Hence, either $\phis(\ell,s,\varepsilon)=0\leq \bar s_1$, or $\phi(\ell,\phis(\ell,s,\varepsilon),\varepsilon) = s \leq \phi(\ell,\bar s_1,\varepsilon)$ and then, by Lemma~\ref{prop.monotony.flow}, $\phis(\ell,s,\varepsilon)\leq \bar s_1$.
\end{remark}

\begin{proof}[Proof of Lemma~\ref{lemme.control.phi_moins_s}]
First, we assume that $\phis(\ell,s,\varepsilon)>0$. 
By definition of $\phis$,
\begin{align*}
s & = \phis(\ell,s,\varepsilon) + \int_0^{\varepsilon}[D(\Sin-\phi(\ell,\phis(\ell,s,\varepsilon), u))-k\,\mu(\phi(\ell,\phis(\ell,s,\varepsilon), u))\,\ell]\,\dif u\\
& = \phis(\ell,s,\varepsilon) + \int_0^{\varepsilon}[D(\bar s_\ell-\phi(\ell,\phis(\ell,s,\varepsilon), u))+k\,(\mu(\bar s_\ell)-\mu(\phi(\ell,\phis(\ell,s,\varepsilon), u))\,\ell)]\,\dif u\,.
\end{align*}
On the one hand, if $s\leq \bar s_\ell$, then for all $u\in  [0,\varepsilon]$, $\phis(\ell,s,\varepsilon)\leq \phi(\ell,\phis(\ell,s,\varepsilon), u) \leq s \leq \bar s_\ell$, hence, from the second equality and as $\mu$ is increasing,
\[
s-\phis(\ell,s,\varepsilon)  \geq  D(\bar s_\ell-s)\,\varepsilon>0\,.
\]
In the same way, if $s\geq \bar s_\ell$, then for all $u\in  [0,\varepsilon]$, $\phis(\ell,s,\varepsilon)\geq \phi(\ell,\phis(\ell,s,\varepsilon), u) \geq s \geq \bar s_\ell$, hence
\[
\phis(\ell,s,\varepsilon) - s \geq  D(s-\bar s_\ell)\,\varepsilon>0
\]
and the lower bound of $|s-\phis(\ell,s,\varepsilon)|$ then holds.
 
On the other hand, if $s \in [0,\bar s_1]$ and $\phis(\ell,s,\varepsilon) \in [0,\bar s_1]$, then for all $u\in[0,\varepsilon]$, $\phi(\ell,\phis(\ell,s,\varepsilon), u)\leq \bar s_1$ and from the first equality,
\[
	 |s-\phis(\ell,s,\varepsilon)| \leq \varepsilon \max\{D\,\Sin,\,k\,\barmu\,\ell\}\,,
\]	
then the upper bound of $|s-\phis(\ell,s,\varepsilon)|$ holds for $0<\phis(\ell,s,\varepsilon)\leq \bar s_1$.

If $\phis(\ell,s,\varepsilon)=0$, then $s\leq \phi(\ell,0,\varepsilon)$ and
\begin{align*}
|s-\phis(\ell,s,\varepsilon)| =s 
& \leq  \int_0^{\varepsilon}[D(\Sin-\phi(\ell,0,u))-k\,\mu(\phi(\ell,0,u))\,\ell]\,\dif u
 \leq \varepsilon D\,\Sin
\end{align*}
and the upper bound of $|s-\phis(\ell,s,\varepsilon)| $ also holds for $\phis(\ell,s,\varepsilon)=0$.
\end{proof}

\subsection{Preliminary results on the jumps}
\label{sec:preliminary.jumps}

In contrast with the previous section, in the present one, we let the bacteria evolve. Let $(T_i)_{i \in \NN^*}$ be the sequence of the jump times of the process $(X_t)_{t\geq 0}$: 
\begin{align*}
T_i := \begin{cases}
\inf\{t> 0, \, X_{t-}\neq X_t\} & \text{if $i=1$};\\
\inf\{t>T_{ i-1}, \, X_{t-}\neq X_t\}& \text{if $i>1$.}
\end{cases}
\end{align*}

 Let us also introduce a classical notation in the study of piecewise deterministic Markov process (see \cite{BLMZ15} for instance). Let $(x_0,s_0)\in \mathbb{N}^*\times \mathbb{R}^+$, $0\leq t_1\leq \dots \leq t_{N+1}$ and let $\Psi\left(x_0,s_0,(t_j,x_j)_{1\leq j\leq N},t_{N+1}\right)$ be the iterative solution of
\begin{align}
\label{eq:defPsi}
\begin{cases}
\Psi(x_0,s_0,t_1) = \phi(x_0,s_0,t_1), \\
\Psi\left(x_0,s_0,(t_j,x_j)_{1\leq j\leq i},t_{i+1}\right)
	= \phi\Big(x_i,\Psi\left(x_0,s_0,(t_j,x_j)_{1\leq j\leq i-1},t_i\right),\,t_{i+1}-t_i\Big).
 \end{cases}
\end{align}
Then $\Psi\left(x_0,s_0,(t_j,x_j)_{1\leq j\leq N},t\right)$ represents the substrate concentration at time $t$, given the initial condition is $(x_0,s_0)$ and that the bacterial population jumps from $x_{i-1}$ to $x_i$ at time $t_i$ for $i=1,\cdots,N$. 

\medskip

For all $n\in\NN^*$, $u_1,\dots u_n>0$, let set
\[
	\mathcal{E}_D(u_1,\,\dots,u_n):=\bigcap_{i=1}^{n}\{X_{u_i}=X_0 -i\} \cap \{T_i=u_i\}
\]
and
\[
	\mathcal{E}_B(u_1,\,\dots,u_n):=\bigcap_{i=1}^{n}\{X_{u_i}=X_0 +i\} \cap \{T_i=u_i\}
\]
the event ``the first $n$ events are washouts (respectively divisions) and occur at time $u_1,\dots u_n$''.

\medskip

In Lemma~\ref{prop.minore.proba.evt} below, we use Poisson random measures to bound the probability of one event by the probability of this event conditionally to having followed a certain path (no jump, successive washouts or successive divisions). 

\begin{lemma}
\label{prop.minore.proba.evt}
Let $A$ be a measurable set (of the underlying probability space). 
We have the following inequalities. 
\begin{enumerate}
\item For all $\delta \geq 0$ and $(x,s) \in \NN^*\times \mathbb{R}_+$ 
\[
	\PP_{(x,s)}(A )\geq \PP_{(x,s)}(A \cap \{T_1 > \delta\}) \geq e^{-(D+\mu(\bar s_1 \vee s))\,x\,\delta}\,\PP_{(x,s)}(A \ | \ T_1 > \delta)\,.
\]
\item For all $\delta \geq 0$, $(x,s) \in \NN^*\times  \mathbb{R}_+$ and $1\leq n\leq x$,
\begin{align*}
\PP_{(x,s)}(A)
& \geq	
	\PP_{(x,s)}
	\Big(A \cap 
		\bigcap_{i=1}^{n}\Big\{\{T_i\ \leq\delta\} \cap \{ X_{T_i} = x-i \}\Big\}\Big)
\\
	&\geq \int_0^{\delta} \int_{u_1}^{\delta}\dots \int_{u_{n-1}}^{\delta}
	\left(\prod_{k=x-n+1}^{x}D\,k\right)\,
	e^{-(D+\mu(\bar s_1 \vee s))\,\left(x\,u_1+
	\sum_{i=1}^{n-1}(x-i)\,(u_{i+1}-u_{i})\right)}\,	
\\
	 &   \hspace{1cm} \times
	\PP_{(x,s)}
	\Big(A \ \Big| \mathcal{E}_D(u_1,\dots,u_n) \Big)\,
	\dif u_{n}\dots\,\dif u_1\,.
\end{align*}

\item For all $\delta \geq 0$, $(x,s) \in \NN^*\times  \mathbb{R}_+$ and all $n\geq 1$
\begin{align*}
\PP_{(x,s)}(A)
& 
	\geq \PP_{(x,s)}
	\Big(A \cap 
		\bigcap_{i=1}^{n}\Big\{\{T_i\ \leq\delta\} \cap \{ X_{T_i} = x+i \}\Big\}\Big)
\\
	&\geq \int_0^{\delta} \int_{u_1}^{\delta}\dots \int_{u_{n-1}}^{\delta}
	\left(\prod_{k=1}^{n}\mu\left(\Psi(x,s,(u_i,x+i)_{1\leq i \leq k-1},u_k)\right)\,(x+k-1)\right)\,
\\
	 &   \hspace{1cm} \times
	e^{-(D+\mu(\bar s_1 \vee s))\,\left(x\,u_1+
	\sum_{i=1}^{n-1}(x+i)\,(u_{i+1}-u_{i})\right)}\,	
\\
	 &   \hspace{1cm} \times
	\PP_{(x,s)}
	\Big(A \ \Big| \mathcal{E}_B(u_1,\dots, u_n) \Big)\,
	\dif u_{n}\dots\,\dif u_1\,.
\end{align*}
\end{enumerate}
\end{lemma}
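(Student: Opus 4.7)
The plan is to exploit the explicit joint density of the first $n$ jump times of the PDMP (obtained via the standard Poisson-random-measure representation driving divisions and washouts), restricted to the event of a prescribed sequence of jump types, and then to apply a uniform bound on the jump rate. I will handle the three items in turn; the first is essentially immediate and the other two use the same inductive scheme.

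For item (1), the inequality $\PP_{(x,s)}(A) \geq \PP_{(x,s)}(A \cap \{T_1 > \delta\})$ is trivial. By the strong Markov property at time $\delta$ on the event $\{T_1 > \delta\}$,
\[
\PP_{(x,s)}(A \cap \{T_1 > \delta\}) = \PP_{(x,s)}(T_1 > \delta)\,\PP_{(x,s)}(A \mid T_1 > \delta),
\]
and by standard PDMP theory $\PP_{(x,s)}(T_1 > \delta) = \exp\bigl(-x\int_0^\delta (D+\mu(\phi(x,s,u)))\,\dif u\bigr)$. Lemma~\ref{lem:rapprochement} and the fact $\bar s_x \leq \bar s_1$ yield $\phi(x,s,u) \in [\min(s,\bar s_x),\max(s,\bar s_x)] \subset [0, \bar s_1 \vee s]$; monotonicity of $\mu$ gives $\mu(\phi(x,s,u)) \leq \mu(\bar s_1 \vee s)$, whence the claimed exponential bound.

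For items (2) and (3), observe that on the event $\mathcal{E}_D(u_1,\dots,u_n)$ (resp.~$\mathcal{E}_B$), the state trajectory between jumps is deterministic: just after the $i$-th jump the population is $x \mp i$ and the substrate equals $\Psi\bigl(x,s,(u_j,x\mp j)_{1\leq j\leq i-1},u_i\bigr)$. Consequently the joint law of $(T_1,\dots,T_n)$ on the washout event factorises with density, for $0 < u_1 < \dots < u_n \leq \delta$,
\[
\Bigl(\prod_{k=x-n+1}^x D\,k\Bigr)\exp\!\Bigl(-x\!\int_0^{u_1}\!(D+\mu(\phi(x,s,u)))\,\dif u - \sum_{i=1}^{n-1}(x-i)\!\int_{u_i}^{u_{i+1}}\!(D+\mu(\phi(x-i,S_{u_i},u-u_i)))\,\dif u\Bigr),
\]
and analogously on the division event with the product $\prod_{k=1}^n \mu\bigl(\Psi(x,s,(u_j,x+j)_{1\leq j\leq k-1},u_k)\bigr)(x+k-1)$ and populations $x+i$ in the exponential. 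The disintegration formula then writes $\PP_{(x,s)}(A \cap \mathcal{E}_D(u_1,\dots,u_n)\text{-region})$ as the integral of this density against $\PP_{(x,s)}(A \mid \mathcal{E}_D(u_1,\dots,u_n))$ by the strong Markov property.

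The key inductive bound is that on the whole trajectory $S_u \leq \bar s_1 \vee s$: this holds at $u=0$, and if $S_{u_i} \leq \bar s_1 \vee s$ then in the next inter-jump interval the population is $x \mp i$ with $\bar s_{x\mp i} \leq \bar s_1$, so Lemma~\ref{lem:rapprochement} confines the flow to $[\min(S_{u_i},\bar s_{x\mp i}),\max(S_{u_i},\bar s_{x\mp i})] \subset [0,\bar s_1 \vee s]$. Substituting $\mu(\cdot) \leq \mu(\bar s_1 \vee s)$ in the exponential, the sum telescopes to the stated expression $x u_1 + \sum_{i=1}^{n-1}(x \mp i)(u_{i+1}-u_i)$, and integrating over $0 < u_1 < \dots < u_n \leq \delta$ yields the announced inequalities. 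There is no serious obstacle beyond bookkeeping: the only delicate point is maintaining the substrate bound $\bar s_1 \vee s$ across consecutive jumps, which is exactly what the monotonicity results of Section~\ref{sec.preliminary.flow} provide.
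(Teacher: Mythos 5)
Your proposal is correct and follows essentially the same route as the paper: use the PDMP structure to write the joint density of the first $n$ jump times on the prescribed event sequence, then bound the jump rate uniformly by $\mu(\bar s_1 \vee s)$. The only minor stylistic difference is that the paper obtains the substrate bound $S_u \le \bar s_1 \vee s$ in one step via a comparison with the one-individual flow $\phi(1,S_0,\cdot)$, whereas you re-derive it inductively interval by interval from Lemma~\ref{lem:rapprochement}; both arguments work and lead to the same telescoping in the exponent.
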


\begin{proof}
Under the event $\{X_t\geq 1\}$ (or equivalently under the event $\{X_u\geq 1 \text{ for } u \in [0, t]\}$ as $\{0\}$ is an absorbing state for the process $(X_t)_t$), from the comparison theorem and Lemma~\ref{lem:rapprochement}, for all $0\leq u \leq t$ we have  $S_u \leq \phi(1,S_0,u)\leq S_0\vee \bar s_1$.
Let $(x,s) \in \NN^*\times  \mathbb{R}_+$, the individual jump rate $\mu(S_t)$ of the process $(X_t,S_t)$ starting from $(x,s)$ is then bounded by $\mu(\bar s_1\vee s)$.

The bounds established in the lemma are classical and based on the construction of the process $(X_t,S_t)$ from Poisson random measures: 
we consider two independent Poisson random measures $\mathcal{N}_d(\dif u, \dif j, \dif \theta)$ and $\mathcal{N}_w(\dif u, \dif j)$ defined on $\RR_+ \times \NN^* \times [0,1]$ and $\RR_+ \times \NN^*$ respectively, corresponding to the division and washout mechanisms respectively, with respective intensity measures
\begin{align*}
n_d(\dif u, \dif j, \dif \theta)
	 =
		\mu(\bar s_1 \vee s) \, \dif u \, \Big(\sum_{\ell \geq 1} \delta_{\ell}(\dif j) \Big)
		 \, \dif \theta
\qquad \text{and} \qquad
n_w(\dif u, \dif j)
	 =
		D \, \dif u \, \Big(\sum_{\ell \geq 1} \delta_{\ell}(\dif j) \Big)\,.
\end{align*}
Then the process $(X_t,S_t)$ starting from $(X_0,S_0)=(x,s)$ can be defined by 
\begin{align*}
(X_t,S_t) & =
	(x, \phi(x,s,t))\\
	&\quad
	+ \int_0^t \int_\mathbb{N^*} \int_0^1 \1_{\{j\leq X_{\umoins}\}}\,\1_{\{0\leq \theta \leq \mu(S_u)/\mu(\bar s_1 \vee s)\}}\,\\
	& \qquad \qquad \qquad \quad
		\left[ (1, \phi(X_\umoins +1, S_u, t-u)- \phi(X_\umoins, S_u, t-u)\right]\,\mathcal{N}_d(\dif u, \dif j, \dif \theta)
\\
&\quad
	+ \int_0^t \int_\mathbb{N^*} \1_{\{j\leq X_{\umoins}\}}\,
		\left[ (-1, \phi(X_\umoins -1, S_u, t-u)- \phi(X_\umoins, S_u, t-u)\right]\,\mathcal{N}_w(\dif u, \dif j)\,.
\end{align*}
We refer to \cite{CF15} for more details on this construction. 

1. By construction of the process, if $(X_0,S_0)=(x,s)$, we get $T_1 = T_{d}\wedge T_{w}\,$ where, $T_{d}$ is the time of the first jump of the process 
\[
  t\mapsto \mathcal{N}_{d}\left([0,t]\times \{x\}\times \left[0,\frac{\mu(\phi(x,s,u))}{\mu(\bar s_1 \vee s)}\right] \right)
\]
and $T_{w}$ is the time of the first jump of the process $t\mapsto \mathcal{N}_{w}([0,t]\times \{x\})$.

The distribution of $T_{d}$ is a non-homogeneous exponential distribution with parameter $\mu(\phi(x,s,u))\,x$,  \textit{i.e.} with the probability density function 
\[
t\mapsto \mu(\phi(x,s,t))\,x\,\exp\left({-\int_0^t \mu(\phi(x,s,u))\,x \dif u}\right).
\]
 The distribution of $T_{w}$ is a (homogeneous) exponential distribution with parameter $D\,x$. $T_{d}$ and $T_{w}$ are independent, then
\[
	\PP_{(x,s)}(T_1 > \delta) = e^{-\int_0^\delta (\mu(\phi(x,s,u))+D)\,x \, \dif u}
	\geq e^{-(D+\mu(\bar s_1 \vee s))\,x\,\delta}
\]
and the first result holds.

2. On the event $\bigcap_{i=1}^{k}\Big\{\{T_i\ =u_i\} \cap \{ X_{T_i} = x-i \}\Big\}$, the distribution of $T_{k+1}-u_k$ is a non-homogeneous exponential distribution with parameter $(\mu(\phi(x-k,S_{T_k},t))+D)\,(x-k)$
with $S_{T_k} = \Psi(x,s,(u_i,x-i)_{1\leq i \leq k-1}, u_k) \in (0,\bar s_1 \vee s)$, \textit{i.e.} with the probability density function (evaluated in $t$)
\begin{align*}
(\mu(\phi(x-k,S_{T_k},t))+D)\,(x-k)\,e^{-\int_0^t (\mu(\phi(x-k,S_{T_k},u))+D)\,(x-k) \dif u}
\\ \qquad \geq 
(\mu(\phi(x-k,S_{T_k},t))+D)\,(x-k)\,e^{-(\mu(\bar s_1 \vee s)+D)\,(x-k)\,t}
\end{align*}
 and on the event $\{T_{k+1}=u\}$, the event is a bacterial washout with probability $D/(\mu(\phi(x-k,S_{T_k},u))+D)$. We then obtain the second assertion.

3. The third assertion is obtained in the same way as the second one.
\end{proof}


\section{Proof of technical Lemmas}
\label{sec.proofs.lemmas} 

\subsection{Additional notation}
\label{sec.add.notation.app}

For all $n\geq \ell\geq 1$ and all $t\geq0$ , let $P_d(n,\ell,t)$ defined by
\begin{align*}
P_d(n,\ell,t)
	&= \int_0^{t} \int_{u_1}^{t}\dots \int_{u_{\ell-1}}^{t}
	\left(\prod_{k=n-\ell+1}^{n}D\,k\right)\,
	e^{-(D+\barmu)\,\left(n\,u_1+
	\sum_{i=1}^{\ell-1}(n-i)\,(u_{i+1}-u_{i})\right)}\,	
	\dif u_\ell \dots\,\dif u_1
\end{align*}
be the probability that the $\ell$ first events are deaths and occur in the time interval $[0,t]$ for a birth-death process, with \textit{per capita} birth rate $\barmu$ and death rate $D$, starting from $n$ individuals.

For all $n\geq \ell\geq 1$ and all $t\geq0$, let $P_b(n,\ell,t)$ defined by
\begin{align*}
P_b(n,\ell,t)
	&= \int_0^{t} \int_{u_1}^{t}\dots \int_{u_{\ell-1}}^{t}
	\left(\prod_{k=n}^{n+\ell-1}\barmu\,k\right)\,
	e^{-(D+\barmu)\,\left(n\,u_1+
	\sum_{i=1}^{\ell-1}(n+i)\,(u_{i+1}-u_{i})\right)}\,	
	\dif u_\ell \dots\,\dif u_1
\end{align*}
be the probability that the $\ell$ first events are births and occur in the time interval $[0,t]$ for a birth-death process, with \textit{per capita} birth rate $\barmu$ and death rate $D$, starting from $n$ individuals.

\begin{remark}
\label{rk.Pd.Pb.inc}
Both maps $t\mapsto P_d(n,\ell,t)$ and $t\mapsto P_b(n,\ell,t)$ are increasing.
\end{remark}

For all $L\in\NN^*$, $\Smin,\, \Smax$ such that $\bar s_L<\Smin \leq \Smax<\bar s_1$, we define the hitting time $T_{L,[\Smin,\Smax]}$ by
\[
T_{L,[\Smin,\Smax]}:=\inf\left\{t\geq 0,\, (X_t,S_t)\in B(L,[\Smin,\Smax])\right\}\,,
\]
where
\[
	B(L,[\Smin,\Smax]):=\{(\ell,\Smin) \, | \, \ell \in  \llbracket 1,L \rrbracket \text{ and } \bar s_\ell \geq \Smin\} \cup \{(\ell,\Smax) \, | \, \ell \in \llbracket 1,L \rrbracket \text{ and } \bar s_\ell \leq \Smax\}\,.
\]
In addition of being an hitting time of $\llbracket 1 , L \rrbracket \times [\Smin,\Smax]$, the boundary $B(L,[\Smin,\Smax])$ is choosen such that the process remains in this set during some positive time after $T_{L,[\Smin,\Smax]}$ if $\Smin<\Smax$. If $\Smin=\Smax$ then $B(L,[\Smin,\Smax])=\llbracket 1,L \rrbracket \times \{\Smin\}$.

\subsection{Proof of Lemma~\ref{de.partout.a.presque.partout}}
\label{sec:partout.a.partout}

Lemma~\ref{de.partout.a.presque.partout} is a consequence of Lemma~\ref{lemma.reach.T.Smin.Smax} below.

\medskip

\begin{lemma}
\label{lemma.reach.T.Smin.Smax}
Let $L \in \NN^*$, $\Smin,\Smax$ such that $\bar s_L<\Smin\leq\Smax<\bar s_1$, and let $(x,s)\in \llbracket 1, L\rrbracket \times [\bar s_L,\bar s_1]$,
\begin{enumerate}
\item \label{lemma.reach.T.Smin.Smax.s_inf_Smin} if $s\leq \Smin$, then for $\tau_0>\phit\left(1, s,\Smin\right)$,
\begin{align*}
\PP_{(x,s)}\left(T_{L,[\Smin,\Smax]}\leq \tau_0\right)	
	&\geq 
	e^{-(D+\barmu)\,(\tau_0-\delta)}\,P_d(x,x-1,\delta)\\
	&\geq 
	e^{-(D+\barmu)\,(\tau_0-\delta)}\,P_d(L,L-1,\delta)
\end{align*}
with $\delta:=(\tau_0-\phit\left(1, s,\Smin\right))\,\frac{D\,|\bar s_1-s|}{D\,|\bar s_1-s|+\max\{D\,\Sin,\,k\,\barmu\,L\}}$;

\item \label{lemma.reach.T.Smin.Smax.s_sup_Smax} if $s\geq\Smax$, then for $\tau_0>\phit\left(L, s,\Smax\right)$,
\begin{align*}
\PP_{(x,s)}\left(T_{L,[\Smin,\Smax]}\leq \tau_0\right)
	&\geq
	e^{-(D+\barmu)\,(\tau_0-\delta)\,L}\,\left(\mu(\bar s_L)/\barmu\right)^{L-x}\,
		P_b(x,L-x, \delta)
\\
	&\geq
	e^{-(D+\barmu)\,(\tau_0-\delta)\,L}\,
	\left(\mu(\bar s_L)/\barmu\right)^{L-1}\,P_b(1,L-1, \delta)
\end{align*}
with $\delta:=(\tau_0-\phit\left(L, s,\Smax\right))\,\frac{D\,|\bar s_L-s|}{D\,|\bar s_L-s|+\max\{D\,\Sin,\,k\,\barmu\,L\}}$;

\item  \label{lemma.reach.T.Smin.Smax.s_in_Smin_Smax} if $s\in (\Smin,\Smax)$, then for  $\tau_0>\phit\left(1, s,\Smax\right) \wedge \phit\left(L, s,\Smin\right)=:t^\star$
\begin{align*}
\PP_{(x,s)}\left(T_{L,[\Smin,\Smax]}\leq \tau_0\right)
&\geq
 e^{-(D+\barmu)\,\left(\tau_0-\delta_1-\delta_2\right)\,L}\\
& \quad		\times(\mu(\bar s_L)/\barmu)^{L-1}\,P_d(L,L-1, \delta_1)\,
		P_b(1,L-1, \delta_2)
\end{align*}
with $\delta_1:=\frac{\tau_0-t^\star}{2}\,\frac{D\,|\bar s_1-s|}{D\,|\bar s_1-s|+\max\{D\,\Sin,\,k\,\barmu\,L\}}$ and $\delta_2:=\frac{\tau_0-t^\star}{2}\,\frac{D\,|\bar s_L-s|}{D\,|\bar s_L-s|+\max\{D\,\Sin,\,k\,\barmu\,L\}}$.
\end{enumerate}
\end{lemma}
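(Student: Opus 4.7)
The plan is to treat the three cases separately; in each case I construct an explicit event involving a prescribed jump sequence in an initial window followed by no further jumps until $\tau_0$, and on this event the substrate deterministically hits $B(L,[\Smin,\Smax])$ by time $\tau_0$. The probability is then bounded below by combining Lemma~\ref{prop.minore.proba.evt} (for the jump sequence) with an exponential bound (for the no-jump tail).

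For case~\ref{lemma.reach.T.Smin.Smax.s_inf_Smin} ($s\leq\Smin$), I would require the first $x-1$ jumps to be washouts, all occurring in $[0,\delta]$, and no further jump in $[\delta,\tau_0]$. On this event the population drops to $1$ by time $\delta$, and by Lemma~\ref{lemme.control.phi_moins_s}-\ref{lemme.control.phi_moins_s.maj} the substrate has moved by at most $\delta\max\{D\Sin,k\barmu L\}$ from $s$. Then the monotone-increasing flow with one individual carries the substrate up toward $\bar s_1>\Smin$, and at the first up-crossing of $\Smin$ the pair $(\ell,\Smin)$ satisfies $\Smin<\bar s_\ell$ (since an up-crossing requires being below the equilibrium), hence lies in $B(L,[\Smin,\Smax])$. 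The value of $\delta$ is precisely calibrated so that, using the triangle-type bound $\phit(1,\tilde s,\Smin)\leq\phit(1,s,\Smin)+\phit(1,\tilde s,s)$ combined with Lemma~\ref{lemme.control.phi_moins_1}, the remaining time $\tau_0-\delta$ suffices for the crossing. Lemma~\ref{prop.minore.proba.evt}.2 with $n=x-1$ yields probability $\geq P_d(x,x-1,\delta)\,e^{-(D+\barmu)(\tau_0-\delta)}$, and $P_d(x,x-1,\delta)\geq P_d(L,L-1,\delta)$ follows by the strong Markov property (any realization of $L-1$ consecutive deaths from $L$ passes through population $x$, after which $x-1$ further consecutive deaths are needed).

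Case~\ref{lemma.reach.T.Smin.Smax.s_sup_Smax} ($s\geq\Smax$) is entirely symmetric: $L-x$ births in $[0,\delta]$ bring the population to $L$, and then the monotone-decreasing flow crosses $\Smax$ from above at a state $(\ell,\Smax)$ with $\bar s_\ell<\Smax$, which is in $B$. Lemma~\ref{prop.minore.proba.evt}.3 provides the jump-sequence probability, with the factor $(\mu(\bar s_L)/\barmu)^{L-x}$ arising from lower-bounding the birth rates $\mu(\Psi(\cdot))$ by $\mu(\bar s_L)$ (valid since the substrate stays above $\bar s_L$ whenever the population stays at most $L$, by Lemma~\ref{lem:rapprochement}), while the tail $e^{-L(D+\barmu)(\tau_0-\delta)}$ reflects the jump rate at population $L$. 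Case~\ref{lemma.reach.T.Smin.Smax.s_in_Smin_Smax} ($s\in(\Smin,\Smax)$) couples the two previous strategies: since $B$ only consists of in-flowing boundary points, the trajectory must first exit the rectangle $\llbracket 1,L\rrbracket\times[\Smin,\Smax]$ and then re-enter. I would use $x-1$ washouts in $[0,\delta_1]$ (bringing pop to $1$ so the flow drives the substrate up past $\Smax$ toward $\bar s_1$) followed by $L-1$ births in $[\delta_1,\delta_1+\delta_2]$ (restoring pop to $L$) and no further jumps thereafter, so that the flow with $L$ individuals brings the substrate back down through $\Smax$, hitting $B$ at a state $(L,\Smax)$ with $\bar s_L<\Smax$. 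The two windows $\delta_1,\delta_2$ are calibrated analogously to the previous cases, one for the upward flow at pop~$1$ and one for the downward flow at pop~$L$; chaining Lemma~\ref{prop.minore.proba.evt}.2 with Lemma~\ref{prop.minore.proba.evt}.3 via the strong Markov property, together with $P_d(x,x-1,\delta_1)\geq P_d(L,L-1,\delta_1)$, yields the stated bound.

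The main technical obstacle is the calibration of the windows $\delta$, $\delta_1$, $\delta_2$: they must be small enough that the substrate is not perturbed too far from $s$ by the prescribed jumps (Lemma~\ref{lemme.control.phi_moins_s}) yet leave enough residual time $\tau_0-\delta$ (resp.\ $\tau_0-\delta_1-\delta_2$) for the deterministic flow to reach its target (Lemma~\ref{lemme.control.phi_moins_1}). The explicit formulas for $\delta$, $\delta_1$, $\delta_2$ given in the statement are precisely the solutions of the corresponding linear constraints obtained from these two bounds, and case~\ref{lemma.reach.T.Smin.Smax.s_in_Smin_Smax} is the most delicate since the trajectory must chain an upward segment at pop~$1$ with a downward segment at pop~$L$ while remaining within a single time budget.
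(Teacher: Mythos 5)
Your proposals for cases 1 and 2 follow exactly the paper's strategy: a prescribed sequence of washouts (resp.\ births) in a short initial window $[0,\delta]$, followed by no jumps until $\tau_0$, during which the monotone flow at population $1$ (resp.\ $L$) carries the substrate to the in-flowing boundary of $[\Smin,\Smax]$. Your calibration remarks and the monotonicity $P_d(x,x-1,\delta)\ge P_d(L,L-1,\delta)$ are also correct and match the paper.

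However, your strategy for case~3 does not produce the event it needs. You prescribe $x-1$ washouts in $[0,\delta_1]$ \emph{immediately} followed by $L-1$ births in $[\delta_1,\delta_1+\delta_2]$ and then no jumps. During these two short windows the substrate moves by at most $(\delta_1+\delta_2)\max\{D\Sin,k\barmu L\}$ from $s$, so at time $\delta_1+\delta_2$ the population is $L$ and the substrate is still in a neighborhood of $s\in(\Smin,\Smax)$, in particular \emph{below} $\Smax$. From there the flow at population $L$ is strictly decreasing toward $\bar s_L<\Smin$, so the trajectory exits through $(L,\Smin)$ -- which is \emph{not} in $B(L,[\Smin,\Smax])$ since $\bar s_L<\Smin$ -- and never reaches the state $(L,\Smax)$ that you claim it hits. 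The missing piece is the (random, and potentially long) waiting time for the flow at population $1$ to rise past $\Smax$ \emph{before} the births occur; you cannot place the births in a deterministic window $[\delta_1,\delta_1+\delta_2]$ because $\phit(1,s,\Smax)$ can be much larger than $\delta_1\le(\tau_0-t^\star)/2$. The paper handles this by splitting at the first hitting time $T_{L,[\Smax,\Smax]}$ of $\llbracket 1,L\rrbracket\times\{\Smax\}$: first apply item~1 (with $\Smin'=\Smax'=\Smax$) to bound $\PP_{(x,s)}\big(T_{L,[\Smax,\Smax]}\le\tau_0-\tfrac{\tau_0-\phit(1,s,\Smax)}{2}\big)$, then condition on the (random) population $i\in\llbracket 1,L\rrbracket$ at that hitting time and apply item~2 from $(i,\Smax)$ with the remaining time budget $\tfrac{\tau_0-\phit(1,s,\Smax)}{2}$, combining via the strong Markov property and a law of total probability over $i$. (The symmetric decomposition through $T_{L,[\Smin,\Smin]}$ covers the complementary sub-case $\tau_0>\phit(L,s,\Smin)$, which you do not address at all even though $\tau_0>t^\star$ only guarantees one of the two conditions.) This two-stage decomposition is precisely what lets the paper absorb the flow-travel time into the exponent while still tracking the correct population at the intermediate boundary crossing.
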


\medskip

\begin{proof}[Proof of Lemma~\ref{de.partout.a.presque.partout}]
Let $(y,r)\in K$ and let us define  
$\Smin:=\phis(y,r,\frac{\varepsilon}{3})$, $\Smax:=\phis(y,r,\frac{\varepsilon}{4})$ if $r\leq \bar s_y$ and 
$\Smax:= \phis(y,r,\frac{\varepsilon}{3})$, $\Smin:= \phis(y,r,\frac{\varepsilon}{4})$ if $r\geq \bar s_y$. 
Then $T_{\mathcal{E}_{y,r}^\varepsilon}=T_{\lmax,[\Smin,\Smax]}$.
From Lemma~\ref{lemme.control.phi_moins_s}-\ref{lemme.control.phi_moins_s.maj} and Remark~\ref{phis0.less.sin}, we have  $|r-\phis(y,r,\frac{\varepsilon}{4})|\leq \frac{\varepsilon}{4}\,\max\{D\,\Sin,k\,\barmu y\}$, then the condition 
\[
\varepsilon\leq\frac{4\,\min\{\bar s_1-\SmaxK,\, \SminK-\bar s_{\lmax}\}\,D\,(\tau_0-t_{\min})/2}{\max\{D\,\Sin,\, k\,\barmu\,\lmax\}\,(1+D\,(\tau_0-t_{\min})/2)}
\]
implies that, for $r\in [\SminK,\SmaxK]\subset [\bar s_\lmax,\bar s_1]$ and $y\in \setunlmax$,
\begin{align}
\label{encadre.eps4}
\texttt{s}
&\leq \phis\left(y,r,\frac{\varepsilon}{4}\right)
\leq \mathcal{S}
\end{align}
with $\texttt{s}:=\SminK-\frac{(\SminK-\bar s_{\lmax})\,D\,(\tau_0-t_{\min})/2}{1+D\,(\tau_0-t_{\min})/2}$ and
$\mathcal{S}:= \SmaxK+\frac{(\bar s_1-\SmaxK)\,D\,(\tau_0-t_{\min})/2}{1+D\,(\tau_0-t_{\min})/2}$.\\

In addition, as $\bar s_{\lmax}<\texttt{s}\leq \SminK\leq \SmaxK \leq \mathcal{S} < \bar s_1$,  from Corollary~\ref{cor:monotonyphit} and Lemma~\ref{lemme.control.phi_moins_1},
\begin{align}
\label{time.to.go.up}
\phit\left(1,\SminK,\mathcal{S}\right)
&= 
\phit\left(1,\SminK,\SmaxK\right)
+
\phit\left(1,\SmaxK,\mathcal{S}\right)\\ \nonumber
	&\leq
	t_{\min}
+
\frac{\mathcal{S}-\SmaxK}{D\,|\bar s_1-\mathcal{S}|}
= \tau_0
-
\frac{\tau_0-t_{\min}}{2}
\end{align}
and 
\begin{align}
\label{time.to.go.down}
\phit\left(\lmax,\SmaxK,\texttt{s}\right)
	&= 
\phit\left(\lmax,\SmaxK,\SminK\right)
+
\phit\left(\lmax,\SminK,\texttt{s}\right)\\ \nonumber
	&\leq
	t_{\min}
+
\frac{\SminK-\texttt{s}}{D\,|\texttt{s}-\bar s_\lmax|}
= \tau_0
-
\frac{\tau_0-t_{\min}}{2}\,.
\end{align}
Let set  $\delta_1:=\frac{\tau_0-t_{\min}}{2}\,\frac{D\,|\bar s_1-s|}{D\,|\bar s_1-s|+\max\{D\,\Sin,\,k\,\barmu\,\lmax\}}$ and $\delta_2:=\frac{\tau_0-t_{\min}}{2}\,\,\frac{D\,|\bar s_\lmax-s|}{D\,|\bar s_\lmax-s|+\max\{D\,\Sin,\,k\,\barmu\,\lmax\}}$.
From \eqref{encadre.eps4} $\Smin$ or $\Smax$ (or both) belongs to $[\texttt{s}, \mathcal{S}]$, hence for $(x,s)\in K$, we have three cases. 
\begin{enumerate}
\item If $s\leq \Smin$, then $\Smin \leq \mathcal{S}$, from Corollary~\ref{cor:monotonyphit} and from \eqref{time.to.go.up}
\[
\tau_0-\phit\left(1,s,\Smin\right)
\geq \tau_0 - \phit\left(1,\SminK,\mathcal{S}\right)
\geq \frac{\tau_0-t_{\min}}{2} >0 
\]
then from Lemma~\ref{lemma.reach.T.Smin.Smax}-\ref{lemma.reach.T.Smin.Smax.s_inf_Smin} and Remark~\ref{rk.Pd.Pb.inc},
\[
	\PP_{(x,s)}\big(T_{\mathcal{E}_{y,r}^\varepsilon}
	\leq \tau_0\big)\geq e^{-(D+\barmu)\,(\tau_0-\delta_1)}\,P_d(\lmax,\lmax-1,\delta_1)\,;
\]

\item If $s\geq \Smax$, then $\Smax \geq \texttt{s}$, from Corollary~\ref{cor:monotonyphit} and \eqref{time.to.go.down},
\[
\tau_0-\phit\left(\lmax,s,\Smax\right)
\geq \tau_0-\phit\left(\lmax,\SmaxK,\texttt{s}\right)
\geq\frac{\tau_0-t_{\min}}{2}>0
\]
then from Lemma~\ref{lemma.reach.T.Smin.Smax}-\ref{lemma.reach.T.Smin.Smax.s_sup_Smax} and Remark~\ref{rk.Pd.Pb.inc},
\[
	\PP_{(x,s)}\big(T_{\mathcal{E}_{y,r}^\varepsilon}
	\leq \tau_0\big)\geq e^{-(D+\barmu)\,(\tau_0-\delta_2)\,\lmax}\,
	\left(\frac{\mu(\bar s_{\lmax})}{\barmu}\right)^{\lmax-1}\,P_b(1,\lmax-1, \delta_2)\,;
\]

\item If $s\in (\Smin, \Smax)$, then $\Smin$ or $\Smax$ belongs to $[\texttt{s},\mathcal{S}]$, and at least one of both conditions $\tau_0-\phit\left(1,s,\Smin\right)
\geq \frac{\tau_0-t_{\min}}{2} >0 
$
or
$
\tau_0-\phit\left(\lmax,s,\Smax\right)
\geq\frac{\tau_0-t_{\min}}{2}>0
$ is satisfied. We then deduce from Lemma~\ref{lemma.reach.T.Smin.Smax}-\ref{lemma.reach.T.Smin.Smax.s_in_Smin_Smax} and Remark~\ref{rk.Pd.Pb.inc} that 
\begin{align*}
\PP_{(x,s)}\big(T_{\mathcal{E}_{y,r}^\varepsilon}\leq \tau_0\big)
	&\geq e^{-(D+\barmu)\,\left(\tau_0-\frac{\delta_1+\delta_2}{2}\right)\,\lmax}\,
		P_d\left(\lmax,\lmax-1, \frac{\delta_1}{2}\right)\,
\\
	&\qquad \times
	\left(\frac{\mu(\bar s_{\lmax})}{\barmu}\right)^{\lmax-1}\,P_b\left(1,\lmax-1, \frac{\delta_2}{2}\right)\,.
\end{align*}
\end{enumerate}
Finally Lemma~\ref{de.partout.a.presque.partout} holds with
\begin{align*}
C_1^{\tau_0}&:= 
	e^{-(D+\barmu)\,(\tau_0-\min\{\delta_1,\delta_2\})\,\lmax}\\
& \qquad
	\times P_d\left(\lmax,\lmax-1, \frac{\delta_1}{2}\right)\,
	\left(\frac{\mu(\bar s_{\lmax})}{\barmu}\right)^{\lmax-1}\,P_b\left(1,\lmax-1, \frac{\delta_2}{2}\right)\,.
	\qedhere
\end{align*}
\end{proof}

\medskip

\begin{proof}[Proof of Lemma~\ref{lemma.reach.T.Smin.Smax}]
Proof of Item~\ref{lemma.reach.T.Smin.Smax.s_inf_Smin}.  If $s \leq \Smin$,  we will prove that one way for the process to reach $B(L,[\Smin,\Smax])$ before $\tau_0$ is if the population jumps from $x$ to 1 by $x-1$ successive washout events during the time duration $\delta:=(\tau_0-\phit\left(1, s,\Smin\right))\,\frac{D\,|\bar s_1-s|}{D\,|\bar s_1-s|+\max\{D\,\Sin,\,k\,\barmu\,L\}}$ and if then no event occurs during the time duration $\tau_0-\delta$. The main arguments of the proof are the following: we will see that during the time duration $\delta$, the substrate concentration remains greater than or equal to $s-\delta\,\max\{D\,\Sin,\,k\,\barmu\,L\}$ and that $\delta$ is build such that $\phit\left(1, s-\delta\,\max\{D\,\Sin,\,k\,\barmu\,L\},\Smin\right)\leq \tau_0-\delta$ that is the remaining time after the successive washout events is enough for the substrate process to reach $\Smin$.

\begin{itemize}
\item if $x=1$ and $s_0 \in [s-\delta\,\max\{D\,\Sin,\,k\,\barmu\,L\}, \Smin]$, from Lemma~\ref{prop.minore.proba.evt} we have
\begin{align*}
\PP_{(1,s_0)}\left(T_{L,[\Smin,\Smax]}\leq \tau_0-\delta\right)
	& \geq
	\PP_{(1,s_0)}\left(\{T_{L,[\Smin,\Smax]}\leq  \tau_0-\delta)\}
						\cap \{T_1\ >  \tau_0-\delta\}\right)
\\
	&\geq
	e^{-(D+\barmu)\, (\tau_0-\delta)}\,
	\PP_{(1,s_0)}		
	\left(T_{L,[\Smin,\Smax]}\leq \tau_0-\delta \, | \, T_1>  \tau_0-\delta\right).
\end{align*}
Moreover, from Lemma~\ref{lemme.control.phi_moins_1}
\begin{align*}
\phit \Big(1,s-\delta\,\max\{D\,\Sin,\,k\,\barmu\,L\},s\Big)
	&\leq \frac{\delta\,\max\{D\,\Sin,\,k\,\barmu\,L\}}{D\,|\bar s_1-s|}
	=T
\end{align*}
with $T=(\tau_0-\phit\left(1, s,\Smin\right))\,\frac{\max\{D\,\Sin,\,k\,\barmu\,L\}}{D\,|\bar s_1-s|+\max\{D\,\Sin,\,k\,\barmu\,L\}}$.
Then from Corollary~\ref{cor:monotonyphit},
\begin{align*}
\phit\left(1, s_0,\Smin \right) &= \phit \Big(1,s-\delta\,\max\{D\,\Sin,\,k\,\barmu\,L\},\Smin\Big) 
\\ &\qquad- \phit \Big(1,s-\delta\,\max\{D\,\Sin,\,k\,\barmu\,L\},s_0\Big)
\\
	&\leq \phit \Big(1,s-\delta\,\max\{D\,\Sin,\,k\,\barmu\,L\},s\Big)+\phit \Big(1,s,\Smin\Big)
\\
	&\leq T+\phit \Big(1,s,\Smin\Big) = \tau_0 - \delta\,.
\end{align*}
Then, as from Lemma~\ref{lem:rapprochement} $t\mapsto \phi(1, s_0,t)$ is increasing,
\begin{align*}
\phi(1,s_0,\tau_0-\delta)
	&\geq \phi(1,s_0,\phit\left(1, s_0,\Smin \right))=\Smin\,.
\end{align*}
On the event $\{(X_0,S_0)=(1,s_0), \, T_1>  \tau_0-\delta \}$, we then have $S_{\tau_0-\delta} \geq \Smin$ a.s.
As $(S_t)_{t\geq 0}$ is a continuous process, from the intermediate value theorem, $(S_t)_{t\geq 0}$ reaches $\Smin$ in the time interval $[0,\tau_0-\delta]$. Moreover, as $\Smin<\bar s_1$ then  
\[
\PP_{(1,s_0)}		
	\left(T_{L,[\Smin,\Smax]}\leq \tau_0-\delta \, | \, T_1>  \tau_0-\delta\right)=1
\]
and therefore
\begin{align}
\label{eq.reach.Smin.Smax.befor.tmax.x1}
\PP_{(1,s_0)}\left(T_{L,[\Smin,\Smax]}\leq \tau_0-\delta \right)	
	&\geq
	e^{-(D+\barmu)\,(\tau_0-\delta)}\,.
\end{align}
As $\PP_{(1,s)}\left(T_{L,[\Smin,\Smax]}\leq \tau_0\right) \geq \PP_{(1,s)}\left(T_{L,[\Smin,\Smax]}\leq \tau_0-\delta\right)$, taking $s_0=s$ leads to the result.

\item if $x>1$, from Lemma~\ref{prop.minore.proba.evt},
\begin{align*}
\PP_{(x,s)}\left(T_{L,[\Smin,\Smax]}\leq \tau_0\right)
	&\geq
	\int_0^{\delta} \int_{u_1}^{\delta}\cdots \int_{u_{x-2}}^{\delta} 
		\left(\prod_{k=2}^{x}D\,k\right)\,
		e^{-(D+\barmu)\,\left(x\,u_1+\sum_{i=1}^{x-2}(x-i)\,(u_{i+1}-u_{i})\right)}
\\
	 &  \hspace{1.5cm}
	\PP_{(x,s)}
	\left(T_{L,[\Smin,\Smax]}\leq\tau_0 \ | \  
	\mathcal{E}_D(u_1,\dots, u_{x-1}) \right)\,
		\dif u_{x-1}\,\cdots \dif u_1\,.
\end{align*}

On the first hand, on the event $\mathcal{E}_D(u_1,\dots, u_{x-1})\cap \left\{(X_0,S_0)=(x,s)\right\}$,  with $u_{x-1} \leq \delta$, the substrate concentration at time $u_{x-1}$ verifies
 \[
 S_{u_{x-1}}=\Psi(x,s,(u_i,x-i)_{1\leq i \leq x-2},u_{x-1}))\geq  s-\delta\,\max\{D\,\Sin,\,k\,\barmu\,L\},
 \]
 where we recall that $\Psi$ was defined by \eqref{eq:defPsi}. Indeed, we more generally have that, for all $t \in [0,\delta]$,
\[
	S_t = s + \int_0^t (D\,(\Sin-S_u)-k\,\mu(S_u)\,X_u)\,\dif u
		\geq s-\delta\,k\,\barmu\,L\,.
\]	

On the second hand, at the end of the washout phase, either $S_{u_{x-1}}\geq \Smin$ and then $T_{L,[\Smin,\Smax]}< u_{x-1}\leq\tau_0$ or $S_{u_{x-1}}< \Smin$ and then $T_{L,[\Smin,\Smax]}\geq u_{x-1}$. Applying the Markov Property as well as \eqref{eq.reach.Smin.Smax.befor.tmax.x1} in the last case, we obtain
\begin{align*}
& \PP_{(x,s)}
	\left(T_{L,[\Smin,\Smax]}\leq\tau_0 \ | \ 	\mathcal{E}_D(u_1,\dots, u_{x-1}) \right)
\\
	&\qquad= \1_{\{\Psi(x,s,(u_i,x-i)_{1\leq i \leq x-2},u_{x-1}))\geq \Smin\}}
\\
	&\qquad \quad
			+ \1_{\{\Psi(x,s,(u_i,x-i)_{1\leq i \leq x-2},u_{x-1}))< \Smin\}}\,
\\
	&\qquad \quad
		\times \PP_{(1,\Psi(x,s,(u_i,x-i)_{1\leq i \leq x-2},u_{x-1}))}
	\left(T_{L,[\Smin,\Smax]}\leq\tau_0-u_{x-1}\right)
\\
& \qquad \geq e^{-(D+\barmu)\,(\tau_0-\delta)}
\end{align*}
and then
\begin{align}
\label{eq.reach.Smin.Smax.befor.tmax.xlarger1}
\PP_{(x,s)}\left(T_{L,[\Smin,\Smax]}\leq \tau_0\right)
	&\geq e^{-(D+\barmu)\,(\tau_0-\delta)}\,P_d(x,x-1,\delta)\,.
\end{align}
\end{itemize}

Proof of Item~\ref{lemma.reach.T.Smin.Smax.s_sup_Smax}. If $s \geq \Smax$, one way for the process to reach $B(L,[\Smin,\Smax])$ before $\tau_0$ is if the population jumps from $x$ to $L$ by $L-x$ successive division events during the time duration $\delta:=(\tau_0-\phit\left(L, s,\Smax\right))\,\frac{D\,|\bar s_L-s|}{D\,|\bar s_L-s|+\max\{D\,\Sin,\,k\,\barmu\,L\}}$ and if then no event occurs during the time duration $\tau_0-\delta$. 
We omit the details of the proof whose the sketch is exactly the same as for $s\leq \Smin$ and leads to
\begin{itemize}
\item if $x=L$, for all $s_0 \in [\Smax,s+\delta\,\max\{D\,\Sin,\,k\,\barmu\,L\}]$
\begin{align*}
\PP_{(L,s_0)}\left(T_{L,[\Smin,\Smax]}\leq \tau_0 \right)
	&\geq
	\PP_{(L,s_0)}\left(T_{L,[\Smin,\Smax]}\leq \tau_0-\delta \right)
	\geq
	e^{-(D+\barmu)\,(\tau_0-\delta)\,L}\,;
\end{align*}

\item if $x<L$, remarking that $\Psi(x,s,(u_i,x+i)_{1\leq i \leq k-1},u_k)\geq \bar s_L$ for all $1\leq k\leq L-x$ in the term below, as $\mu$ is increasing
\begin{align*}
\PP_{(x,s)}\left(T_{L, [\Smin,\Smax]}\leq \tau_0\right)
	&\geq e^{-(D+\barmu)\,(\tau_0-\delta)\,L}\,
\\
&   \quad \times
 	\int_0^{\delta} \int_{u_1}^{\delta}\dots \int_{u_{L-x-1}}^{\delta}
	e^{-(D+\barmu)\,\left(x\,u_1+
	\sum_{i=1}^{L-x-1}(x+i)\,(u_{i+1}-u_{i})\right)}\,
\\
	 &   \hspace{1cm} \times
	\left(\prod_{k=1}^{L-x}\mu\left(\Psi(x,s,(u_i,x+i)_{1\leq i \leq k-1},u_k)\right)\,(x+k-1)\right)
\,	
\\
	 &   \hspace{1cm} \times
	\dif u_{L-x}\dots\,\dif u_1
\\
	&\geq
	e^{-(D+\barmu)\,(\tau_0-\delta)\,L}\,\left(\frac{\mu(\bar s_L)}{\barmu}\right)^{L-x}\,
		P_b(x,L-x, \delta)\,.
\end{align*}
\end{itemize}

Proof of Item~\ref{lemma.reach.T.Smin.Smax.s_in_Smin_Smax}. If $s\in (\Smin,\Smax)$, in order that the process reaches $B(L,[\Smin,\Smax])$, it is necessary for the process $(S_t)_{t\geq 0}$ to exit $[\Smin,\Smax]$ and come back to this set.

If $\tau_0>\phit\left(1, s,\Smax\right)$, we will bound from below the probability that the process exits $(\Smin,\Smax)$ by the bound $\Smax$, at time $T_{L,[\Smax,\Smax]}$ (that is we also impose that the bacterial population is in $\llbracket 1, L\rrbracket$ at this exit time) before the time $\tau_0-\frac{\tau_0-\phit\left(1, s,\Smax\right)}{2}$ and then comes back to $[\Smin,\Smax]$ during the time interval $(T_{L,[\Smax,\Smax]},\tau_0]$.
We obtain
\begin{align}
\nonumber
&\PP_{(x,s)}\left(T_{L,[\Smin,\Smax]}\leq \tau_0\right)
\\
\nonumber
&\qquad \geq
\PP_{(x,s)}\left(\{T_{L,[\Smin,\Smax]}\leq\tau_0\}
	\cap \left\{T_{L,[\Smax,\Smax]}\leq \tau_0-\frac{\tau_0-\phit\left(1, s,\Smax\right)}{2}\right\}\right)
\\
\nonumber
& \qquad\geq
\PP_{(x,s)}\left(T_{L,[\Smax,\Smax]}\leq \tau_0-\frac{\tau_0-\phit\left(1, s,\Smax\right)}{2}\right)
\\
\label{lemma.reachS.proba.decompo}
& \qquad\qquad \times
\PP_{(x,s)}\left(T_{L,[\Smin,\Smax]}\leq\tau_0\ | \  T_{L,[\Smax,\Smax]}\leq \tau_0-\frac{\tau_0-\phit\left(1, s,\Smax\right)}{2} \right)\,.
\end{align}

On the one hand, as $\tau_0>\tau_0-\frac{\tau_0-\phit\left(1, s,\Smax\right)}{2}>\phit\left(1, s,\Smax\right)$, from Lemma~\ref{lemma.reach.T.Smin.Smax}-\ref{lemma.reach.T.Smin.Smax.s_inf_Smin} we have
\begin{align}
\nonumber
\PP_{(x,s)}&\left(T_{L,[\Smax,\Smax]}\leq\tau_0-\frac{\tau_0-\phit\left(1, s,\Smax\right)}{2}\right)
\\ \label{lemma.reachS.exit}
	&\geq e^{-(D+\barmu)\,\left(\tau_0-\frac{\tau_0-\phit\left(1, s,\Smax\right)}{2}-\delta_1\right)}\,
		P_d(x,x-1, \delta_1)
\end{align}
with $\delta_1:=\frac{\tau_0-\phit\left(1, s,\Smax\right)}{2}\,\frac{D\,|\bar s_1-s|}{D\,|\bar s_1-s|+\max\{D\,\Sin,\,k\,\barmu\,L\}}$.

\medskip

On the other hand, from the definition of $T_{L,[\Smax, \Smax]}$, $(X_{T_{L,[\Smax,\Smax]}},S_{T_{L,[\Smax,\Smax]}})\in \llbracket 1, L \rrbracket \times \{\Smax\}$, then by the law of total probability
\begin{align*}
&\PP_{(x,s)}\left(T_{L,[\Smin,\Smax]}\leq\tau_0\ | \  T_{L,[\Smax,\Smax]}\leq \tau_0-\frac{\tau_0-\phit\left(1, s,\Smax\right)}{2} \right)
\\
&\quad=
	\sum_{i=1}^L 
	\PP_{(x,s)}
	\left(T_{L,[\Smin,\Smax]}\leq\tau_0 \ | \ T_{L,[\Smax,\Smax]}\leq \tau_0-\frac{\tau_0-\phit\left(1, s,\Smax\right)}{2},\,
		X_{T_{L,[\Smax,\Smax]}}=i\right)
\\
	&\qquad\qquad  \times
	\PP_{(x,s)}
	\left(X_{T_{L,[\Smax,\Smax]}}=i \ | \  T_{L,[\Smax,\Smax]}\leq \tau_0-\frac{\tau_0-\phit\left(1, s,\Smax\right)}{2} \right).
\end{align*}
Set $A_i:=\left\{ T_{L,[\Smax,\Smax]}\leq \tau_0-\frac{\tau_0-\phit\left(1, s,\Smax\right)}{2},\,X_{T_{L,[\Smax,\Smax]}}=i\right\}$, Markov property entails now
\begin{align*}
&\PP_{(x,s)}
	\left(T_{L,[\Smin,\Smax]}\leq\tau_0 \ | \ A_i \right)
\\
&\quad\geq
	\PP_{(x,s)}
	\left(T_{L,[\Smin,\Smax]}\leq\tau_0 \ | \ A_i,  T_{L,[\Smax,\Smax]}\leq T_{L,[\Smin,\Smax]}\right)\,
	\PP_{(x,s)}\left(T_{L,[\Smax,\Smax]}\leq T_{L,[\Smin,\Smax]} \ | \ A_i \right)
\\
&\qquad +
	\PP_{(x,s)}
	\left(T_{L,[\Smin,\Smax]}\leq\tau_0 \ | \ A_i, T_{L,[\Smax,\Smax]}> T_{L,[\Smin,\Smax]} \right)\,
	\PP_{(x,s)}\left(T_{L,[\Smax,\Smax]}> T_{L,[\Smin,\Smax]} \ | \ A_i \right)
\\
&\quad\geq
	\PP_{(i,\Smax)}
	\left(T_{L,[\Smin,\Smax]}\leq \frac{\tau_0-\phit\left(1, s,\Smax\right)}{2}\right)\,
	\PP_{(x,s)}\left(T_{L,[\Smax,\Smax]}\leq T_{L,[\Smin,\Smax]} \ | \ A_i \right)
\\
&\qquad +
	1\times\,
	\PP_{(x,s)}\left(T_{L,[\Smax,\Smax]}> T_{L,[\Smin,\Smax]} \ | \ A_i \right)
\\
&\quad\geq
	\PP_{(i,\Smax)}
	\left(T_{L,[\Smin,\Smax]}\leq \frac{\tau_0-\phit\left(1, s,\Smax\right)}{2}\right).
\end{align*}
In addition, for all $i \in \llbracket 1, L \rrbracket$, from Lemma~\ref{lemma.reach.T.Smin.Smax}-\ref{lemma.reach.T.Smin.Smax.s_sup_Smax} applied to $\frac{\tau_0-\phit\left(1, s,\Smax\right)}{2}>0=\phit(L,\Smax,\Smax)$,  
\begin{multline*}
\PP_{(i,\Smax)}\left(T_{L,[\Smin,\Smax]}\leq \frac{\tau_0-\phit\left(1, s,\Smax\right)}{2}\right)\\
	\geq
	e^{-(D+\barmu)\,\left(\frac{\tau_0-\phit\left(1, s,\Smax\right)}{2}-\delta_2\right)\,L}\,
	\left(\frac{\mu(\bar s_L)}{\barmu}\right)^{L-1}\,P_b(1,L-1, \delta_2)
\end{multline*}
with $\delta_2:=\frac{\tau_0-\phit\left(1, s,\Smax\right)}{2}\,\frac{D\,|\bar s_L-s|}{D\,|\bar s_L-s|+\max\{D\,\Sin,\,k\,\barmu\,L\}}$. Therefore
\begin{multline}
\PP_{(x,s)}\left(T_{L,[\Smin,\Smax]}\leq\tau_0\ | \  T_{L,[\Smax,\Smax]}\leq \tau_0-\frac{\tau_0-\phit\left(1, s,\Smax\right)}{2} \right)
\\
\label{lemma.reachS.integrand}
\geq 
	e^{-(D+\barmu)\,\left(\frac{\tau_0-\phit\left(1, s,\Smax\right)}{2}-\delta_2\right)\,L}\,
	\left(\frac{\mu(\bar s_L)}{\barmu}\right)^{L-1}\,P_b(1,L-1, \delta_2)\,.
\end{multline}

Finally, from \eqref{lemma.reachS.proba.decompo}, \eqref{lemma.reachS.exit} and \eqref{lemma.reachS.integrand}
\begin{align*}
\PP_{(x,s)}\left(T_{L,[\Smin,\Smax]}\leq \tau_0\right)
& \geq
e^{-(D+\barmu)\,\left(\tau_0-\frac{\tau_0-\phit\left(1, s,\Smax\right)}{2}-\delta_1\right)}\,
		P_d(x,x-1, \delta_1)
\\
& \qquad \times
e^{-(D+\barmu)\,\left(\frac{\tau_0-\phit\left(1, s,\Smax\right)}{2}-\delta_2\right)\,L}\,
	\left(\frac{\mu(\bar s_L)}{\barmu}\right)^{L-1}\,P_b(1,L-1, \delta_2)
\\
& \geq
	e^{-(D+\barmu)\,\left(\tau_0-\delta_1-\delta_2\right)\,L}\,
		P_d(L,L-1, \delta_1)\,
	\left(\frac{\mu(\bar s_L)}{\barmu}\right)^{L-1}\,P_b(1,L-1, \delta_2)\,.
\end{align*}

\bigskip

 If $\tau_0>\phit\left(L, s,\Smin\right)$, we can bound from below the probability that the substrate process exits $(\Smin,\Smax)$ by the bound $\Smin$, at time $T_{L,[\Smin,\Smin]}$ before the time $\tau_0-\frac{\tau_0-\phit\left(L, s,\Smin\right)}{2}$ and then comes back to $[\Smin,\Smax]$ during the time intervalle $(T_{L,[\Smin,\Smin]},\tau_0]$.
In the same way as for $\tau_0>\phit\left(1, s,\Smax\right)$, we obtain
\begin{align*}
& \PP_{(x,s)}\left(T_{L,[\Smin,\Smax]}\leq \tau_0\right)\\
& \qquad \geq
\PP_{(x,s)}\left(T_{L,[\Smin,\Smin]}\leq \tau_0-\frac{\tau_0-\phit\left(L, s,\Smin\right)}{2}\right)
\\
& \qquad \quad \times
\PP_{(x,s)}\left(T_{L,[\Smin,\Smax]}\leq\tau_0 \ | \   T_{L,[\Smin,\Smin]}\leq \tau_0-\frac{\tau_0-\phit\left(L, s,\Smin\right)}{2} \right)
\\
&  \qquad \geq
	e^{-(D+\barmu)\,\left(\tau_0-\frac{\tau_0-\phit\left(L, s,\Smin\right)}{2}-\delta_2\right)\,L}\,
		\left(\frac{\mu(\bar s_L)}{\barmu}\right)^{L-x}\,P_b(x,L-x, \delta_2)
\\
& \qquad \quad 
	\times e^{-(D+\barmu)\,\left(\frac{\tau_0-\phit\left(L, s,\Smin\right)}{2}-\delta_1\right)}\,
	P_d(L,L-1, \delta_1)
\\
& \qquad \geq
	e^{-(D+\barmu)\,\left(\tau_0-\delta_1-\delta_2\right)\,L}\,
		P_d(L,L-1, \delta_1)\,
	\left(\frac{\mu(\bar s_L)}{\barmu}\right)^{L-1}\,P_b(1,L-1, \delta_2)
\end{align*}
with $\delta_1:=\frac{\tau_0-\phit\left(L, s,\Smin\right)}{2}\,\frac{D\,|\bar s_1-s|}{D\,|\bar s_1-s|+\max\{D\,\Sin,\,k\,\barmu\,L\}}$ and $\delta_2:=\frac{\tau_0-\phit\left(L, s,\Smin\right)}{2}\,\frac{D\,|\bar s_L-s|}{D\,|\bar s_L-s|+\max\{D\,\Sin,\,k\,\barmu\,L\}}$.
\end{proof}

\subsection{Proof of Lemma~\ref{lemma.stay.in.S}}
\label{sec:stay.in.S}
 
Assuming $0<\varepsilon\leq \frac{3\,\min\{\SminK-\bar s_{\lmax},\, \bar s_1-\SmaxK\}}{\max\{D\,\Sin,\, k\,\barmu\,\lmax\}}$ ensures that $[\phis(y,r,\frac{\varepsilon}{3}),\phis(y,r,\frac{\varepsilon}{4})]\subset [\bar s_{\lmax}, \bar s_1]$ from Lemma~\ref{lemme.control.phi_moins_s}-\ref{lemme.control.phi_moins_s.maj} and Remark~\ref{phis0.less.sin}. 
Moreover, remarking that,
\[
\phis\left(y,r , \frac{\varepsilon}{3}\right) 
	= \phis\left(y,\phis\left(y,r , \frac{\varepsilon}{4}\right)  , \frac{\varepsilon}{3}-\frac{\varepsilon}{4}\right)\,,
\]
from Lemma~\ref{lemme.control.phi_moins_s}-\ref{lemme.control.phi_moins_s.min} we have
\begin{align*}
\left|\phis\left(y, r, \frac{\varepsilon}{3}\right) - \phis\left(y, r, \frac{\varepsilon}{4}\right)\right| 
	& \geq 
	D\,\left|\phis\left(y,r, \frac{\varepsilon}{4}\right) - \bar s_y\right|\,\frac{\varepsilon}{12}
\\
	& = 
	D\,\left(\left|\phis\left(y,r, \frac{\varepsilon}{4}\right) - r\right| + |r- \bar s_y|\right)\,\frac{\varepsilon}{12}
\\
	& \geq
	D\,\left(D\,\frac{\varepsilon}{4}+1\right)|r- \bar s_y|\,\frac{\varepsilon}{12}\,.
\end{align*}
Lemma~\ref{lemma.stay.in.S} is then a consequence of Lemma~\ref{lemma.stay.in.S.general} below with $\beta= D\,(D\,\frac{\varepsilon}{4}+1)\,\delta\,\frac{\varepsilon}{12}$. Lemma~\ref{lemma.stay.in.S.general} states that the probability that the process stays in an interval can be bounded from below by a constant which only depends on the interval length.

\begin{lemma}
\label{lemma.stay.in.S.general}
Let $\beta>0$, $L\in\NN^*$ and $T>0$. Then there exists $C_{\ref{lemma.stay.in.S.general}}>0$ such that for all $\Smin$ and $\Smax$ such that $\bar s_L\leq \Smin < \Smax\leq \bar s_1$ and $\Smax-\Smin=\beta$, for all $(x,s) \in B(L,[\Smin,\Smax])$,
\[
\PP_{(x,s)}
		\left((X_t,S_t) \in \setunL\times [\Smin, \, \Smax] ,\,
				\forall t\in[0,T]\right)\geq C_{\ref{lemma.stay.in.S.general}}\,.
\]
\end{lemma}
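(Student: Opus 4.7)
The plan is to exhibit, for each starting point $(x,s)\in B(L,[\Smin,\Smax])$, a positive-probability trajectory of $(X_t,S_t)$ that stays in $\setunL\times[\Smin,\Smax]$ throughout $[0,T]$, with a lower bound depending only on $\beta,L,T$ and the fixed model constants (not on $\Smin$ or $\Smax$ individually). Since $|B(L,[\Smin,\Smax])|\leq 2L$, it suffices to treat each starting type separately and take the minimum. Introduce the threshold index $\ell^\star:=\max\{\ell\in\setunL:\bar s_\ell\geq\Smin\}$, which exists because $\bar s_1\geq\Smax\geq\Smin$; either $\ell^\star=L$ (and then $\bar s_L=\Smin$) or $\bar s_{\ell^\star+1}<\Smin$. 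I distinguish two sub-cases.

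\textbf{Case (a): $\bar s_{\ell^\star}\leq\Smax$.} Then $\bar s_{\ell^\star}\in[\Smin,\Smax]$, and by Lemma~\ref{lem:rapprochement} the flow $t\mapsto\phi(\ell^\star,s_0,t)$ from any $s_0\in[\Smin,\Smax]$ remains monotonically in $[\Smin,\Smax]$ for all $t\geq 0$. The prescribed trajectory brings $X_t$ from $x$ to $\ell^\star$ by at most $L-1$ consecutive jumps (divisions if $x<\ell^\star$, washouts if $x>\ell^\star$) inside a short initial window of length $\delta:=\min\{T/2,\beta/(4ML)\}$ with $M:=\max\{D\Sin,k\barmu L\}$, chosen so that $S_t$ moves by less than $\beta/4$ on $[0,\delta]$; then no further jump on $[\delta,T]$. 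The probability of this event is bounded below by iterating Lemma~\ref{prop.minore.proba.evt}, which gives a positive constant depending only on $\beta,L,T$ and the model constants.

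\textbf{Case (b): $\bar s_{\ell^\star}>\Smax$.} Then $\ell^\star<L$ and $\bar s_{\ell^\star+1}<\Smin$, so the flow with $\ell^\star$ individuals pushes $S_t$ upward past $\Smax$ while the flow with $\ell^\star+1$ pushes it downward below $\Smin$: no single value of $X$ keeps $S_t\in[\Smin,\Smax]$ forever, and I must force $X_t$ to oscillate between $\ell^\star$ and $\ell^\star+1$. Fix inner thresholds $s_\flat:=\Smin+\beta/4$ and $s_\sharp:=\Smin+3\beta/4$, and prescribe a trajectory that first, as in Case~(a), brings $X_t$ to $\ell^\star$ (if $s$ is near $\Smin$) or $\ell^\star+1$ (if $s$ is near $\Smax$) in a short window of length $\delta$ during which $S_t$ moves by less than $\beta/4$, and then alternates: a division at the deterministic time when $S_t$ first reaches $s_\sharp$ under the flow with $\ell^\star$, followed by a washout at the deterministic time when $S_t$ first reaches $s_\flat$ under the flow with $\ell^\star+1$, and so on. Between consecutive prescribed jumps the flow is monotone by Lemma~\ref{lem:rapprochement}, so $S_t\in[\Smin,s_\sharp]\cup[s_\flat,\Smax]\subset[\Smin,\Smax]$ throughout. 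By Lemma~\ref{lemme.control.phi_moins_1} the time between two successive prescribed jumps is bounded below by $\beta/(2M)$, so the total number of such jumps in $[0,T]$ is at most $n_{\max}:=\lceil 2MT/\beta\rceil+L$. The probability of such a prescribed trajectory is bounded below by iterating an analogue of Lemma~\ref{prop.minore.proba.evt} for alternating jump patterns, using that all rates $\mu(S_t)\ell$ and $D\ell$ along the path lie in uniform positive intervals (since $S_t\in[\Smin,\Smax]\subset(0,\bar s_1)$, $X_t\leq L$, and $\mu$ is continuous and strictly positive on $(0,\bar s_1]$). This yields the uniform constant $C_{\ref{lemma.stay.in.S.general}}>0$.

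The main obstacle is Case~(b): ensuring that the prescribed oscillating trajectory stays inside $[\Smin,\Smax]$ at \emph{every} intermediate time, not only at prescribed jump instants. This is handled by the $\beta/4$ safety margin separating the switching thresholds $s_\flat,s_\sharp$ from the boundaries $\Smin,\Smax$, combined with the monotonicity of the flow between jumps given by Lemma~\ref{lem:rapprochement}. A secondary technical point is that the iteration of Lemma~\ref{prop.minore.proba.evt} over an alternating pattern of bounded length $n_{\max}$ requires uniform two-sided bounds on the jump rates along the path, which is immediate here since $[\Smin,\Smax]\subset(0,\bar s_1)$ is compact and separated from $0$ and $\bar s_1$ by the fixed quantity $\beta$.
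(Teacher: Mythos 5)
Your proof proposal takes essentially the same route as the paper: you split according to whether the equilibrium $\bar s_{\ell^\star}$ lies in $[\Smin,\Smax]$; in that case you bring $X$ to $\ell^\star$ by a short initial phase and then forbid further jumps; otherwise you force $X$ to oscillate between $\ell^\star$ and $\ell^\star+1$, using inner thresholds at distance $\beta/4$ from the boundary so that the flow between jumps stays inside $[\Smin,\Smax]$, and you bound the number of required jumps by a quantity of order $MT/\beta$. All of this matches the paper's decomposition (where $s_1=\Smin+\beta/4$, $s_2=\Smin+3\beta/4$, $\gamma\geq 2|s_2-s_1|/M$, and the exponent $\lfloor T/\gamma\rfloor+1$ plays the role of your $n_{\max}$).

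One phrasing in Case~(b) does, however, conceal the genuinely technical part of the argument. You prescribe ``a division at the deterministic time when $S_t$ first reaches $s_\sharp$'', but the jump times have continuous laws, so the event that a jump occurs at an exact deterministic instant has probability zero. What is actually needed (and what the paper does, and what the structure of Lemma~\ref{prop.minore.proba.evt} provides) is to integrate over a positive-length \emph{window} for each jump: the division is required to occur at some time in the interval during which $S$ lies in $[s_2,\Smax)$ under the $\ell^\star$-flow, the subsequent washout in the interval during which $S$ lies in $(\Smin,s_1]$ under the $(\ell^\star{+}1)$-flow, and so on. One must then check that for \emph{every} admissible choice of jump times within these windows (not just the nominal hitting times) the trajectory stays in $[\Smin,\Smax]$, and that the window lengths and the jump rates are uniformly bounded below in terms of $\beta$, $L$, and the model constants so that the per-window probability has a uniform lower bound and the number of windows in $[0,T]$ is uniformly bounded above. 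Since you invoke Lemma~\ref{prop.minore.proba.evt}, you clearly have this tool in mind, but as written the key quantitative step is compressed into a phrase that, taken literally, would yield probability zero; filling this in requires exactly the recursion over oscillation periods that the paper carries out. A minor remark in your favour: you handle the boundary case $\ell^\star=L$, $\bar s_L=\Smin$ (which falls under Case~(a)) more explicitly than the paper does.
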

\begin{proof}
Let $\ell:= \max\{l \in \NN^* \text{ such that } \bar s_l \geq\Smin\}$,  and let $s_1$ and $s_2$ such that $\Smin<s_1<s_2<\Smax$. Note that, from Lemma~\ref{lem:defsbar}, $1\leq \ell \leq L-1$.
We aim to show that Inequalities~\eqref{minoration.sl.in.S} and \eqref{minoration.sl.notin.S} below  hold. Namely, if $\bar s_\ell \in [\Smin, \, \Smax]$, then
\begin{align}
\nonumber
& \PP_{(x,s)}
		\left((X_t,S_t) \in \setunL\times [\Smin, \, \Smax],\,
				\forall t\in[0,T]\right)
\\
\label{minoration.sl.in.S}
	&\qquad \geq
	e^{-(D+\barmu)\,L\,T}\, \min\left\{
	P_d(L,L-1, t_{\Smax-\Smin});\,
	\left(\frac{\mu(\bar s_L)}{\barmu}\right)^{L-1}\,
P_b(1,L-1, t_{\Smax-\Smin})
	\right\}\,,
\end{align}
with $t_{\Smax-\Smin}=|\Smax-\Smin|/\max\{D\,\Sin,\, k\,\barmu\,L\}$
and if $\bar s_\ell \notin [\Smin, \, \Smax]$, then
\begin{align}
\nonumber
& \PP_{(x,s)}
		\left((X_t,S_t) \in \setunL\times [\Smin, \, \Smax],\,
				\forall t\in[0,T]\right)
\\
\label{minoration.sl.notin.S}
	&\qquad \geq
	C^{\left\lfloor\frac{T}{\gamma}\right\rfloor+1}\, 
	\min\left\{P_d(L,L-1, t_1);\,
	\left(\frac{\mu(\bar s_L)}{\barmu}\right)^{L-1}\,P_b(1,L-1, t_2)\right\}
\end{align}
where the preceding constants are defined by
\begin{align*}
C &=
	  \frac{\mu(\bar s_L)\,D}{(D+\barmu)^2}\, e^{-(D+\barmu)\,\ell\,\phit(\ell,\Smin,s_2)} \,
	e^{-(D+\barmu)\,(\ell+1)\,\phit(\ell+1,\Smax,s_1)}\,
\\
& \quad
	\times \left[1-e^{-(D+\barmu)\,\ell\,\phit(\ell,s_2,\Smax)}\right]\,
	\left[1-
	e^{-(D+\barmu)\,(\ell+1)\,\phit(\ell+1,s_1,\Smin)}\right]\;
\end{align*}
and
\[
\gamma = \phit(\ell,s_1,s_2)+\phit(\ell+1,s_2,s_1)\,;
\]
\[
t_1 = \frac{|\Smax-s_2|}{\max\{D\,\Sin,\, k\,\barmu\,L\}}\,;\qquad
t_2 = \frac{|s_1-\Smin|}{\max\{D\,\Sin,\, k\,\barmu\,L\}}\,.
\]
Remarking that, if $\bar s_\ell \notin [\Smin, \, \Smax]$, then $|\bar s_\ell-s_2|\geq |\Smax-s_2|$ and $|s_1- \bar s_{\ell+1}|\geq |s_1-\Smin|$, we obtain from  Lemma~\ref{lemme.control.phi_moins_1}, remarking in addition that in this case $1\leq \ell \leq L-1$,
\[
C \geq \frac{\mu(\bar s_L)\,D}{(D+\barmu)^2}\, 
	e^{-\frac{D+\barmu}{D}\,L\,\left(\frac{s_2-\Smin}{\Smax-s_2}
	+\frac{\Smax-s_1}{s_1-\Smin}\right)} \,
	\left[1-e^{-\frac{(D+\barmu)\,(\Smax-s_2)}{\max\{D\,\Sin,\,k\,\barmu\,L\}}}\right]\,
	\left[1-e^{-\frac{(D+\barmu)\,(s_1-\Smin)}{\max\{D\,\Sin,\,k\,\barmu\,L\}}}\right]
\]
and
\[
\gamma \geq \frac{2\,|s_2-s_1|}{\max\{D\,\Sin, \, k\,\barmu\,L\}}\,.
\]
In particular, choosing $s_1 = \Smin+(\Smax-\Smin)/4$ and $s_2 = \Smin+3\,(\Smax-\Smin)/4$, Lemma~\ref{lemma.stay.in.S.general} holds with
\begin{align*}
C_{\ref{lemma.stay.in.S.general}} &= \min\Bigg\{
	\left(\frac{\mu(\bar s_L)\,D}{(D+\barmu)^2}\, 
	e^{-6\,\frac{D+\barmu}{D}\,L} \,
	\left[1-e^{-\frac{(D+\barmu)\,\beta}{4\,\max\{D\,\Sin,\,k\,\barmu\,L\}}}\right]^2\right)
	^{\frac{\max\{D\,\Sin, \, k\,\barmu\,L\}\,T}{\beta}+1}\,;\,
\\
	&\qquad\qquad\qquad
	e^{-(D+\barmu)\,L\,T} \Bigg\}
\\
	&\quad
	\times \min\left\{
	P_d(L,L-1, t_{\ref{lemma.stay.in.S.general}});\,
	\left(\frac{\mu(\bar s_L)}{\barmu}\right)^{L-1}\,
P_b(1,L-1, t_{\ref{lemma.stay.in.S.general}})
	\right\}
\\
	&>0
\end{align*}
with $t_{\ref{lemma.stay.in.S.general}} = \beta/(4\,\max\{D\,\Sin,\, k\,\barmu\,L\})$.

\medskip

So let us prove, first, that if $\bar s_\ell \in [\Smin, \, \Smax]$ then \eqref{minoration.sl.in.S} holds and, second, that if $\bar s_\ell \notin [\Smin, \, \Smax]$ then \eqref{minoration.sl.notin.S} holds.
 To prove \eqref{minoration.sl.notin.S}, we first show that $C^{\left\lfloor\frac{T}{\gamma}\right\rfloor+1}$ is a lower bound for $x=\ell$ and $s<s_1$ (including $(x,s)=(\ell,\Smin)$) and for $x=\ell+1$ and $s>s_2$ (including $(x,s)=(\ell+1,\Smax)$); we then deduce the result for $x \neq \ell$ and $s=\Smin$ and for $x\neq \ell+1$ and $s=\Smax$, with $(x,s)\in B(L,[\Smin,\Smax])$ reaching one of both previous cases by successive washout or division events; then leading to \eqref{minoration.sl.notin.S} for any possible initial condition in $B(L,[\Smin,\Smax])$.

\paragraph{If $\bar s_\ell \in [\Smin, \, \Smax]$:}
\begin{itemize}
\item If $x=\ell$: If no event occurs during $[0,T]$, then by Lemma~\ref{lem:rapprochement}, for all $s_0 \in [\Smin, \, \Smax]$, the process starting from $(\ell, s_0)$ stays in $\{\ell\}\times [s_0, \, \bar s_\ell]\subset \{\ell\}\times [\Smin, \, \Smax]$. Hence,
\begin{align}
\nonumber
\PP_{(\ell,s_0)}
		\left((X_t,S_t) \in \setunL\times [\Smin, \, \Smax], \, 
				\forall t\in[0,T]\right)
	&\geq \PP_{(\ell,s_0)}(T_1 \geq T)
\\
\nonumber
&\geq
e^{-(D+\barmu)\,\ell\,T}
\\
&\geq
e^{-(D+\barmu)\,L\,T}\,.
\label{eq.stayinS.noevent}
\end{align}

\item If $x > \ell$: From Lemma~\ref{prop.minore.proba.evt},
\begin{align*}
\nonumber
& \PP_{(x,s)}
	\left((X_t,S_t) \in\setunL \times [\Smin, \, \Smax], \, 
				\forall t\in[0,T]\right)
\\
& \qquad
	\geq \int_0^{t_{\Smax-\Smin}} \int_{u_1}^{t_{\Smax-\Smin}}\dots \int_{u_{x-\ell-1}}^{t_{\Smax-\Smin}}
	\left(\prod_{k=\ell+1}^{x}D\,k\right)\,
	e^{-(D+\barmu)\,\left(x\,u_1+
	\sum_{i=1}^{x-\ell-1}(x-i)\,(u_{i+1}-u_{i})\right)}\,	
\\
	 &  \hspace{1.5cm}
	\PP_{(x,s)}
	\Big((X_t,S_t) \in \setunL\times [\Smin, \, \Smax], \, 
				\forall t\in[0,T] \ \Big| \ \mathcal{E}_D(u_1,\dots,u_{x-\ell}) \Big)\,
\\
	 &  \hspace{1.5cm}
	\dif u_{x-\ell}\dots\,\dif u_2\,\dif u_1\,.
\end{align*}
As $(x,s)\in B(L,[\Smin,\Smax])$, we easily check from Lemma~\ref{lemme.control.phi_moins_1} that, on the event $\{(X_0,S_0)=(x,s)\}\cap\mathcal{E}_D(u_1,\dots,u_{x-\ell})$, the process $(X_t,S_t)_{0\leq t\leq u_{x-\ell}}$ stays in $\setunL \times [\Smin, \, \Smax]$ for $u_{x-\ell}\leq t_{\Smax-\Smin}$.
By the Markov property and \eqref{eq.stayinS.noevent} we then obtain, for $s_0=\Psi(x,s,(u_i,x-i)_{1\leq i \leq x-\ell-1},u_{x-\ell})\in [\Smin, \, \Smax]$:
\begin{align*}
&\PP_{(x,s)}
	\Big((X_t,S_t) \in \setunL\times [\Smin, \, \Smax], \, 
				\forall t\in[0,T] \ \Big| \ \mathcal{E}_D(u_1,\dots,u_{x-\ell}) \Big)
\\
&\qquad
	= \PP_{(\ell,s_0)}
	\Big((X_t,S_t) \in \setunL \times [\Smin, \, \Smax], \, 
				\forall t\in[0,(T-u_{x-\ell})\vee 0]  \Big)
\\
&\qquad \geq
	e^{-(D+\barmu)\,L\,T}\,,
\end{align*}
and therefore 
\begin{align*}
\PP_{(x,s)}
	\left((X_t,S_t) \in \setunL \times [\Smin, \, \Smax], \, 
				\forall t\in[0,T]\right)
& \geq
	e^{-(D+\barmu)\,L\,T}\, P_d(x,x-\ell, t_{\Smax-\Smin})
\\ & \geq
	e^{-(D+\barmu)\,L\,T}\, P_d(L,L-1, t_{\Smax-\Smin})\,.
\end{align*}

\item If $x < \ell$: in the same way, replacing the washouts event condition $\mathcal{E}_D(u_1,\dots,u_{x-\ell})$ by the divisions event condition $\mathcal{E}_B(u_1,\dots,u_{\ell-x})$ in the previous case, we obtain
\begin{align*}
\nonumber
& \PP_{(x,s)}
	\left((X_t,S_t) \in \setunL \times [\Smin, \, \Smax], \, 
				\forall t\in[0,T]\right)
\\
 & \qquad \geq
	e^{-(D+\barmu)\,L\,T}\, \left(\frac{\mu(\bar s_L)}{\barmu}\right)^{\ell-x}
	\,P_b(x,\ell-x, t_{\Smax-\Smin})
\\
&
	\qquad\geq
	e^{-(D+\barmu)\,L\,T}\, \left(\frac{\mu(\bar s_L)}{\barmu}\right)^{L-1}\,
P_b(1,L-1, t_{\Smax-\Smin})
\end{align*}
and then \eqref{minoration.sl.in.S} holds.
\end{itemize}

\bigskip

\paragraph{If $\bar s_\ell \notin [\Smin, \, \Smax]$:} By definition,  $\ell$ is such that $\bar s_\ell > \Smax$ and  $\bar s_{\ell+1} < \Smin$. Note that throughout this part of the proof, we will use the following properties (see Corollary~\ref{cor:monotonyphit}): for all $\Smin \leq r_0 \leq r_1 \leq r_2 \leq \Smax$,
\[
	\phit(\ell, r_0, r_1) \leq \phit(\ell, r_0, r_2)<+\infty, \qquad 
	\phit(\ell+1, r_1,r_0) \leq \phit(\ell+1,r_2,r_0)<+\infty\,.
\]
\begin{itemize}
\item If $x=\ell$ and $\Smin\leq s\leq s_1$: We prove that 
\begin{align}
\label{eq.p.fct.s1s2}
\PP_{(\ell,s)}
		\left((X_t,S_t) \in \setunL \times [\Smin, \, \Smax], \, 
				\forall t\in[0,T]\right) \geq C^{\left\lfloor\frac{T}{\gamma}\right\rfloor+1}\,.
\end{align}

One way for the substrate concentration process $(S_t)_{t\in[0,T]}$ to stay in $[\Smin, \, \Smax]$ is if the first event is a division and occurs at time $T_1 \in [\phit(\ell,s,s_2),\,  \phit(\ell,s,\Smax))$,  the second event is a washout and occurs at time $T_2 \in [T_1+\phit(\ell+1,S_{T_1},s_1),\,  T_1+\phit(\ell+1,S_{T_1},\Smin))$ and if the process $(S_t)_{T_2 \leq t \leq T\vee T_2}$ stays in $[\Smin, \, \Smax]$. In fact, we easily check that on this event
\[
(X_t,S_t) \in
\begin{cases}
	    \{\ell\}\times[s,\Smax) & \text{if } 0 \leq t < T_1\\
	    \{\ell+1\}\times[s_2,\Smax) & \text{if }  t= T_1\\
	 \{\ell+1\}\times(\Smin,\, \Smax) & \text{if } T_1 \leq t \leq T_2
\end{cases}\,.
\]

Therefore, from Lemma~\ref{prop.minore.proba.evt} and the Markov Property
\begin{align}
\nonumber
& \PP_{(\ell,s)}
		\left((X_t,S_t) \in \setunL \times [\Smin, \, \Smax], \, 
				\forall t\in[0,T]\right)
\\
\nonumber
& \quad \geq
	\PP_{(\ell,s)}
		\Big(\big\{X_{T_{1}}=\ell+1\big\}\cap \big\{\phit(\ell,s,s_2)\leq T_1\leq  \phit(\ell,s,\Smax)  \big\}
\\
\nonumber
& \quad \qquad \qquad
	\cap \big\{X_{T_{2}}=\ell\big\}\cap \big\{\phit(\ell+1,S_{T_1},s_1)\leq T_2-T_1\leq  \phit(\ell+1,S_{T_1},\Smin)\big\} 
\\
\nonumber
& \quad \qquad \qquad
	\cap \left\{(X_t,S_t) \in \setunL \times [\Smin, \, \Smax], \, 
				\forall t\in[0,T]\right\}\Big)
\\
\nonumber
&\quad \geq
	\mu(\bar s_L) \,\ell\,
	\int_{\phit(\ell,s,s_2)}^{\phit(\ell,s,\Smax)}
	e^{-(D+\barmu)\,\ell\,u_1} \, 
	D\,(\ell+1)\,
	\int_{\phit(\ell+1,\phi(\ell,s,u_1),s_1)}^{\phit(\ell+1,\phi(\ell,s,u_1),\Smin)}
	e^{-(D+\barmu)\,(\ell+1)\,u_2}
\\
 \nonumber
& \quad \qquad
	\PP_{(\ell,\phi(\ell+1,\phi(\ell,s,u_1),u_2))}
		\left((X_t,S_t) \in \setunL \times [\Smin, \, \Smax], \, 
				\forall t\in[0,(T-u_1-u_2)\vee 0]\right)
\\
\label{expression.k_plus_un}
& \quad \qquad
\,\dif u_2\,\dif u_1\,.
\end{align}

Assumption $s<s_1$ implies that $u_1\geq \phit(\ell,s_1,s_2)$, moreover $u_1\geq \phit(\ell,s,s_2)$ implies that $u_2 \geq \phit(\ell+1,s_2,s_1)$.
Hence $T-u_1-u_2 \leq T-\gamma$.
In addition, $u_2\in [\phit(\ell+1,\phi(\ell,s,u_1),s_1),\, \phit(\ell+1,\phi(\ell,s,u_1),\Smin)]$ implies that
$\phi(\ell+1,\phi(\ell,s,u_1),u_2)\in[\Smin,\,s_1]$.
Then, in order to obtain \eqref{eq.p.fct.s1s2}, by recurrence, it is suffisant to prove that
\begin{multline}
\mu(\bar s_L) \,\ell\,
	\int_{\phit(\ell,s,s_2)}^{\phit(\ell,s,\Smax)}
	e^{-(D+\barmu)\,\ell\,u_1} \, 
	D\,(\ell+1)\,\\
	\times
	\int_{\phit(\ell+1,\phi(\ell,s,u_1),s_1)}^{\phit(\ell+1,\phi(\ell,s,u_1),\Smin)}
	e^{-(D+\barmu)\,(\ell+1)\,u_2}\,\dif u_2\,\dif u_1
 \geq C\,.
\label{proba.minoree.C}
\end{multline}

Remarking that, from Corollary~\ref{cor:monotonyphit}, we have
\[
\phit(\ell+1,\phi(\ell,s,u_1),\Smin)-\phit(\ell+1,\phi(\ell,s,u_1),s_1)=\phit(\ell+1,s_1,\Smin)
\]
we then obtain
\begin{align*}
\nonumber
& D\,(\ell+1)\,
	\int_{\phit(\ell+1,\phi(\ell,s,u_1),s_1)}^{\phit(\ell+1,\phi(\ell,s,u_1),\Smin)}
	e^{-(D+\barmu)\,(\ell+1)\,u_2}\,\dif u_2
\\
\nonumber
& =
	\frac{D}{D+\barmu}\,e^{-(D+\barmu)\,(\ell+1)\,\phit(\ell+1,\phi(\ell,s,u_1),s_1)}\,
	\left(1-e^{-(D+\barmu)\,(\ell+1)\,\phit(\ell+1,s_1,\Smin)}\right)
\\
& \geq
	\frac{D}{D+\barmu}\,e^{-(D+\barmu)\,(\ell+1)\,\phit(\ell+1,\Smax,s_1)}\,
	\left(1-e^{-(D+\barmu)\,(\ell+1)\,\phit(\ell+1,s_1,\Smin)}\right)\,.
\end{align*}
In the same way,
\begin{align*}
\nonumber
 &\mu(\bar s_L) \,\ell\,
	\int_{\phit(\ell,s,s_2)}^{\phit(\ell,s,\Smax)}
	e^{-(D+\barmu)\,\ell\,u_1}\,\dif u_1
\\
\nonumber
&\qquad =
	\frac{\mu(\bar s_L)}{D+\barmu}\,e^{-(D+\barmu)\,\ell\,\phit(\ell,s,s_2)}\,
	\left(1-e^{-(D+\barmu)\,\ell\,\phit(\ell,s_2,\Smax)}\right)
\\
&\qquad \geq
	\frac{\mu(\bar s_L)}{D+\barmu}\,e^{-(D+\barmu)\,\ell\,\phit(\ell,\Smin,s_2)}\,
	\left(1-e^{-(D+\barmu)\,\ell\,\phit(\ell,s_2,\Smax)}\right)\,.
\end{align*}
Hence \eqref{proba.minoree.C} holds.

\item If $x=\ell+1$ and $s_2\leq s\leq \Smax$:
Replacing both steps:
\begin{enumerate}
\item the first event is a division and occurs at time $T_1 \in [\phit(\ell,s,s_2)),\,  \phit(\ell,s,\Smax))$
\item the second event is a washout and occurs at time $T_2 \in [T_1+\phit(\ell+1,S_{T_1},s_1)),\,   T_1+\phit(\ell+1,S_{T_1},\Smin))$
\end{enumerate}
in the proof for $x=\ell$ and $s\leq s_1$ by
\begin{enumerate}
\item the first event is a washout and occurs at time $T_1 \in [\phit(\ell+1,s,s_1)),\,  \phit(\ell+1,s,\Smin))$
\item the second event is a division and occurs at time $T_2 \in [T_1+\phit(\ell,S_{T_1},s_2)),\,  T_1+\phit(\ell,S_{T_1},\Smax))$
\end{enumerate}
gives the same lower bound starting from $x=\ell+1$ and $s\geq s_2$:
\begin{align}
\label{eq.p.fct.s1s2bis}
\PP_{(\ell+1,s)}
		\left((X_t,S_t) \in \setunL \times [\Smin, \, \Smax], \, 
				\forall t\in[0,T]\right) \geq C^{\left\lfloor\frac{T}{\gamma}\right\rfloor+1}\,.
\end{align}

\item If $x\neq \ell+1$ and $s=\Smax$: As $(x,s)\in B(L,[\Smin,\Smax])$, therefore, $x>\ell+1$. Let $t_1 = |\Smax-s_2|/\max\{D\,\Sin,\, k\,\barmu\,L\}$, by Lemma~\ref{prop.minore.proba.evt}, 
\begin{align*}
\nonumber
& \PP_{(x,s)}
	\left((X_t,S_t) \in \setunL \times [\Smin, \, \Smax], \, 
				\forall t\in[0,T]\right)
\\
& \qquad
	\geq \int_0^{t_1} \int_{u_1}^{t_1}\dots \int_{u_{x-\ell-2}}^{t_1}
	\left(\prod_{k=\ell+2}^{x}D\,k\right)\,
	e^{-(D+\barmu)\,\left(x\,u_1+
	\sum_{i=1}^{x-\ell-1}(x-i)\,(u_{i+1}-u_{i})\right)}\,	
\\
	 &  \hspace{1.5cm}
	\PP_{(x,s)}
	\Big((X_t,S_t) \in \setunL\times [\Smin, \, \Smax], \, 
				\forall t\in[0,T]  \ \Big| \ \mathcal{E}_D(u_1,\dots,u_{x-\ell-1})\Big)\,
\\
	 &  \hspace{1.5cm}
	\dif u_{x-\ell-1}\dots\,\dif u_2\,\dif u_1\,.
\end{align*}
As $\bar s_{x-i}\leq \bar s_{\ell+1}<s_2$ for all $i\in \llbracket 1,x-\ell-1 \rrbracket$, we easily check from Lemma~\ref{lemme.control.phi_moins_1} that, on the event $\{(X_0,S_0)=(x,s)\}\cap\mathcal{E}_D(u_1,\dots,u_{x-\ell-1})$, the process $(X_t,S_t)_{0\leq t\leq t_1}$ stays in $\setunL \times [s_2,\Smax]$.
By the Markov property and \eqref{eq.p.fct.s1s2bis} we then obtain, for $s_0=\Psi(x,s,(u_i,x-i)_{1\leq i \leq x-\ell-2},u_{x-\ell-1})\in [s_2,\Smax]$ with $u_{x-\ell-1}\leq t_1$:
\begin{align*}
&\PP_{(x,s)}
	\Big((X_t,S_t) \in \setunL \times [\Smin, \, \Smax], \, 
				\forall t\in[0,T]   \ \Big| \ \mathcal{E}_D(u_1,\dots,u_{x-\ell-1})\Big)
\\
&\qquad
	= \PP_{(\ell+1,s_0)}
	\Big((X_t,S_t) \in \setunL \times [\Smin, \, \Smax], \, 
				\forall t\in[0,(T-u_{x-\ell-1})\vee 0]  \Big)
\\
&\qquad \geq
	C^{\left\lfloor\frac{T}{\gamma}\right\rfloor+1}\,,
\end{align*}
and therefore 
\begin{align*}
\nonumber
& \PP_{(x,s)}
	\left((X_t,S_t) \in \setunL \times [\Smin, \, \Smax], \, 
				\forall t\in[0,T]\right)
\\
 & \qquad \geq
	C^{\left\lfloor\frac{T}{\gamma}\right\rfloor+1}
	\,P_d(x,x-\ell-1, t_1)
\\
&
	\qquad\geq
	C^{\left\lfloor\frac{T}{\gamma}\right\rfloor+1}\, 
P_d(L,L-1, t_1)\,.
\end{align*}

\item If $x\neq \ell$ and $s=\Smin$: Remarking that $(x,s)\in B(L,[\Smin,\Smax])$ implies $x < \ell$, in the same way as the previous case and using \eqref{eq.p.fct.s1s2}, we obtain
\begin{align*}
\nonumber
& \PP_{(x,s)}
	\left((X_t,S_t) \in \setunL \times [\Smin, \, \Smax], \, 
				\forall t\in[0,T]\right)
\\
 & \qquad \geq
	C^{\left\lfloor\frac{T}{\gamma}\right\rfloor+1}\, \left(\frac{\mu(\bar s_L)}{\barmu}\right)^{\ell-x}
	\,P_b(x,\ell-x, t_2)
\\
&
	\qquad\geq
	C^{\left\lfloor\frac{T}{\gamma}\right\rfloor+1}\, \left(\frac{\mu(\bar s_L)}{\barmu}\right)^{L-1}\,
P_b(1,L-1, t_2)
\end{align*}
with $t_2 = |s_1-\Smin|/\max\{D\,\Sin,\, k\,\barmu\,L\}$. \qedhere
\end{itemize}
\end{proof}

\subsection{Proof of Lemma~\ref{lemma.Tys.depuis.S}}
\label{sec:T.depuis.S}

Lemma~\ref{lemma.Tys.depuis.S} is a corollary of the following lemma with $\delta_1=\frac{\varepsilon}{4}$ and $\delta_2=\frac{\varepsilon}{3}$.

\begin{lemma}
\label{reach.y_s_general}
Let $L\in \NN^*$, $\varepsilon>0$ and let $\delta_1, \delta_2$ such that $\varepsilon/2>\delta_2>\delta_1>0$. Then for all $(y,r)\in \setunL \times \ens{\bar s_L}{\bar s_1} \backslash \{(\ell,\bar s_\ell),\, \ell\in \setunL\}$ such that $0<\delta_1\leq \frac{\min\{r-\bar s_{L},\, \bar s_1-r\}}{\max\{D\,\Sin,\, k\,\barmu\,L\}}$ and for all $(x,s) \in \setunL \times \ens{\phis(y,r,\delta_2)}{\phis(y,r,\delta_1)}$
\[
\PP_{(x,s)}
	\Big(T_{y,r} \leq \varepsilon \Big)\geq  
	C^{\varepsilon, \delta_1, \delta_2}_{|\bar s_y -r|}
\]
with
\begin{align*}
 C^{\varepsilon, \delta_1, \delta_2}_{|\bar s_y -r|}
	&= e^{-(D+\barmu)\,L\,\frac{\varepsilon}{2}}
	\times
	\min\left\{P_d(L,L-1, t^\star) \, ; \, 
	\left(\frac{\mu(\bar s_L)}{\barmu}\right)^{L-1}\,P_b(1,L-1, t^\star)\right\}
\end{align*}
where
$
t^\star = 
	\frac{\min \left\{D\,|\bar s_y-r|\,\delta_1 \, , \, 
		D\,(D\,\delta_2+1)\,|\bar s_y-r|\,(\varepsilon/2-\delta_2)\right\}}
		{\max\{D\,\Sin,\,k\,\barmu\,L\}}
$.
\end{lemma}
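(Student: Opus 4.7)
The plan is to build, starting from $(x,s)$, a favorable scenario of positive probability on which $(X_t,S_t)=(y,r)$ for some $t\le\varepsilon$. The trajectory is split into two phases: a short \emph{jumping phase} of duration at most $t^\star$ during which the bacterial population transitions from $x$ to $y$, followed by a \emph{drift phase} of length at most $\varepsilon/2$ during which no jump occurs and the flow $\phi(y,\cdot,\cdot)$ carries the substrate from its current value to $r$. The probability of the jumping phase is controlled by Lemma~\ref{prop.minore.proba.evt}: if $x>y$ we force the first $x-y$ events to be washouts occurring in $[0,t^\star]$, bounded below by $P_d(x,x-y,t^\star)\ge P_d(L,L-1,t^\star)$; if $x<y$ we force $y-x$ successive divisions, bounded below by $(\mu(\bar s_L)/\barmu)^{y-x}P_b(x,y-x,t^\star)\ge (\mu(\bar s_L)/\barmu)^{L-1}P_b(1,L-1,t^\star)$; if $x=y$ the phase is trivial.

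During the jumping phase, the substrate satisfies $|S_u-s|\le u\max\{D\,\Sin,k\,\barmu\,L\}$ by the ODE~\eqref{eq:substrate} since the population is bounded by $L$. Hence at the end of Phase~1 the state is $(y,s')$ with $|s'-s|\le t^\star\max\{D\,\Sin,k\,\barmu\,L\}$. For the drift phase to succeed, we need $r$ to be reachable from $s'$ with $y$ individuals in time at most $\varepsilon-t^\star$, i.e., $s'\in\ens{\phis(y,r,\varepsilon-t^\star)}{r}$. By hypothesis $s\in\ens{\phis(y,r,\delta_2)}{\phis(y,r,\delta_1)}$, so $\phit(y,s,r)\in[\delta_1,\delta_2]$ by monotonicity of the flow (Lemmas~\ref{prop.monotony.flow} and \ref{lem:rapprochement}); the question is whether the perturbation $s\mapsto s'$ keeps us inside the larger window. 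Using Lemma~\ref{lemme.control.phi_moins_1}, a substrate shift of size $t^\star\max\{D\,\Sin,k\,\barmu\,L\}$ corresponds to a shift in flow time of order $t^\star\max\{D\,\Sin,k\,\barmu\,L\}/(D|\bar s_y-r|)$, and the specific definition of $t^\star$ is chosen precisely so that this increment, added to $\delta_2$, still fits inside $\varepsilon-t^\star$. The two terms in the $\min$ defining $t^\star$ correspond respectively to keeping $s'$ inside the original window $\ens{\phis(y,r,\delta_2)}{\phis(y,r,\delta_1)}$ (this uses Lemma~\ref{lemme.control.phi_moins_s}.\ref{lemme.control.phi_moins_s.min} to quantify the width of this window, yielding the factor $(D\,\delta_2+1)$) and to the requirement that Phase~1 itself be short enough that some room remains for Phase~2.

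Once Phase~1 ends with $s'$ in the good window, by continuity of the flow and the intermediate value theorem there exists a deterministic time $t_2\in[0,\varepsilon/2]$ such that $\phi(y,s',t_2)=r$. The probability that no jump occurs during $[t^\star,t^\star+t_2]$ with $y\le L$ individuals and substrate bounded above by $\bar s_1$ (hence jump rates bounded by $D+\barmu$ per capita) is at least $e^{-(D+\barmu)L\,t_2}\ge e^{-(D+\barmu)L\,\varepsilon/2}$, by the same argument as in Lemma~\ref{prop.minore.proba.evt}(1). Concatenating the two phases via the strong Markov property and taking the minimum over the two cases ($x\ge y$ versus $x\le y$) yields exactly the constant $C^{\varepsilon,\delta_1,\delta_2}_{|\bar s_y-r|}$.

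The main obstacle is the quantitative verification of Phase~2 feasibility: one needs to chain Lemma~\ref{lemme.control.phi_moins_s}.\ref{lemme.control.phi_moins_s.min}, Lemma~\ref{lemme.control.phi_moins_1}, and Corollary~\ref{cor:monotonyphit} to translate the sub-additive substrate perturbation of Phase~1 into a controlled increment of $\phit(y,\cdot,r)$, and to check that the $\min$ defining $t^\star$ actually captures both worst cases (perturbation that moves $s'$ away from $r$, and perturbation that moves $s'$ past $r$). The remaining steps are routine applications of the technology developed in the Appendix.
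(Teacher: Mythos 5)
Your proposal follows essentially the same two-phase decomposition as the paper's proof (a jumping phase of duration at most $t^\star$ controlled by Lemma~\ref{prop.minore.proba.evt}, then a jump-free drift phase of duration at most $\varepsilon/2$ bounded below by $e^{-(D+\barmu)L\varepsilon/2}$), using the same lemmas and arriving at the same constant. The only slight misstatement is attributing the factor $(D\delta_2+1)$ to the width of the starting window $\ens{\phis(y,r,\delta_2)}{\phis(y,r,\delta_1)}$: it in fact arises from Lemma~\ref{lemme.control.phi_moins_s}.\ref{lemme.control.phi_moins_s.min} as a lower bound on the slack $|\phis(y,r,\varepsilon/2)-\phis(y,r,\delta_2)|\ge D(D\delta_2+1)|r-\bar s_y|(\varepsilon/2-\delta_2)$ between the starting window and the larger target window $\ens{\phis(y,r,\varepsilon/2)}{r}$, which together with the other $\min$-term (bounding $|r-\phis(y,r,\delta_1)|$) is exactly the pair of worst-case slacks you correctly identify at the end.
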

\begin{proof}
The aim is to prove that, with positive probability, the process goes from $(x,s)$ to $\{y\}\times \ens{\phis(y,r,\varepsilon/2)}{r}$, in a time less than $\varepsilon/2$; and 
then starting from an initial condition in $\{y\}\times \ens{\phis(y,r,\varepsilon/2)}{r}$ it reaches $(y,r)$ in a time less than $\varepsilon/2$.
We then have three cases.

1. If $x=y$ then by definition of $\phis(y,r,.)$, for all $s_0 \in \ens{\phis(y,r, \varepsilon/2)}{r}$ if there is no jump during the time interval $[0,\varepsilon/2]$, then the process starting from $(y,s_0)$ reaches $(y,r)$ before the time $\varepsilon/2$,
then, from Lemma~\ref{prop.minore.proba.evt},
\begin{align}
\PP_{(y,s_0)}
	\Big(T_{y,r} \leq \varepsilon/2 \Big)
&\geq 
	\PP_{(y,s_0)}
	\Big(T_1>\varepsilon/2\Big)
\geq
\label{eq.reach.Tys.fromSandy}
	e^{-(D+\barmu)\,L\frac{\varepsilon}{2}}\,.
\end{align}
As
$\PP_{(y,s)}
	\Big(T_{y,r} \leq \varepsilon \Big) \geq \PP_{(y,s)}
	\Big(T_{y,r} \leq \varepsilon/2 \Big)$ and $s \in \ens{\phis(y,r, \varepsilon/2)}{r}$, then the result holds.

\medskip

2. If $x<y$ then from Lemma~\ref{prop.minore.proba.evt},
\begin{align}
\nonumber
& \PP_{(x,s)}
	\Big(T_{y,r} \leq \varepsilon\Big)
\\\nonumber
& \qquad
	\geq \int_0^{t^\star} \int_{u_1}^{t^\star}\dots \int_{u_{y-x-1}}^{t^\star}
	\left(\prod_{k=y+1}^{x}D\,k\right)\,
	e^{-(D+\barmu)\,\left(x\,u_1+
	\sum_{i=1}^{x-y-1}(x-i)\,(u_{i+1}-u_{i})\right)}\,	
\\
	 &  \qquad\qquad\qquad
	\PP_{(x,s)}
	\Big(T_{y,r} \leq \varepsilon \Big| \ \mathcal{E}_D(u_1,\dots,u_{x-y}) \Big)\,
	\dif u_{x-y}\dots\,\dif u_2\,\dif u_1\,.
 \label{eq.proba.atteinte.depuis.S}
\end{align}
In order to obtain the result, it is suffisant to prove that for all $u_1<\dots <u_{x-y}<t^\star$,
\begin{align}
\label{eq.sub.in.good.domain}
	\Psi(x,s,(u_i,x-i)_{1\leq i \leq x-y-1},u_{x-y}))\in \ens{\phis(y,r, \varepsilon/2)}{r}\,.
\end{align}
Indeed we easily check, using \eqref{eq.bound.travel.dist1} below and Lemma~\ref{lemme.control.phi_moins_s}, that $t^\star\leq \delta_1<\varepsilon/2$. By the Markov property and \eqref{eq.reach.Tys.fromSandy} we then obtain
\begin{align*}
&\PP_{(x,s)}
	\Big(T_{y,r} \leq \varepsilon \Big| \ \mathcal{E}_D(u_1,\dots,u_{x-y}) \Big)
\\
&\qquad\qquad\qquad =
\PP_{(x,s)}
	\Big( u_{x-y} \leq T_{y,r} \leq \varepsilon \Big| \ \mathcal{E}_D(u_1,\dots,u_{x-y}) \Big)
\\
&\qquad\qquad\qquad =
\PP_{(y,\Psi(x,s,(u_i,x-i)_{1\leq i \leq x-y-1},u_{x-y}))}
	\Big(T_{y,r} \leq \varepsilon-u_{x-y} \Big)
	\\
&\qquad\qquad\qquad \geq
\PP_{(y,\Psi(x,s,(u_i,x-i)_{1\leq i \leq x-y-1},u_{x-y}))}
	\Big(T_{y,r} \leq \varepsilon/2 \Big)
\\
&\qquad\qquad\qquad \geq
	e^{-(D+\barmu)\,L\frac{\varepsilon}{2}}\,.
\end{align*}
and
\begin{align*}
\PP_{(x,s)}
	\Big(T_{y,r} \leq \varepsilon\Big)
	&\geq
	e^{-(D+\barmu)\,L\frac{\varepsilon}{2}}\, P_d(x,x-y, t^\star)
	\geq
	e^{-(D+\barmu)\,L\,\frac{\varepsilon}{2}}\, P_d(L,L-1, t^\star)\,.
\end{align*}

Let us prove that \eqref{eq.sub.in.good.domain} holds. More generally, we will prove that for all $n\in \NN$, for all $u_1<\dots< u_{n+1}<t^\star$, for all $(x_i)_{1\leq i \leq n}$ with value in $\setunL$
\begin{align}
\label{eq.sub.in.good.domain.general}
\Psi(x,s,(u_i,x_i)_{1\leq i \leq n},u_{n+1})\in \ens{\phis(y,r, \varepsilon/2)}{r}\,.
\end{align}
By \eqref{eq:defPsi} and \eqref{eq:flow}
\begin{equation}
\label{eq.bound.travel.dist}
\left|\Psi(x,s,(u_i,x_i)_{1\leq i \leq n},u_{n+1})-s\right| \leq t^\star\,\max\{D\,\Sin,\, k\,\barmu\,L\}\,.
\end{equation}

First $\delta_1\leq \frac{\min\{r-\bar s_L,\, \bar s_1-r\}}{\max\{D\,\Sin,\, k\,\barmu\,L\}}$ ensures, from Lemma~\ref{lemme.control.phi_moins_s}-\ref{lemme.control.phi_moins_s.maj} and Remark~\ref{phis0.less.sin} that $\bar s_L\leq\phis(y,r,\delta_1)\leq\bar s_1$, then from Lemma~\ref{lemme.control.phi_moins_s}-\ref{lemme.control.phi_moins_s.min},
\begin{equation}
\label{eq.bound.travel.dist1}
	|r-\phis(y,r,\delta_1)| \geq D\, |\bar s_y-r|\delta_1,
\end{equation}
Second,
\begin{itemize}
\item if $\phis(y,r , \varepsilon/2)>0$, as $\phis$  inherits a flow property from $\phi$, we have $\phis(y,r , \varepsilon/2) = \phis(y,\phis(y,r , \delta_2)  , \varepsilon/2-\delta_2)$. 
Then from Lemma~\ref{lemme.control.phi_moins_s}-\ref{lemme.control.phi_moins_s.min}
\begin{align*}
|\phis(y, r, \varepsilon/2) - \phis(y,r, \delta_2)| 
	& \geq 
	D\,|\phis(y, r, \delta_2) - \bar s_y|\,\left(\frac{\varepsilon}{2}-\delta_2\right)
\\
	& = 
	D\,\left(|\phis(y,r, \delta_2) - r| + |r- \bar s_y|\right)\,\left(\frac{\varepsilon}{2}-\delta_2\right)
\\
	& \geq
	D\,(D\,\delta_2+1)|r- \bar s_y|\,\left(\frac{\varepsilon}{2}-\delta_2\right)\,,
\end{align*}
hence, by \eqref{eq.bound.travel.dist}, the definition of $t^\star$, \eqref{eq.bound.travel.dist1} and the previous inequality,
\begin{align*}
&\left|\Psi(x,s,(u_i,x_i)_{1\leq i \leq n},u_{n+1})-s\right| 
\\
	&\qquad \qquad \leq \min\{|r-\phis(y,r,\delta_1)|\,;\,|\phis(y, r, \varepsilon/2) - \phis(y,r, \delta_2)| \}
\end{align*}
with $s\in \ens{\phis(y,r,\delta_2)}{\phis(y,r,\delta_1)} \subset \ens{\phis(y,r,\varepsilon/2)}{r}$ and \eqref{eq.sub.in.good.domain.general} holds.

\item If $\phis(y,r , \varepsilon/2)=0$, hence by \eqref{eq.bound.travel.dist}, the definition of $t^\star$ and \eqref{eq.bound.travel.dist1},
\begin{align*}
\left|\Psi(x,s,(u_i,x_i)_{1\leq i \leq n},u_{n+1})-s\right| 
&\leq |r-\phis(y,r,\delta_1)|
\end{align*}
with $s\in \ens{0}{\phis(y,r,\delta_1)}$ then $\Psi(x,s,(u_i,x_i)_{1\leq i \leq n},u_{n+1})\in \ens{0}{r}$ and \eqref{eq.sub.in.good.domain.general} holds.
\end{itemize}

3. If $x<y$ then in the same way, reaching $y$ by $x-y$ successive division events, we have 
\begin{align*}
\PP_{(x,s)}
	\Big(T_{y,r} \leq \varepsilon\Big)
&
	\geq
	e^{-(D+\barmu)\,L\frac{\varepsilon}{2}}\, \left(\frac{\mu(\bar s_L)}{\barmu}\right)^{y-x}
	\,P_b(x,y-x, t^\star)
\\
&
	\geq
	e^{-(D+\barmu)\,L\,\frac{\varepsilon}{2}}\, \left(\frac{\mu(\bar s_L)}{\barmu}\right)^{L-1}
	\,P_b(1,L-1, t^\star)\,.\qedhere
\end{align*}
\end{proof}

\subsection{Proof of Lemma~\ref{lem:LV}}
\label{sec:proof.lem.LV}

On $\{0\}\times (0,\Sin)$, we have $\mathcal{L}\tilde V=0=V$. So let us prove the result on $\NN^* \times (0,\bar s_1)$.
For convenience,  we consider the natural extension of $V$ to $x=0$ given by $V(0,s)= \log(\rho)^{-1} e^{\alpha s} + s^{-1} + (1+\theta)/(\bar s_1-s)^{p}$ for all $s\in (0,\bar s_1)$. As for all $(x,s)\in \NN^* \times (0,\bar s_1)$
\[
	\mathcal{L}\tilde V(x,s) = \mathcal{L}V(x,s) - D\,V(0,s)\,\1_{x=1}\leq \mathcal{L}V(x,s)
\]
and as $\tilde V=V$ on $\NN^* \times (0,\bar s_1)$, it is sufficient to prove that there exists $\eta>D$ and $\zeta>0$ such that, on $\NN^* \times (0,\bar s_1)$,
\[
	\mathcal{L}V \leq -\eta V + \zeta \psi\,.
\]

We will prove that there exists $\eta>D$ such that $\mathcal{L} V + \eta V$ is bounded from above on $\NN^* \times (0,\bar s_1)$. As $\psi \geq 1$ on $\NN^* \times (0,\bar s_1)$ it therefore implies the result.
To that end, let define, for all $(x,s)\in \NN \times (0,\bar s_1)$
\[
V_0:(x,s)\mapsto \log(\rho)^{-1}\,\rho^x e^{\alpha s},
\qquad V_1:(x,s)\mapsto s^{-1},
\qquad V_2:(x,s)\mapsto (1+\1_{x\leq 1}\theta) (\bar{s}_1-s)^{-p} 
\] 
so that $V=V_0+V_1+V_2$. By the linearity of $\mathcal{L}$, we then have $\mathcal{L}V=\mathcal{L}V_0+\mathcal{L}V_1+\mathcal{L}V_2$ on $\NN^* \times (0,\bar s_1)$, with for $(x,s)\in \NN^* \times (0,\bar s_1)$
\begin{align*}
\mathcal{L} V_0(x,s)
&=\left[[D(\Sin - s)- k \mu(s) x ]  \alpha + (\rho-1)\left(\mu(s)-\frac{D}{\rho}\right) x\right]\,V_0(x,s) \\
\mathcal{L}V_1(x,s) & = -\frac{D(\Sin - s)- k \mu(s) x }{s}\,V_1(x,s)\\
\mathcal{L} V_2(x,s)
&=\left[p \frac{D(\Sin - s)- k \mu(s)x }{\bar{s}_1-s} -\mu(s)\1_{x= 1}\,\frac{\theta}{1+\theta}+\,2\,D\,\theta\,\1_{x= 2}\right]\,V_2(x,s)
\end{align*}
We will prove that there exist $\eta>D$ such that $\mathcal{L} V_0 + \mathcal{L} V_1+ \eta (V_0+V_1)$ and $\mathcal{L} V_2 + \eta V_2$ are bounded from above on $\NN^* \times (0,\bar s_1)$.

\medskip
Let $\eta \in \mathbb{R}$ and let $0<\varepsilon< D\frac{\rho-1}{\rho}$. As $\alpha\geq \frac{\rho-1}{k}$ we have
\begin{align*}
(\mathcal{L} V_0 + \mathcal{L} V_1 + \eta (V_0+V_1))(x,s)
& \leq A(x, s)+B(x, s)
\end{align*}
with 
\begin{align*}
A(x, s)&:=\left[D\,\Sin\,  \alpha - \left(D\frac{\rho-1}{\rho}-\varepsilon\right) x + \eta\right]\,V_0(x,s),\\
B(x,s)&:=\left[-\frac{D(\Sin - s)- k \mu(s) x }{s}+ \eta-\varepsilon\,x\,\frac{V_0(x,s)}{V_1(x,s)}\right]\,V_1(x,s)\,.
\end{align*}
We easily check that $A$ is bounded on every set on the form $\llbracket 1,L\rrbracket \times (0,\bar s_1)$ with $L\geq 1$, moreover $\sup_{s\in(0,\bar s_1)}A(x, s)$ tends towards $-\infty$ when $x \to \infty$. Then $A$ is bounded from above.
In addition, from the expression of $V_0$ and $V_1$, $\frac{k \mu(s)}{s}-\varepsilon\,\frac{V_0(x,s)}{V_1(x,s)}\leq 0$ if $x \geq C+2\,\log(1/s)/\log(\rho)$, with $C:=\log\left(k\,\mu(\bar s_1)\log(\rho)/\varepsilon\right)/\log(\rho)$. Therefore, setting  $\bar \mu_1' = \sup_{s\in[0,\bar s_1]}\mu'(s)$, we obtain
\begin{align*}
B(x,s)&\leq \left[-\frac{D(\Sin - s)}{s}+ \eta+k \,\frac{\mu(s)}{s}\,\left|C+\frac{2}{\log(\rho)}\,\log\left(\frac{1}{s}\right)\right| \right]\,\frac{1}{s}\\
&\leq \left[-\frac{D(\Sin - s)}{s}+ \eta+k \,\bar \mu_1'\,|C|+\frac{2\,k \,\bar \mu_1'}{\log(\rho)}\,\left|\log\left(\frac{1}{s}\right)\right| \right]\,\frac{1}{s}
\end{align*}
The right member does not depend on $x$, is bounded on every set on the form $(r,\bar s_1)$ with $0<r<\bar s_1$, and tends towards $-\infty$ when $s\to 0$. Hence $B$ is bounded from above and $\mathcal{L} V_0 + \mathcal{L} V_1 + \eta (V_0+V_1)$ is bounded from above for every $\eta\in \mathbb{R}$.

\medskip

We easily check that $\mathcal{L} V_2 + \eta V_2$ is bounded on every set on the form $\mathbb{N^*} \times (0,r]$, with $0<r<\bar s_1$.
Moreover, for $x\geq 2$ and $\bar s_2<s<\bar s_1$,  we have 
\[
D(\Sin - s)- k \mu(s) x \leq D(\Sin - s)- 2\,k \mu(s) < D(\Sin -\bar{s}_2)- 2\,k\,\mu(\bar{s}_2)=0
\]
then
\begin{align*}
\sup_{x\geq 2}\frac{\mathcal{L} V_2(x,s)}{V_2(x,s)}
&\leq p \frac{D(\Sin - s)- 2\,k\,\mu(s) }{\bar{s}_1-s} + 2\,D\,\theta
\end{align*}
tends to $-\infty$ when $s\to \bar{s}_1$ then $\mathcal{L} V_2 + \eta V_2$ is bounded from above on $\mathbb{N^*}\setminus\{1\} \times (0,\bar s_1)$ for all $\eta\in\mathbb{R}$. For $x =1$, \eqref{hyp:theta_p} leads to 
\begin{align*}
\lim_{s \to \bar{s}_1} \frac{\mathcal{L} V_2(1,s)}{V_2(1,s)}
&= \lim_{s \to \bar{s}_1} \left[\frac{p\, [D(\Sin - s)- k \mu(s)] }{\bar{s}_1-s} - \frac{\theta \mu(s)}{1+\theta}\right]\\
&= \lim_{s \to \bar{s}_1} \frac{p\, [D(\bar s_1 - s)+ k (\mu(\bar s_1)-\mu(s))}{\bar{s}_1-s} - \frac{\theta \mu(\bar s_1)}{1+\theta}\\
& = p[D+ k \,\mu'(\bar s_1)]- \frac{\theta \mu(\bar s_1)}{1+\theta}\\
& <-D
\end{align*}
It follows that $\mathcal{L} V_2 + \eta\,V_2$ is bounded from above for all $0<\eta<-\lim_{s \to \bar s_1} \mathcal{L} V_2(1,s)/V_2(1,s)$.
Therefore Lemma~\ref{lem:LV} holds and we can choose any $\eta\in (D, -\lim_{s \to \bar s_1} \mathcal{L} V_2(1,s)/V_2(1,s))$.

Note that relaxing the assumptions as in Remark~\ref{remark:mu.lipschitz}, the limit above does not necessary exist. However we can bound from above $\limsup_{s \to \bar s_1} \mathcal{L} V_2(1,s)/V_2(1,s)$ by $-D$ replacing $\mu'(\bar s_1)$ by $\klip$ in \eqref{hyp:theta_p}. In the same way, in the upper bound of $B$, $\bar \mu_1'$ can be replaced by a local Lipschitz constant of $\mu$ in the neighborhood of 0 when $s$ tends towards 0.

\subsection{Proof of Lemma~\ref{lem:bertrand}}
\label{sec:lem.bertrand}

As $s_1\mapsto \PP_{(y,r)}\left((X_{\tau},S_{\tau})\in \{x\}\times [s_0,s_1]\right)$ is increasing, we assume, without loss of generality, that $s_1\leq s_K$.
In the same way as the proof of Proposition~\ref{prop.partout.a.partout}, we prove that the probability $\PP_{(y,r)}\left((X_\tau,S_\tau)\in\{x\}\times [s_0,s_1]\right)$ is bounded from below by the probability that the process $(X_t,S_t)_t$
\begin{enumerate}
\item reaches $B(L,[\tilde s_0,\tilde s_1])$ before $\tau-\varepsilon$ (\textit{i.e.} $T_{L,[\tilde s_0,\tilde s_1]}\leq \tau-\varepsilon$);
\item stays in $\llbracket 1, L \rrbracket \times [\tilde s_0,\tilde s_1]$ during the time interval $[T_{L,[\tilde s_0,\tilde s_1]},\, \tau-\varepsilon]$;
\item reaches $\{x\}\times [s_0,s_1]$ in the time interval $[\tau-\varepsilon,\,\tau]$ and stays in this set until $\tau$;
\end{enumerate}
that is
\begin{align}
\nonumber
& \PP_{(y,r)}\left((X_\tau,S_\tau)\in\{x\}\times [s_0,s_1]\right)
\\
\nonumber
& \qquad \geq \PP_{(y,r)}\left(T_{L,[\tilde s_0,\tilde s_1]}\leq \tau-\varepsilon\right)\,
\\
\nonumber
& \qquad \qquad
		\times \PP_{(y,r)}
			\left((X_t,S_t)\in \llbracket 1, L \rrbracket \times [\tilde s_0,\tilde s_1], 
				\forall t\in [T_{L,[\tilde s_0,\tilde s_1]},\tau-\varepsilon]\ \Big| \
				 T_{L,[\tilde s_0,\tilde s_1]}\leq \tau-\varepsilon \right)
\\ \label{lemma.reach.small.substrate.decompo}
& \qquad \qquad
		\times \PP_{(y,r)}
		 \left( (X_\tau,S_\tau)\in\{x\}\times [s_0,s_1] \ \Big| \ E \right)\,,
\end{align}
where
\[
E:= \{T_{L,[\tilde s_0,\tilde s_1]}\leq \tau-\varepsilon\} \cap 
		\left\{(X_t,S_t)\in \llbracket 1, L \rrbracket \times [\tilde s_0,\tilde s_1], 
				\forall t\in [T_{L,[\tilde s_0,\tilde s_1]},\tau-\varepsilon]\right\}
\]
with $L$, $\tilde s_0$, $\tilde s_1$ and $\varepsilon$ well chosen so that we can bound from below the three probabilities in the right member of \eqref{lemma.reach.small.substrate.decompo}. More precisely, we will choose $L$ sufficiently large such that the substrate concentration $\frac{s_1+s_0}{2}$ can be reached from $S_K$ in a time less than $\tau$ with $L$ individuals; and $\tilde s_0$,  $\tilde s_1$ and $\varepsilon$ will be chosen such that $[\tilde s_0, \tilde s_1]\subset [s_0,s_1]$ is centered in $\frac{s_1+s_0}{2}$ and such that the process can not exit from $[s_0,s_1]$ is a time less that $\varepsilon$ with a bacterial population in $\setunL$.

\medskip

From Lemma~\ref{lem:defsbar}, there exists $L_{s_0} \geq x\wedge \max_{(y,r)\in K}y$ such that $\bar s_\ell <\frac{s_1+s_0}{2}$ for all $\ell \geq L_{s_0}$.
Moreover, for $\ell \geq L_{s_0}$, as $\bar s_\ell<\frac{s_1+s_0}{2}<S_K$, then $\phit\left(\ell,S_K, \frac{s_1+s_0}{2}\right)<+\infty$ and
\begin{align*}
\frac{s_1+s_0}{2} 
	&= S_K + \int_0^{\phit\left(\ell,S_K, \frac{s_1+s_0}{2}\right)} 
		\left[D(\Sin-\phi(\ell,S_K,u))-k\,\mu(\phi(\ell,S_K,u))\,\ell\right]\,\dif u
\\
	&\leq
	S_K + 
		\left[D\left(\Sin-\frac{s_1+s_0}{2}\right)-k\,\mu\left(\frac{s_1+s_0}{2}\right)\,\ell\right]\,\phit\left(\ell,S_K, \frac{s_1+s_0}{2}\right)
\end{align*}
then
\[
\phit\left(\ell,S_K, \frac{s_1+s_0}{2}\right)
	\leq \frac{S_K-\frac{s_1+s_0}{2}}{k\,\mu\left(\frac{s_1+s_0}{2}\right)\,\ell-D\left(\Sin-\frac{s_1+s_0}{2}\right)}\,.
\]
The right term in the previous inequality tends to 0 when $\ell \to \infty$, we can then choose $L \geq L_{s_0}$ such that $\phit\left(L,S_K, \frac{s_1+s_0}{2}\right) <\tau$.

Let set $0<\varepsilon<\min\left\{\tau-\phit(L,S_K, \frac{s_1+s_0}{2}),\,\frac{s_1-s_0}{2\,\max\{D\,\Sin,\, k\,\barmu\, L\}}\right\}$ and let us define $\tilde s_0 = s_0+\varepsilon\,\max\{D\,\Sin,\, k\,\barmu\, L\}$ and $\tilde s_1 = s_1-\varepsilon\,\max\{D\,\Sin,\, k\,\barmu\, L\}$, then $\tilde s_0<\tilde s_1$ and $[\tilde s_0, \tilde s_1] \subset [s_0,s_1]$.

From Corollary~\ref{cor:monotonyphit},  $\tau-\varepsilon>\phit(L,S_K, \frac{s_1+s_0}{2})>\phit(L,S_K, \tilde s_1)>\phit(L,r, \tilde s_1)$, for all $(y,r)\in K$. Then from Lemma~\ref{lemma.reach.T.Smin.Smax}-\ref{lemma.reach.T.Smin.Smax.s_sup_Smax} and Remark~\ref{rk.Pd.Pb.inc}, 
\begin{align}
\label{lemma.reach.small.substrate.proba1}
\PP_{(y,r)}\left(T_{L,[\tilde s_0,\tilde s_1]}\leq \tau-\varepsilon\right)
	&\geq
	e^{-(D+\barmu)\,(\tau-\varepsilon-\delta)\,L}\,
	\left(\frac{\mu(\bar s_L)}{\barmu}\right)^{L-1}\,P_b(1,L-1, \delta)=:C_1
\end{align}
with $\delta:=(\tau-\varepsilon-\phit\left(L, S_K,\tilde s_1\right))\,\frac{D\,\left|\bar s_L-s_K\right|}{D\,\left|\bar s_{L}-s_K\right|+\max\{D\,\Sin,\,k\,\barmu\,L\}}$.

Moreover, from Lemma~\ref{lemma.stay.in.S.general}, there exists $C_2>0$ such that for all $(z,s)\in B(L, [\tilde s_0,\tilde s_1])$, 
\[
\PP_{(z,s)}\left((X_t,S_t) \in \llbracket 1,L\rrbracket \times [\tilde s_0, \, \tilde s_1] ,\,
				\forall t\in[0,\tau-\varepsilon]\right)\geq C_2,\,
\]
therefore, by Markov Property
\begin{align}
\label{lemma.reach.small.substrate.proba2}
\PP_{(y,r)}
			\left((X_t,S_t)\in \llbracket 1,  L \rrbracket \times [\tilde s_0,\tilde s_1], 
				\forall t\in [T_{ L,[\tilde s_0,\tilde s_1]},\tau-\varepsilon] \ | \ T_{ L,[\tilde s_0,\tilde s_1]}\leq \tau-\varepsilon\right)
	\geq C_2\,.
\end{align}

In addition, on the event $\left\{X_u \in \llbracket 1, L \rrbracket,\, \forall u\in[0,\varepsilon]\right\}$,
\begin{align*}
\left|S_\varepsilon-S_0 \right| 
	&= \left|\int_0^\varepsilon \left(D\,(\Sin-S_u)-k\,\mu(S_u)\,X_u \right)\,\dif u\right|
	\leq \varepsilon\,\max\{D\,\Sin,\, k\,\barmu\,  L\}
\end{align*}
then, as $\tilde s_0-s_0=s_1-\tilde s_1=\varepsilon\,\max\{D\,\Sin,\, k\,\barmu\,  L\}$, for all $(z,s)\in \llbracket 1, L \rrbracket \times [\tilde s_0,\tilde s_1])$, 
\[
	\PP_{(z,s)}\left(S_\varepsilon \in [s_0,s_1] \, | \, 
		X_u \in \llbracket 1, L \rrbracket,\, \forall u\in[0,\varepsilon]\right) =1\,.
\]
Therefore,  bounding from below the probability by the probability that, in addition, there is no event if $z=x$, there are $z-x$ washouts is $z>x$ and there are $x-z$ divisions if $z<x$ in the time interval $[0,\varepsilon]$ and no more event, then
\begin{align*}
&\PP_{(z,s)}
		 \left( (X_{\varepsilon},S_{\varepsilon})\in\{x\}\times [s_0,s_1]\right)
\\
	& \qquad \geq 
	\PP_{(z,s)}
		 \left(T_1>\varepsilon \right)\,\1_{z=x}
\\
	& \qquad\quad 
	+ \PP_{(z,s)}
		 \left(\bigcap_{i=1}^{z-x}\{T_i\ \leq \varepsilon\} \cap \{ X_{T_i} = z-i \} \cap \{T_{z-x+1}>\varepsilon\}\right)\,\1_{z>x}
\\
	& \qquad\quad 
	+\PP_{(z,s)}
		 \left(\bigcap_{i=1}^{x-z}\{T_i\ \leq \varepsilon\} \cap \{ X_{T_i} = z+i \} \cap \{T_{x-z+1}>\varepsilon\}\right)\,\1_{z<x}\,.
\end{align*}
For all $u_1<\dots < u_{|z-x|}\leq\varepsilon$, from the Markov Property and Lemma~\ref{prop.minore.proba.evt}, if $z>x$
\begin{align*}
\PP_{(z,s)}\left(T_{|z-x|+1}>\varepsilon \ |\ \mathcal{E}_D(u_1,\,\dots,u_{|z-x|}\right)
 &=  \PP_{(x,\Psi(z,s,(u_i,z-i)_{1\leq i \leq |z-x|-1},u_{|z-x|}))}\left(T_1>\varepsilon-u_{|z-x|}\right)
\\ &			\geq e^{-(D+\barmu)\,x\,\varepsilon}
\end{align*}
and if $z<x$
\begin{align*}
\PP_{(z,s)}\left(T_{|z-x|+1}>\varepsilon \ |\ \mathcal{E}_B(u_1,\,\dots,u_{|z-x|}\right) 
	&= \PP_{(x,\Psi(z,s,(u_i,z+i)_{1\leq i \leq |z-x|-1},u_{|z-x|}))}\left(T_1>\varepsilon-u_{|z-x|}\right)
\\	&\geq e^{-(D+\barmu)\,x\,\varepsilon}
\end{align*}
then, still from Lemma~\ref{prop.minore.proba.evt},
\begin{align*}
\PP_{(z,s)}
		 \left( (X_{\varepsilon},S_{\varepsilon})\in\{x\}\times [s_0,s_1]\right)
& \geq 
	e^{-(D+\barmu)\,x\,\varepsilon}\,\1_{z=x}
\\
	& \quad 
	+ P_d( L, L-1,\varepsilon)\,e^{-(D+\barmu)\,x\,\varepsilon}\,\1_{z>x}
\\
	& \quad 
	+\left(\frac{\mu(\bar s_L)}{\barmu}\right)^{ L-1}\,P_b(1, L-1, \varepsilon)\,
	e^{-(D+\barmu)\,x\,\varepsilon}\,\1_{z<x}
\\
	&\geq C_3
\end{align*}
with $C_3:=e^{-(D+\barmu)\,x\,\varepsilon}\,
	\min\left\{P_d( L, L-1,\varepsilon)\,e^{-(D+\barmu)\,x\,\varepsilon},\,
	\left(\frac{\mu(\bar s_L)}{\barmu}\right)^{ L-1}\,P_b(1, L-1, \varepsilon)\right\}$.
Then by Markov Property,
\begin{align}
\label{lemma.reach.small.substrate.proba3}
\PP_{(y,r)}
		 \left( (X_\tau,S_\tau)\in\{x\}\times [s_0,s_1] \ | \ E \right)
	\geq C_3\,.
\end{align}

\medskip

Finally, from \eqref{lemma.reach.small.substrate.decompo}, \eqref{lemma.reach.small.substrate.proba1}, \eqref{lemma.reach.small.substrate.proba2} and \eqref{lemma.reach.small.substrate.proba3}
\[
\PP_{(y,r)}\left((X_\tau,S_\tau)\in\{x\}\times [s_0,s_1]\right)
\geq C_1\,C_2\,C_3=:\epsilon_0>0\,.
\]

\subsection{ Lemma~\ref{lem:lyap2} and its proof}

\begin{lemma}
\label{lem:lyap2} 
There exists $\varepsilon>0$ and $A,C, \beta>0$ such that for all 
$(x,s) \in \mathbb{N}^* \times [\bar{s_1}, +\infty)$
\[
\mathbb{E}_{(x,s)}\left[e^{(D+C)(T_\varepsilon\wedge \Text)} \right] \leq A  e^{\beta s} \qquad
\text{and} \qquad
\PP_{(x,s)}(T_\varepsilon<\infty)>0.
\]
with $T_\varepsilon=T_{ \mathbb{N}^*\times (0,\bar s_1-\varepsilon]}:=\inf\{t\geq 0, \, (X_t,S_t)\in \mathbb{N}^*\times (0,\bar s_1-\varepsilon]\}$ the hitting time of $\mathbb{N}^*\times (0,\bar s_1-\varepsilon]$.
\end{lemma}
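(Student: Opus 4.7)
The plan is to establish the two claims separately. For the first claim I will look for a Lyapunov function of the product form $V(x,s) = a_x\, e^{\beta s}$ on $\mathbb{N} \times \mathbb{R}_+$, with $a_0 = 1$, $a_1 = \alpha > 1$ and $a_x = 1$ for $x \geq 2$, and with constants $\alpha, \beta > 0$ and $\varepsilon > 0$ chosen so that the strict drift inequality
\[
\mathcal{L} V(x,s) \leq -(D + C)\, V(x,s), \qquad (x,s) \in \mathbb{N}^* \times (\bar s_1 - \varepsilon, +\infty),
\]
holds for some $C > 0$. The crucial feature is $a_1 > a_0 = a_2$: both jumps from $x = 1$ (washout to $x=0$ and division to $x=2$) then contribute negative terms, while the exponential factor $e^{\beta s}$ with $\beta$ large enough produces a strong negative continuous drift at states $x \geq 2$, thanks to $D(\Sin - s) - 2k\mu(s) < 0$ for $s \geq \bar s_2$ and in particular for $s \geq \bar s_1 - \varepsilon$ with $\varepsilon$ small.

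Writing $g(x,s) := D(\Sin - s) - k\mu(s)\,x$ for the substrate drift, a direct case split from \eqref{eq:generator} yields
\begin{align*}
\mathcal{L} V(1,s)/V(1,s) &= \beta\, g(1,s) + (1 - \alpha)(\mu(s) + D)/\alpha, \\
\mathcal{L} V(2,s)/V(2,s) &= \beta\, g(2,s) + 2D(\alpha - 1), \\
\mathcal{L} V(x,s)/V(x,s) &= \beta\, g(x,s), \qquad x \geq 3.
\end{align*}
For $s \geq \bar s_1$ one has $g(1,s) \leq 0$; the definition of $\bar s_1$ combined with $\mu \in \mathcal{C}^1$ yields $g(2,s) \leq -k\mu(\bar s_1) + O(\varepsilon)$ and $g(x,s) \leq -2k\mu(\bar s_1) + O(\varepsilon)$ for $x \geq 3$ uniformly on $s \geq \bar s_1 - \varepsilon$; for $s \in (\bar s_1 - \varepsilon, \bar s_1)$ one also controls $g(1,s) \leq (D + k\mu'(\bar s_1))\varepsilon$. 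Choosing in order $\alpha$ large enough that $(\alpha - 1)(\mu(\bar s_1) + D)/\alpha > D$ (under the standing assumption $\mu(\bar s_1) > D$ of Theorem~\ref{th:main-qsd}, $\alpha = 2$ already suffices), then $\beta$ large enough that $k\mu(\bar s_1)\beta > 2D(\alpha - 1) + D$, then a small $C > 0$, and finally $\varepsilon$ small enough to absorb the $O(\varepsilon\beta)$ perturbations, all three inequalities become simultaneously strict with a common $C > 0$.

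The first claim then follows by the same stopping-time supermartingale argument as in Lemma~\ref{lem:lyapunov}: setting $\tau := T_\varepsilon \wedge \Text$, the drift inequality yields that $(e^{(D+C)(t\wedge\tau)}\, V(X_{t\wedge\tau}, S_{t\wedge\tau}))_{t \geq 0}$ is a non-negative supermartingale (after standard localization based on the finiteness of $\mathbb{E}[V(X_t,S_t)]$). Since $V \geq 1$ on the whole of $\mathbb{N} \times \mathbb{R}_+$ (the key reason for taking $a_0 = 1$ rather than $0$, so that the bound does not collapse on the event $\{\tau = \Text\}$), taking expectations and letting $t \to \infty$ by monotone convergence gives $\mathbb{E}_{(x,s)}[e^{(D+C)\tau}] \leq V(x,s) \leq \alpha\, e^{\beta s}$, which is the first assertion with $A = \alpha$. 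For the second claim, a direct path construction suffices: with positive probability the first jump from $(x,s)$ is a division at some finite time $T_1$ (the division rate is bounded below by $\mu(\bar s_1) > 0$ until the first jump) and no further jump occurs during the subsequent interval $[T_1, T_1 + \phit(x+1, S_{T_1}, \bar s_1 - \varepsilon)]$; along the flow with $x+1 \geq 2$ bacteria, $S_t$ strictly decreases toward its equilibrium $\bar s_{x+1} \leq \bar s_2$, which lies below $\bar s_1 - \varepsilon$ for $\varepsilon$ small, so $S_t$ crosses $\bar s_1 - \varepsilon$ and $T_\varepsilon < \infty$ on this event.

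The main obstacle is reconciling the three drift inequalities with a single common $C > 0$: $\alpha > 1$ is needed to create negative drift at $x = 1$, but a large $\alpha$ produces a positive contribution $2D(\alpha - 1)$ at $x = 2$ that must then be dominated by choosing $\beta$ large, while a large $\beta$ in turn amplifies the positive perturbation $\beta\, g(1,s)$ in the thin strip $s \in (\bar s_1 - \varepsilon, \bar s_1)$ at $x = 1$. The hypothesis $\mu(\bar s_1) > D$ is precisely what closes this triangle of constraints with $\alpha$, $\beta$ moderate and $\varepsilon$ only mildly small.
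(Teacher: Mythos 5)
Your proof is correct and takes essentially the same approach as the paper: a Lyapunov function of the product form $a_x e^{\beta s}$ with $a_1 > \max(a_0, a_2)$, verification of the strict drift inequality $\mathcal{L}V \leq -(D+C)V$ on $\mathbb{N}^*\times[\bar s_1-\varepsilon,+\infty)$ by the same three-case split in $x$, a stopped non-negative supermartingale argument, and a direct path construction (one division then a jump-free flow) for the positivity of $\PP_{(x,s)}(T_\varepsilon<\infty)$. The only cosmetic difference is that you take $a_0=1$ (giving the clean lower bound $V\geq 1$ but forcing $\alpha$ large enough that $(\alpha-1)(\mu(\bar s_1)+D)/\alpha>D$), whereas the paper takes $a_0=\delta_0$ small so the washout term at $x=1$ contributes almost the full $-D$, making the drift inequality at $x=1$ hold for any $\delta_1>0$ irrespective of the relation between $\mu(\bar s_1)$ and $D$.
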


\begin{proof}
Let $g$ be defined for $(x,s)\in \mathbb{N}\times \mathbb{R}_+$ by
\[
g(x,s) = \left( \1_{x\geq 2} + (1+\delta_1) \1_{x=1} +\delta_0 \1_{x=0} \right) e^{\beta s}\,,
\]
with $\delta_0$, $\delta_1$ and $\beta$ positive constants (fixed below). Then  $g\geq \min\{1,\delta_0\}$ and
\begin{align*}
\frac{\mathcal{L} g(x,s)}{g(x,s)} + D
&=
\begin{cases}
\left[D(S_{in}-s) - k \mu(s)\right] \beta - \mu(s)\, \frac{\delta_1}{1+\delta_1}  + D\, \frac{\delta_0}{1+\delta_1}&  \text{if } x = 1,\\
\left[D(S_{in}-s) -2\, k \mu(s)\right] \beta + D\,(1+\delta_1) & \text{if } x=2,\\
\left[D(S_{in}-s) - k \mu(s)\,x\right] \beta +D & \text{if } x\geq 3.
\end{cases}
\end{align*}
We can choose $\delta_0$, $\delta_1$, $\beta$ and $\varepsilon>0$ such that 
\begin{align}
\label{eq:lg.leq.minus_C_plus_D_g}
\mathcal{L} g(x,s) \leq -(C+ D) g(x,s), \qquad \forall (x,s)\in \mathbb{N}^*\times [\bar s_1-\varepsilon,+\infty)\,,
\end{align}
with $C>0$.
Inded, let $\delta_1>0$ and let $\bar \varepsilon\in (0,\bar s_1 - \bar s_2)$ be fixed. From Lemmas \ref{lem:defsbar} and \ref{lem:rapprochement}, we have $D\,(\Sin-\bar s_1+\bar \varepsilon)- 2\,k\,\mu(\bar s_1-\bar \varepsilon)<0$, then we can choose $\beta>0$ sufficiently large such that 
\[
C_1 :=\left[D(S_{in}-\bar s_1+\bar \varepsilon) -2\, k \mu(\bar s_1-\bar \varepsilon)\right] \beta + D\,(1+\delta_1) <0\,.
\]
Moreover, $\left[D(S_{in}-\bar s_1+\varepsilon) - k \mu(\bar s_1-\varepsilon)\right] \beta - \mu(\bar s_1-\varepsilon)\, \frac{\delta_1}{1+\delta_1} \longrightarrow_{\varepsilon\to 0}- \mu(\bar s_1)\, \frac{\delta_1}{1+\delta_1}<0$, then we can choose $\varepsilon \in (0, \bar \varepsilon)$ and $\delta_0>0$ sufficiently small such that 
\[
C_2 := \left[D(S_{in}-\bar s_1+\varepsilon) - k \mu(\bar s_1-\varepsilon)\right] \beta - \mu(\bar s_1-\varepsilon)\, \frac{\delta_1}{1+\delta_1}  + D\, \frac{\delta_0}{1+\delta_1}<0\,.
\]
Setting such $\beta$, $\delta_0$ and $\varepsilon$, then for all $x\geq 1$ and $s \geq \bar s_1-\varepsilon$ we have  
\begin{align*}
\frac{\mathcal{L} g(x,s)}{g(x,s)} + D
&\leq
\begin{cases}
C_2 &  \text{if } x = 1\\
C_1 & \text{if } x\geq 2\\
\end{cases}
\end{align*}
and \eqref{eq:lg.leq.minus_C_plus_D_g} holds with $C:=-(C_1\vee C_2)>0$.

For any initial condition $(x,s) \in \mathbb{N}^* \times [\bar s_1, +\infty)$, we have $(X_u,S_u)\in \mathbb{N}^*\times [\bar s_1-\varepsilon,+\infty)$ for all $u<T_\varepsilon\wedge \Text$, then 
by standard arguments using the Dynkin's formula and \eqref{eq:lg.leq.minus_C_plus_D_g} (see for instance \cite[Theorem 2.1]{MTIII} and its proof) $\left(g\left(X_{t\wedge T_\varepsilon\wedge \Text},S_{t\wedge T_\varepsilon\wedge \Text}\right)e^{(C+D)\left(t\wedge T_\varepsilon\wedge \Text\right)}\right)_t$ is a nonnegative super-martingale. Then, as by \eqref{eq.def.Text} $T_\varepsilon\wedge \Text$ is a.s. finite,  by classical arguments (stopping time theorem applied to truncated stopping times and Fatou's lemma)
\[
\min(1,\delta_0)\,\mathbb{E}_{(x,s)}\left[ e^{(C+D) (T_\varepsilon\wedge \Text)}  \right] \leq \mathbb{E}_{(x,s)}\left[ g(X_{T_\varepsilon\wedge \Text},S_{T_\varepsilon\wedge \Text}) e^{(C+D) (T_\varepsilon\wedge \Text)} \right] \leq g(x,s)
\]
which leads to the first part of the lemma.

\medskip

We can show that the upper bound of Lemma~\ref{lemme.control.phi_moins_1} holds even if $s_0\geq \bar s_1$. Then from Lemma~\ref{lem:defsbar}, for all $\ell \geq 2$ and $s\geq \bar s_1-\varepsilon>\bar s_2$, 
\[
	\phit(\ell,s,\bar s_1-\varepsilon) \leq \frac{s-\bar s_1+\varepsilon}{D\,(\bar s_1-\varepsilon-\bar s_\ell)}\leq \frac{s-\bar s_1+\varepsilon}{D\,(\bar s_1-\varepsilon-\bar s_2)}=:t_{\bar s_1-\varepsilon}\,.
\]
Then, if $x\geq 2$,  from Lemma~\ref{prop.minore.proba.evt}
\begin{align}
\PP_{(x,s)}(T_\varepsilon<\infty) 
		& 
	 \geq \PP_{(x,s)}(t_{\bar s_1-\varepsilon}<T_1) \geq e^{-(D+\mu(s))\,x\,t_{\bar s_1-\varepsilon}}>0\,.
\label{eq:proba.T.leq.Text.positive}
\end{align}
If $x=1$, then for all $\delta>0$, from Lemma~\ref{prop.minore.proba.evt} and the Markov property
\begin{align*}
\PP_{(1,s)}(T_\varepsilon<\infty) 
		& \geq \PP_{(1,s)}(\{T_1\leq \delta\} \cap \{X_{T_1}=2\} \cap \{T_\varepsilon<\infty\})
\\
	&\geq \int_0^{\delta}
	\mu\left(\phi(1,s,u)\right)\,
	e^{-(D+\mu(s))\,u}\,	
	\PP_{(1,s)}
	\Big(T_\varepsilon<\infty \ \Big| \{X_u=2\}\cap\{T_1=u) \Big)\,\dif u
\\
	&= \int_0^{\delta}
	\mu\left(\phi(1,s,u)\right)\,
	e^{-(D+\mu(s))\,u}\,	
	\PP_{(2,\phi(1,s,u))} (T_\varepsilon<\infty)\,\dif u\,.
\end{align*}
From Lemma~\ref{lem:rapprochement}, $\phi(1,s,u)>\bar s_1>\bar s_1-\varepsilon$, then by \eqref{eq:proba.T.leq.Text.positive}, $\PP_{(1,s)}(T_\varepsilon<\infty) >0$.
\end{proof}

\section{Theorems of \cite{BCGM} and \cite{CV20}}

We recall in this section the theorems of \cite{BCGM} and \cite{CV20} which establish the convergence towards a unique quasi-stationary distribution. 

\medskip

Let $(X_t)_{t\geq 0}$ be a càd-làg Markov process on the state space $\mathcal{X}\cup \{\partial\}$, where $\mathcal{X}$ is a measurable space and $\partial$ is an absorbing state.  
Let $V:\mathcal{X} \to (0,\infty)$ a measurable function. 
We assume that for any $t>0$, there exists $C_t>0$ such that $\mathbb{E}_x[V(X_t)\1_{X_t\notin \partial} ]\leq C_t\,V(x)$ for any $x\in \mathcal{X}$. We denote by $\mathcal{B}(V)$ the space of measurable functions $f:\mathcal{X}\to\mathbb{R}$ such that $\sup_{x\in \mathcal{X}} \frac{|f(x)|}{V(x)}<\infty$ and $\mathcal{B}_+(V)$ its positive cone. 
Let $(M_t)_{t\geq 0}$ the semigroup defined for any measurable function $f\in \mathcal{B}(V)$ and any $x\in\mathcal{X}$ by
\[
	M_t f(x) := \mathbb{E}_x[f(X_t)\,\1_{X_t\notin \partial}]
\]
and let define the dual action, for any $\xi\in \mathcal{P}(V)$ , with $\mathcal{P}(V)$ the set of probability measures that integrate $V$, by
\[
	\xi M_t f := \mathbb{E}_\xi[f(X_t)\,\1_{X_t\notin \partial}] =  \int_\mathcal{X} M_tf(x)\xi(\dif x).
\]

\begin{hypothesis}[Assumption~A of \cite{BCGM}]
\label{hyp:BCGM}
Let $\psi:\mathcal{X} \to (0,\infty)$ such that $\psi\leq V$. There exist $\tau>0$, $\beta>\alpha>0$, $\theta>0$, $(c,d)\in (0,1]^2$, $K\subset \mathcal{X}$ and $\nu$ a probability measure on $\mathcal{X}$ supported by $K$ such that $\sup_K V/\psi <\infty$ and
\begin{itemize}
\item[(A1)]  $M_\tau V\leq \alpha\,V + \theta\,\1_K\,\psi$,
\item[(A2)]  $M_\tau \psi\geq \beta\,\psi$,
\item[(A3)]  $\inf_{x\in K} \frac{M_\tau(f\,\psi)(x)}{M_\tau \psi(x)}\geq c\,\nu(f)$ \quad for all $f\in \mathcal{B}_+(V/\psi)$,
\item[(A4)] $\nu\left( \frac{M_{n\tau}\psi}{\psi}\right) \geq d \sup_{x\in K} \frac{M_{n\tau}\psi(x)}{\psi(x)}$ \quad for all positive integers $n$.
\end{itemize}
\end{hypothesis}

\begin{theorem}[Theorem~5.1.  of \cite{BCGM}]
\label{th:BCGM}
Assume that $(M_t)_{t\geq 0}$ satisfies Assumption~\ref{hyp:BCGM} with $\inf_\mathcal{X}V>0$. Then, there exist a unique quasi-stationary distribution $\pi$ such that $\pi\in\mathcal{P}(V)$, and $\lambda_0>0$, $h\in\mathcal{B}_+(V)$, $C,\omega>0$ such that for all $\xi\in\mathcal{P}(V)$ and $t\geq 0$,
\[
\lVert e^{\lambda_0 \,t}\,\PP_\xi(X_t\in \cdot)-\xi(h)\,\pi\rVert_{TV}\leq C\,\xi(V)\,e^{-\omega\,t}
\]
and
\[
\lVert\PP_\xi(X_t\in . | X_t\neq \partial)-\pi\rVert_{TV}\leq C\,\frac{\xi(V)}{\xi(h)}\,e^{-\omega\,t},
\]
with $\lVert.\rVert_{TV}$ the total variation norm on $\mathcal{X}$.
\end{theorem}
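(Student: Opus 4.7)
The strategy is to transfer the problem to a genuine conservative Markov kernel built from $M_\tau$ using the auxiliary function $\psi$ as a weight, apply a classical Harris theorem in that transformed setting, and then read off the QSD $\pi$, the principal eigenfunction $h$, the eigenvalue $\lambda_0$ and the exponential rates by inverting the transformation. The main obstacle is that neither the total mass $\xi M_\tau\mathbf{1}$ nor the normalization $M_\tau\psi$ is constant, so the minorization (A3) cannot be exploited directly on $M_\tau$; the $\psi$-transform is chosen precisely to turn (A3) into a standard Doeblin small-set condition.

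Concretely, first I would introduce the Markov kernel
\[
Q(x,\mathrm{d}y) := \frac{\psi(y)\,M_\tau(x,\mathrm{d}y)}{M_\tau\psi(x)},
\]
well-defined thanks to (A2). Dividing (A1) by (A2) yields the drift
\[
Q(V/\psi)(x) \;\le\; \frac{\alpha}{\beta}\,\frac{V(x)}{\psi(x)} + \frac{\theta}{\beta}\,\mathbf{1}_K(x),
\]
which (possibly after iterating $Q$ so the contraction constant is $<1$) is a Meyn–Tweedie geometric drift with Lyapunov $V/\psi$ and small-set candidate $K$, while (A3) is exactly $Q(x,\cdot)\ge c\,\nu$ on $K$. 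The classical Harris theorem therefore applies to $Q$: there is a unique $Q$-invariant probability $\mu_Q$ with $\mu_Q(V/\psi)<\infty$, and geometric convergence in the $V/\psi$-weighted total variation norm for all starting distributions $\eta$ with $\eta(V/\psi)<\infty$.

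Next I would extract the triple $(\lambda_0, h, \pi)$ from $\mu_Q$. For the eigenvalue, set $a_n := \nu(M_{n\tau}\psi)/\nu(\psi)$; using the semigroup property together with (A2) (to keep $a_n$ strictly positive) and (A4) (to forbid subexponential corrections), one shows that $a_{n+m}/a_n a_m \to 1$ and that $a_n^{1/n}\to e^{-\lambda_0\tau}$ with $\lambda_0>0$, the strict positivity coming from the fact that the Lyapunov contraction of $Q$ forces the process to revisit $K$ while (A3) combined with the existence of the absorbing state $\partial$ prevents $a_n$ from staying constant. The principal right-eigenfunction is then obtained as the pointwise limit $h(x) := \lim_n e^{\lambda_0 n\tau}\,M_{n\tau}\psi(x)$ in $\mathcal{B}(V)$: $Q$-ergodicity gives the ratios $M_{n\tau}\psi(x)/\psi(x)$ a geometric Cauchy property when tested against $\mu_Q$, and (A4), which says $\sup_K M_{n\tau}\psi/\psi \le d^{-1}\nu(M_{n\tau}\psi/\psi)$, upgrades this into a Cauchy property in the weighted supremum norm $\|\cdot\|_V$. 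Dually, the QSD is read off as $\pi := \lim_n \nu M_{n\tau}/\nu(M_{n\tau}\mathbf{1})$, and one checks $\pi M_\tau = e^{-\lambda_0\tau}\pi$ directly from the construction.

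With $(\lambda_0,h,\pi)$ in hand, one forms the genuine Markov semigroup $\widehat{M}_t f := e^{\lambda_0 t}\,h^{-1}M_t(h f)$, whose unique invariant probability is $h\pi/\pi(h)$ and which satisfies a Lyapunov–small-set pair with Lyapunov $V/h$ and the same small set $K$ (both transported from the bounds on $Q$). Applying the standard Harris theorem to $\widehat{M}_t$ gives both bounds of the theorem: the second (conditioned on non-extinction) after dividing through by $\widehat{M}_t\mathbf{1}=1$, and the first (mass-rescaled) after multiplying by $h$ and controlling $e^{\lambda_0 t}\xi(M_t\mathbf{1})-\xi(h)$, which decays geometrically by the same argument that identified $\lambda_0$. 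The passage from discrete times $n\tau$ to all $t\ge 0$ uses $M_{n\tau+s}=M_{n\tau}M_s$ together with the bound $M_s V\le C_s V$ to interpolate with at most a constant loss. I expect the technically hardest step to be the construction of $h$ as a genuine limit in the weighted supremum norm: the $Q$-contraction lives naturally on the space of measures, and transporting it to a pointwise-in-$x$ geometric Cauchy property on functions of (possibly) unbounded $V$-weight is precisely the step where (A4) is indispensable, since it is the only assumption that compares the sup over $K$ of the ratios $M_{n\tau}\psi/\psi$ to their $\nu$-average.
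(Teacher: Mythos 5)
First, a point of order: the paper you were given does not prove this statement at all --- it is Theorem~5.1 of \cite{BCGM}, recalled verbatim in the appendix and used as a black box --- so your attempt is really a blind reconstruction of that reference's argument, and it has to be judged against it.

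The central step of your plan has a genuine gap. The kernel $Q(x,\mathrm{d}y)=\psi(y)\,M_\tau(x,\mathrm{d}y)/M_\tau\psi(x)$ is indeed Markov, and dividing (A1) by (A2) together with (A3) does give it a geometric drift with Lyapunov function $V/\psi$ and a Doeblin minorization on $K$. But since $\psi$ is not an eigenfunction of $M_\tau$, the iterates of $Q$ do not reproduce the normalized semigroup: $Q^2f=M_\tau\big(\psi\,M_\tau(\psi f)/M_\tau\psi\big)/M_\tau\psi\neq M_{2\tau}(\psi f)/M_{2\tau}\psi$ in general. Consequently Harris ergodicity of $Q$ (existence of $\mu_Q$, geometric convergence of $Q^n$) gives no control whatsoever on $M_{n\tau}$, and every subsequent step that leans on it --- the ``geometric Cauchy property'' of $M_{n\tau}\psi/\psi$, the construction $h=\lim_n e^{\lambda_0 n\tau}M_{n\tau}\psi$, the identification $\pi=\lim_n \nu M_{n\tau}/\nu M_{n\tau}\mathbf{1}$, and the positivity of $\lambda_0$ --- is unsupported. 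This is exactly the difficulty that forces \cite{BCGM} to work instead with a \emph{time-inhomogeneous} family of conservative kernels weighted by $m_j:=M_{j\tau}\psi$, of the form $f\mapsto M_\tau(f\,m_{N-k-1})/m_{N-k}$, whose products do telescope to $M_{N\tau}(\psi f)/M_{N\tau}\psi$; the Doeblin--Lyapunov contraction is then proved \emph{uniformly in the horizon} $N$, with (A4) used precisely to compare the weights $m_j$ on $K$ across horizons (the approach of \cite{CV20} reaches the same goal by conditioning on survival up to a later time). Your remaining ingredients --- extracting $\lambda_0$ from $\nu(M_{n\tau}\psi)$, the final $h$-transform $\widehat M_t$ once $h$ is in hand, and the interpolation from $n\tau$ to all $t$ via $M_sV\le C_sV$ --- are consistent with the actual argument, but they all sit downstream of uniform estimates that the homogeneous $\psi$-transform $Q$ cannot deliver.
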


\begin{hypothesis}[Condition~(G) (including Remark~2.2) of \cite{CV20}]
\label{hyp:CV20}
There exist positive real constants $\theta_1, \theta_2, c_1, c_2, c_3$, an integer $n_1\geq 1$, a function $\psi:\mathcal{X} \to \mathbb{R}_+$ and a probability measure $\nu$ on a measurable subset $K$ of $\mathcal{X}$ such that
\begin{itemize}
\item[(G1)] (Local Dobrushin coefficient). For all $x \in K$ and all measurable $A\subset K$,
\[
	P_{n_1}(V\,\1_A)(x)\geq c_1\,\nu(A)\,V(x).
\]
\item[(G2)] (Global Lyapunov criterion). We have $\theta_1<\theta_2$ and
\[
	\inf_{x\in K} \frac{\psi(x)}{V(x)}>0, \qquad \sup_{x\in \mathcal{X}} \frac{\psi(x)}{V(x)}\leq 1,
\]
\[
P_1 V(x) \leq \theta_1 \, V(x) + c_2\,\1_K(x)\,V(x),\, \quad \forall x\in \mathcal{X}
\]
\[
P_1 \psi(x) \geq \theta_2 \, \psi(x),\, \quad \forall x\in \mathcal{X}.
\]
\item[(G3)] (Local Harnack inequality). We have 
\[
	\sup_{n\in \mathbb{Z}_+}\frac{\sup_{y\in K}P_n \psi(y)/\psi(y)}{\inf_{y\in K}P_n \psi(y)/\psi(y)} \leq c_3
\]

\item[(G4)] (Aperiodicity). For all $x\in K$, there exists $n_4(x)$ such that for all $n\geq n_4(x)$,
\[
	P_n(\1_K\,V)(x)>0.
\]
\end{itemize}
\end{hypothesis}

\begin{theorem}[Corollary 2.4. of \cite{CV20}]
\label{th:CV20}
Assume that there exists $t_0>0$ such that $(P_{n})_{n\in\mathbb{N}}:=(M_{nt_0})_{n\in\mathbb{N}}$ satisfies Assumption~\ref{hyp:CV20},  $\left(\frac{M_tV}{V}\right)_{t\in[0,t_0]}$ is upper bounded by a constant $\bar c>0$ and  $\left(\frac{M_t\psi}{\psi}\right)_{t\in[0,t_0]}$ is lower bounded by a constant $\underline{c}>0$. Then there exist a positive measure $\nu_P$ on $\mathcal{X}$ such that $\nu_P(V)=1$ and $\nu_P(\psi)>0$, and some constants $C''>0$ and $\gamma>0$ such that, for all measurable functions $f:\mathcal{X}\to \mathbb{R}$ satisfying $|f|\leq V$ and all positive measure $\xi$ on $\mathcal{X}$ such that $\xi(V)<\infty$ and $\xi(\psi)>0$,
\[
  \left| \frac{\xi M_t f}{\xi M_t V} -\nu_P(f)\right| \leq C''\,e^{-\gamma\,t} \frac{\xi(V)}{\xi(\psi)}, \quad \forall t\geq 0.
\]
In addition, there exists $\lambda_0\in \mathbb{R}$ such that $\nu_P M_t = e^{\lambda_0\,t}\,\nu_P$ for all $t\geq 0$, and $e^{-\lambda_0\,t}\,M_t V$ converges uniformly and exponentially toward $\eta_P$ in $\mathcal{B}(V)$ when $t\to \infty$. 
Moreover, there exist some constants $C'''>0$ and $\gamma'>0$ such that, for all measurable functions $f:\mathcal{X}\to \mathbb{R}$ satisfying $|f|\leq V$ and all positive measures $\xi$ on $\mathcal{X}$ such that $\xi(V)<+\infty$,
\begin{align}
\label{eq:cor2.4.CV20.eta}
	\left| e^{-\lambda_0\,t}\,\xi M_t f-\xi(\eta_P)\,\nu_P(f) \right| \leq C'''\, e^{-\gamma'\,t}\, \xi(V), \quad \forall t\geq 0\,.
\end{align}
\end{theorem}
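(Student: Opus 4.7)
The plan is to reduce the statement to Theorem~\ref{th:main-qsd}, which already handles any starting distribution supported on $\mathbb{N}^*\times(0,\bar s_1)$ with finite $W_{\rho,p}$-moment. Two cases need separate treatment: (A) $s \geq \bar s_1$, where the process starts above the invariant region, and (B) $s = 0$, where the singularity $1/s$ in $W_{\rho,p}$ prevents direct application.

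For Case (A), I would introduce the hitting time $T_\epsilon := \inf\{t\geq 0 : S_t\leq \bar s_1-\epsilon\}$ and split $\mathbb{E}_{(x,s)}[f(X_t,S_t)] = \mathbb{E}_{(x,s)}[f(X_t,S_t)\mathbf{1}_{T_\epsilon\wedge \Text \leq t}] + \mathbb{E}_{(x,s)}[f(X_t,S_t)\mathbf{1}_{T_\epsilon\wedge \Text > t}]$. Lemma~\ref{lem:lyap2} supplies an exponential moment $\mathbb{E}_{(x,s)}[e^{(D+C)(T_\epsilon\wedge \Text)}]\leq A e^{\beta s}$; combined with $\lambda\leq D$ from \eqref{eq:eigenvalue.bounds} and Markov's inequality, $e^{\lambda t}$ times the second piece vanishes as $t\to\infty$. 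For the first piece, since $f(\cdot)\mathbf{1}_{X\neq 0}$ equals $f$ on the relevant event, the strong Markov property at $T_\epsilon$ gives an inner expectation starting from $(X_{T_\epsilon},S_{T_\epsilon})\in \mathbb{N}^*\times\{\bar s_1-\epsilon\}\subset \X$, to which \eqref{eq:cv_qsd_X_eq_h} applies. To justify dominated convergence I would bound the resulting integrand by $e^{(\lambda+\tilde\omega)T_\epsilon}W_{\rho,p}(X_{T_\epsilon},S_{T_\epsilon})$ for a sufficiently small $\tilde\omega>0$, then reuse the exponential moment of $T_\epsilon$ and a submartingale argument on a Lyapunov function similar to $V_0(x,s)=\rho^x e^{\alpha s}/\log\rho$. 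The limit then reads $\pi(f)\cdot\mathbb{E}_{(x,s)}[e^{\lambda T_\epsilon}h(X_{T_\epsilon},S_{T_\epsilon})\mathbf{1}_{T_\epsilon<\infty}]$; taking $f\equiv \mathbf{1}$ identifies this constant as the natural extension $h(x,s)\in(0,\infty)$, and the quotient yields the claimed conditional limit.

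For Case (B), I would use the Markov property at a small fixed time $t_0>0$: for $t'>t_0$,
\[
\mathbb{E}_{(x,0)}\!\left[f(X_{t'},S_{t'})\mid \Text>t'\right] = \mathbb{E}_\xi\!\left[f(X_{t'-t_0},S_{t'-t_0})\mid \Text>t'-t_0\right],
\]
where $\xi$ is the law of $(X_{t_0},S_{t_0})$ conditioned on $\{\Text>t_0\}$. Since $S_{t_0}\in(0,\phi(1,0,t_0)]\subset(0,\bar s_1)$ almost surely, $\xi$ is supported on $\X$, and it only remains to check $\xi(W_{\rho,p})<\infty$ for some admissible $(\rho,p)$, after which Theorem~\ref{th:main-qsd} (applied to $\xi$) concludes. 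The $\rho^{X_{t_0}}$ and $1/(\bar s_1-S_{t_0})^p$ terms are easy: the former by stochastic domination of $(X_u)_{u\leq t_0}$ by a Yule process with rate $\mu(\bar s_1)$ (a negative binomial at time $t_0$), for $\rho$ close enough to $1$; the latter because $S_{t_0}\leq \phi(1,0,t_0)$ is bounded away from $\bar s_1$.

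The main obstacle is the $1/S_{t_0}$ term, which requires a pathwise lower bound on $S_{t_0}$ started from $0$. I would proceed by coupling: on $[0,t_0]$ one has $S_u\leq D\Sin t_0$ deterministically, hence $\mu$ and $\mu'$ are bounded on the relevant range by constants $\bar\mu_{t_0},\bar\mu'_{t_0}$. A standard Poisson-measure coupling produces a Yule process $(Z_u)$ of rate $\bar\mu_{t_0}$ with $X_u\leq Z_u\leq Z_{t_0}$ for all $u\leq t_0$. Plugging this into \eqref{eq:substrate} gives $S'_u\geq D(\Sin-S_u) - k\bar\mu'_{t_0}S_u Z_{t_0}$, and a Grönwall argument yields
\[
S_{t_0}\geq \frac{D\Sin(1-e^{-Dt_0})}{D+k\bar\mu'_{t_0}Z_{t_0}}.
\]
Using $\mathbb{E}_{(x,0)}[Z_{t_0}]=xe^{\bar\mu_{t_0}t_0}$ gives $\mathbb{E}_{(x,0)}[1/S_{t_0}]<\infty$. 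Finally, since $(X_u)$ dominates a pure-death chain, $\mathbb{P}_{(x,0)}(\Text>t_0)\geq e^{-Dxt_0}>0$, so conditioning on non-extinction preserves integrability and $\xi(W_{\rho,p})<\infty$ as required. Remark~\ref{remark:mu.lipschitz} is handled by replacing $\bar\mu'_{t_0}$ with a local Lipschitz constant of $\mu$ near $0$, for $t_0$ small enough.
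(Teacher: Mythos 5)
Your proposal does not prove the statement in question. Theorem~\ref{th:CV20} is the abstract result of Champagnat--Villemonais (Corollary~2.4 of \cite{CV20}): for a general sub-Markovian semigroup $(M_t)$ on an abstract space $\mathcal{X}$ satisfying Assumption~\ref{hyp:CV20} (local Dobrushin minorization, the Lyapunov pair $(V,\psi)$, the local Harnack bound, aperiodicity), one must construct the measure $\nu_P$ with $\nu_P(V)=1$, the eigenvalue $\lambda_0$ and the eigenfunction $\eta_P$, and establish the exponential contraction of $\xi M_t f/\xi M_t V$ toward $\nu_P(f)$ with the explicit error $C''e^{-\gamma t}\,\xi(V)/\xi(\psi)$, together with \eqref{eq:cor2.4.CV20.eta}. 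Your argument never engages with any of these objects or hypotheses: it nowhere uses (G1)--(G4), never constructs $\nu_P$ or $\eta_P$, and never bounds the quantity appearing in the conclusion. What you have written is instead a proof of Corollary~\ref{co:yaglom} for the Crump--Young model (extension of the Yaglom limit to initial conditions with $s\geq\bar s_1$ and $s=0$, via Lemma~\ref{lem:lyap2}, the hitting time $T_\epsilon$, and the Yule-process coupling controlling $\mathbb{E}_{(x,0)}[1/S_{t_0}]$) --- essentially the paper's own Section~\ref{sec:proof.corollary}, but for a different statement than the one asked.

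Beyond the mismatch, the strategy is also circular with respect to the paper's logical structure: Theorem~\ref{th:main-qsd}, which you propose to ``reduce the statement to,'' is itself deduced in Section~\ref{sec:sufficient.conditions} from Theorem~\ref{th:CV20} (together with Theorem~\ref{th:BCGM}), so it cannot be used as an ingredient in a proof of Theorem~\ref{th:CV20}. A genuine proof would have to work at the level of the abstract semigroup, e.g.\ by running the weighted Doeblin/Harris-type contraction argument of \cite{CV20} for the auxiliary conservative kernels built from $M_{t_0}$, $V$ and $\psi$, which is an entirely different kind of argument from the model-specific hitting-time and coupling estimates you develop.
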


\section*{Acknowledgements}
This work was partially supported by the Chaire ``Mod\'elisation Math\'ematique et Biodiversit\'e'' of VEOLIA Environment, \'Ecole Polytechnique, Mus\'eum National d'Histoire Naturelle and Fondation X and by the ANR project NOLO,  ANR-20-CE40-0015,  funded by the French Ministry of Research.

\bibliographystyle{alpha}

\newcommand{\etalchar}[1]{$^{#1}$}

\end{document}